\newcommand{\bburl}[1]{\textcolor{blue}{\url{#1}}}
\newtheorem{thm}{Theorem}[section]
\newtheorem{cor}[thm]{Corollary}
\newtheorem{lem}[thm]{Lemma}
\newtheorem{prop}[thm]{Proposition}
\newtheorem{defi}[thm]{Definition}
\newtheorem{rek}[thm]{Remark}
\DeclareMathOperator{\supp}{supp}
\DeclareMathOperator{\spann}{span}
\DeclareMathOperator{\sgn}{sgn}
\numberwithin{equation}{section}
\DeclareFontFamily{U}{mathx}{}
\DeclareFontShape{U}{mathx}{m}{n}{<-> mathx10}{}
\DeclareSymbolFont{mathx}{U}{mathx}{m}{n}
\DeclareMathAccent{\widehat}{0}{mathx}{"70}
\DeclareMathAccent{\widecheck}{0}{mathx}{"71}
\begin{document}

\title[The TGA and $f$-bounded approximations]{The Thresholding Greedy Algorithm versus Approximations with Sizes Bounded by Certain Functions $f$}

\author{H\`ung Vi\d{\^e}t Chu}

\email{\textcolor{blue}{\href{mailto:hungchu2@illinois.edu}{hungchu2@illinois.edu}}}
\address{Department of Mathematics, University of Illinois at Urbana-Champaign, Urbana, IL 61820, USA}

\begin{abstract}
Let $X$ be a Banach space and $(e_n)_{n=1}^\infty$ be a basis. For a function $f$ in a large collection $\mathcal{F}$ 
(closed under composition), we define and characterize $f$-greedy and $f$-almost greedy bases. 
We study relations among these bases as $f$ varies and show that while a basis is not almost greedy, it can be $f$-greedy for some $f\in \mathcal{F}$. Furthermore, we prove that for all non-identity function $f\in \mathcal{F}$, we have the surprising equivalence
$$f\mbox{-greedy}\ \Longleftrightarrow \ f\mbox{-almost greedy}.$$
We give various examples of Banach spaces to illustrate our results. 
\end{abstract}

\subjclass[2020]{41A65; 46B15}

\keywords{Thresholding Greedy Algorithm; almost greedy; bases}

\thanks{The author is thankful to Timur Oikhberg for helpful feedback on earlier drafts of this paper and for showing the author that for certain functions $f$, a quasi-greedy and $f$-disjoint superdemocratic basis is $f$-unconditional.}

\maketitle

\tableofcontents

\section{Introduction}

\subsection{Background and motivation}

Let $X$ be an infinite-dimensional Banach space (with dual $X^*$) over the field $\mathbb{F} = \{\mathbb{R}, \mathbb{C}\}$. In this paper, we work with a general basis, which is a countable collection of vectors $(e_n)_{n=1}^\infty\subset X$ such that: i) $\overline{\spann(e_n)_{n=1}^\infty} = X$; ii) there exists a collection $(e_n^*)_{n=1}^\infty\subset X^*$, called the biorthogonal functionals, that satisfy $e^*_n(e_m) = \delta_{n, m}$; iii) $\sup_{n} \|e_n\|\|e_n^*\|_{*} < \infty$ and semi-normalization: $$0 \ <\ \inf_{n} \|e_n\|\ \leqslant\ \sup_{n}\|e_n\|\ <\ \infty.$$ 
M-boundedness and semi-normalization imply the existence of $\mathbf c_1, \mathbf c_2 > 0$ such that 
$$0 \ <\ \mathbf c_1 \ := \ \inf_n\{\|e_n\|, \|e_n^*\|\}\ \leqslant \ \sup_n\{\|e_n\|, \|e_n^*\|\}\ := \ \mathbf c_2 \ <\ \infty.$$ One important goal of approximation theory is to efficiently approximate each vector $x\in X$ by a finite linear combination of vectors in a given basis. In 1999, Konyagin and Temlyakov \cite{KT1} introduced a natural, order-free method of approximation, called the Thresholding Greedy Algorithm (TGA). The algorithm chooses the largest coefficients (in modulus) of each vector $x$ in its representation with respect to a basis $(e_n)_{n=1}^{\infty}$. Formally, a set $A\subset\mathbb{N}$ is a greedy set of a vector $x$ of order $m$ if 
$$|A| \ =\ m\mbox{ and }\min_{n\in A}|e_n^*(x)| \ \geqslant\ \max_{n\notin A}|e_n^*(x)|.$$
Let $G(x, m)$ denote the set of all greedy sets of $x$ of order $m$. For each $x\in X$, the TGA thus produces a sequence of approximants $(\sum_{n\in \Lambda_m}e_n^*(x)e_n)_{m=1}^\infty$, where $\Lambda_m\in G(x,m)$.

Konyagin and Temlyakov \cite{KT1} defined greedy and quasi-greedy bases. For a finite set $A\subset\mathbb{N}$, let $P_{A}(x) = \sum_{n\in A}e_n^*(x)e_n$ and $P_{A^c}(x) = x - P_A(x)$. Also, let $\mathbb{N}^{<\infty}$ denote the collection of all finite subsets of $\mathbb{N}$.
\begin{defi}\normalfont
A basis is said to be greedy if there exists a constant $\mathbf C\geqslant 1$ such that
$$\|x-P_{\Lambda}(x)\| \ \leqslant\ \mathbf C\sigma_m(x),\forall x\in X, \forall m\in \mathbb{N}, \forall \Lambda\in G(x, m),$$
where $\sigma_m(x) = \inf_{\substack{|A|\leqslant m\\ (a_n)\subset \mathbb F}}\|x - \sum_{n\in A}a_n e_n\|$.
\end{defi}

\begin{defi}\normalfont
A basis $(e_n)_{n=1}^\infty$ is said to be quasi-greedy\footnote{This definition of the quasi-greedy property is equivalent to the convergence $$P_{\Lambda_m}(x)\ \longrightarrow\ x, \forall x\in X, \forall \Lambda_m\in G(x, m).$$ For details, see \cite[Theorem 4.1]{AABW}.} if there exists a constant $\mathbf C> 0$ such that
\begin{equation}\label{e3}\|x-P_{\Lambda}(x)\|\ \leqslant\ \mathbf C\|x\|, \forall x\in X, \forall m\in\mathbb{N}, \forall \Lambda\in G(x, m).\end{equation}
In this case, we let $\mathbf C_\ell$ denote the smallest constant $\mathbf C$ for \eqref{e3} to hold, and we say that 
$(e_n)_{n=1}^\infty$ is $\mathbf C_\ell$-suppression quasi-greedy. Also, let $\mathbf C_q$ be the smallest constant such that 
$$\|P_{\Lambda}(x)\|\ \leqslant\ \mathbf C_q\|x\|, \forall x\in X, \forall m\in\mathbb{N}, \forall \Lambda\in G(x, m).$$
We say that $(e_n)_{n=1}^\infty$ is $\mathbf C_q$-quasi-greedy. 
\end{defi}

A beautiful characterization of greedy bases involves unconditionality (Definition \ref{de1}) and democracy (Definition \ref{de2}).

\begin{thm}[Konyagin and Temlyakov \cite{KT1}]\label{ktc}
A basis is greedy if and only if it is unconditional and democratic. 
\end{thm}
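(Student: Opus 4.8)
The plan is to establish the two implications separately, doing the easy ("sufficiency") direction by a direct estimate and then splitting the hard direction into three pieces: quasi-greediness, democracy, and (the real obstacle) unconditionality.

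\emph{Sufficiency: unconditional $+$ democratic $\Rightarrow$ greedy.} Let $K$ be the suppression-unconditional constant. Fix $x\in X$, $m\in\mathbb N$, a greedy set $\Lambda\in G(x,m)$, and an arbitrary $m$-term vector $y=\sum_{n\in B}a_ne_n$ with $|B|=m$ (pad $B$ with indices carrying zero coefficients if the near-best approximant has fewer than $m$ terms). Since $x$ and $x-y$ agree off $B$, and $\Lambda^c=(\Lambda\cup B)^c\sqcup(B\setminus\Lambda)$,
\[
\|x-P_\Lambda(x)\|\ \leqslant\ \|P_{(\Lambda\cup B)^c}(x)\|+\|P_{B\setminus\Lambda}(x)\|\ =\ \|P_{(\Lambda\cup B)^c}(x-y)\|+\|P_{B\setminus\Lambda}(x)\|,
\]
and the first summand is $\leqslant K\|x-y\|$. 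For the second, note $|B\setminus\Lambda|=|\Lambda\setminus B|=m-|\Lambda\cap B|$, and since $\Lambda\in G(x,m)$ every coefficient of $x$ on $B\setminus\Lambda$ has modulus at most $\min_{n\in\Lambda}|e_n^*(x)|$, hence at most that of every coefficient on $\Lambda\setminus B$. A short comparison estimate — replace the coefficients on $B\setminus\Lambda$ by a common value using coefficient-domination (a consequence of unconditionality), pass to signed indicator vectors of equal cardinality via (super)democracy, then inflate back on $\Lambda\setminus B$ — gives $\|P_{B\setminus\Lambda}(x)\|\leqslant C'\|P_{\Lambda\setminus B}(x)\|=C'\|P_{\Lambda\setminus B}(x-y)\|\leqslant C'K\|x-y\|$, where $C'$ depends only on the unconditional and democracy constants. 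Thus $\|x-P_\Lambda(x)\|\leqslant(1+C')K\|x-y\|$, and taking the infimum over $y$ finishes this direction.

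\emph{Necessity, easy parts.} Assume the basis is greedy with constant $\mathbf C$. Quasi-greediness is immediate: $\sigma_m(x)\leqslant\|x\|$ (use the zero approximant), so $\|x-P_\Lambda(x)\|\leqslant\mathbf C\|x\|$. For democracy, exploit that when coefficients tie the TGA may return an unfavorable greedy set: given disjoint finite $A,B$ with $|A|=|B|$, let $x=\sum_{n\in A}e_n+\sum_{n\in B}e_n$; every nonzero coefficient equals $1$, so $B$ is a legitimate greedy set of $x$ of order $|B|$, whence $\|\sum_{n\in A}e_n\|=\|x-P_B(x)\|\leqslant\mathbf C\,\sigma_{|B|}(x)\leqslant\mathbf C\|x-\sum_{n\in A}e_n\|=\mathbf C\|\sum_{n\in B}e_n\|$. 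Symmetry plus routine reductions (unequal sizes, overlapping sets, and signed indicators in the complex case, the last handled most easily once unconditionality is available) yield democracy in full.

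\emph{Necessity, the main obstacle: greedy $\Rightarrow$ unconditional.} It suffices to produce a constant $K$ with $\|P_{A^c}(x)\|\leqslant K\|x\|$ for all finite $A$ and finitely supported $x$ (then $\|P_A(x)\|\leqslant(1+K)\|x\|$). When $A$ is a level set $\{n:|e_n^*(x)|\geqslant t\}$ it is a greedy set and the bound is just quasi-greediness; the difficulty is a general $A$. The route I would take: with $m=|A\cap\supp(x)|$, introduce a block $D$ of $m$ fresh indices carrying a common coefficient $M>\max_n|e_n^*(x)|$ (so $M\leqslant2\mathbf c_2\|x\|$ suffices), and set $z:=P_A(x)+\sum_{n\in D}Me_n$; then $D\in G(z,m)$ with $z-P_D(z)=P_A(x)$, so $\|P_A(x)\|\leqslant\mathbf C\,\sigma_m(z)$, and one bounds $\sigma_m(z)$ by a competitor keeping the largest coordinates of $P_A(x)$ together with the remaining part of $D$, controlling the residual via the democracy estimate and the constants $\mathbf c_1,\mathbf c_2$. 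I expect the crux to be making this genuinely non-circular and uniform: a naive competitor merely reproduces $\|P_A(x)\|\leqslant\mathbf C\|P_A(x)\|$, and when the coefficients of $P_A(x)$ spread over many scales one must carefully balance a "tail" term against an "indicator" term so that the final constant depends only on $\mathbf c_1,\mathbf c_2,\mathbf C$ and the democracy constant, and not on $|\supp(x)|$.
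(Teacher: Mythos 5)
Your sufficiency direction and the two easy necessity steps (quasi-greediness and democracy) are correct and are essentially the classical Konyagin--Temlyakov arguments; the splitting $x-P_\Lambda(x)=\bigl(x-P_{\Lambda\cup B}(x)\bigr)+P_{B\setminus\Lambda}(x)$ and the comparison of $P_{B\setminus\Lambda}(x)$ with $P_{\Lambda\setminus B}(x)$ via coefficient domination, superdemocracy on sets of equal cardinality, and the lower UL estimate all go through. Note that the paper does not reprove Theorem \ref{ktc} directly: it cites \cite{KT1} and remarks that the statement is recovered as the case where $f$ is the identity in Theorem \ref{mm2} combined with Proposition \ref{pe2}, since for that choice $f$-unconditionality coincides with unconditionality and Property (A, $f$) reduces (for quasi-greedy bases) to democracy.

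The genuine gap is the step greedy $\Rightarrow$ unconditional, which you yourself flag as the crux, and your proposed route does not close. With $z=P_A(x)+M1_D$ and $D$ fresh, any $m$-term competitor of the form $w=P_{A_1}(x)+M1_{D_1}$ leaves the residual $P_{A\setminus A_1}(x)+M1_{D\setminus D_1}$; bounding $\|P_{A\setminus A_1}(x)\|$ by a multiple of $\|x\|$ is an instance of the very inequality being proved, while $M\|1_{D\setminus D_1}\|\lesssim\|x\|$ forces $|D\setminus D_1|=O(1)$, and no choice of $A_1, D_1$ satisfies both constraints when the coefficients of $P_A(x)$ spread over many scales. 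The repair is to abandon fresh indices and perturb $x$ on $A$ itself: take $\alpha>2\max_n|e_n^*(x)|$ and set $y=x+\alpha 1_A$, so that $A\in G(y,|A|)$ and $y-P_A(y)=x-P_A(x)$. The $|A|$-term competitor $\alpha 1_A$ then yields
\[
\|x-P_A(x)\|\ =\ \|y-P_A(y)\|\ \leqslant\ \mathbf C\,\sigma_{|A|}(y)\ \leqslant\ \mathbf C\,\|y-\alpha 1_A\|\ =\ \mathbf C\,\|x\|,
\]
giving suppression unconditionality with the same greedy constant and with no appeal to democracy or to any multi-scale balancing. This is exactly the perturbation used in the paper's proof of Theorem \ref{mm2} item i) (there $y=x+\alpha 1_E$ for an $f$-greedy decomposition $(E,F)$ of $A$), specialized to the identity function, for which every finite set $A$ is an $f$-greedy set with $E=A$ and $F=\emptyset$.
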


Observe that in the definition of greedy bases, we compare the TGA, which involves projections onto greedy sets, against approximations with arbitrary coefficients. Hence, it is also natural to compare the TGA against projections only (without arbitrary coefficients). This thinking leads us to the so-called almost greedy bases, introduced by Dilworth et al. \cite{DKKT}.

\begin{defi}\normalfont
A basis is said to be almost greedy if there exists a constant $\mathbf C\geqslant 1$ such that
$$\|x-P_{\Lambda}(x)\| \ \leqslant\ \mathbf C\widetilde{\sigma}_m(x),\forall x\in X, \forall m\in \mathbb{N}, \forall \Lambda\in G(x, m),$$
where $\widetilde{\sigma}_m(x) = \inf_{|A| \leqslant m}\|x - P_A(x)\|$.
\end{defi}

It is worth noting that the requirement $``|A| \leqslant m"$ in the definition of $\widetilde{\sigma}_m(x)$ can be replaced by $``|A|= m"$, and the basis is still almost greedy with the same constant $\mathbf C$. While greedy bases are almost greedy, the converse does not necessarily hold. A famous example of an almost greedy basis that is not greedy is due to Konyagin and Temlyakov \cite{KT1} (also see \cite[Example 10.2.9]{AK}). Dilworth et al. \cite{DKKT} gave the following nice and surprising characterization of almost greedy bases. 

\begin{thm}\cite[Theorem 3.3]{DKKT}\label{dkktc}
Let $(e_n)_{n=1}^\infty$ be a basis of a Banach space. The following are equivalent
\begin{enumerate}
    \item [i)] $(e_n)_{n=1}^\infty$ is almost greedy.
    \item [ii)] $(e_n)_{n=1}^\infty$ is quasi-greedy and democratic.
    \item [iii)] For any (respectively, every) $\lambda > 1$, there is a constant $\mathbf C_\lambda > 0$ such that
    $$\|x-P_{\Lambda}(x)\| \ \leqslant\ \mathbf C_\lambda \sigma_m(x), \forall x\in X, \forall m\in\mathbb{N}, \forall \Lambda\in G(x, \lceil\lambda m\rceil).$$
\end{enumerate}
\end{thm}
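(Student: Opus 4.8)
The plan is to run the cycle $(i)\Rightarrow(ii)\Rightarrow(iii)\Rightarrow(i)$, where I would show that $(ii)$ implies $(iii)$ in its ``for every $\lambda>1$'' form and that $(iii)$ in its ``for some $\lambda>1$'' form implies $(i)$. Everything rests on two preliminary facts about a $\mathbf C_q$-quasi-greedy basis. First, a \emph{threshold set is a greedy set}: for any $y$ and $\alpha>0$ the finite set $\{n:|e_n^*(y)|>\alpha\}$ has all its coefficients above $\alpha$ and its complement at most $\alpha$, so it lies in $G(y,\cdot)$; hence $\|y-P_{\{|e_n^*(y)|>\alpha\}}(y)\|\le\mathbf C_\ell\|y\|$, and if all nonzero coefficients of $y$ have equal modulus then \emph{every} subset of $\supp y$ is a greedy set of $y$. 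Second, the \emph{truncation lemma}: writing $\mathbf 1_A:=\sum_{n\in A}e_n$ and $\mathbf 1_{\varepsilon A}:=\sum_{n\in A}\varepsilon_n e_n$, if $A=\{n:|e_n^*(y)|\ge\alpha\}$ and $\varepsilon_n=\sgn(e_n^*(y))$ then $\alpha\|\mathbf 1_{\varepsilon A}\|\le 2\mathbf C_q\|y\|$; I would prove this by ordering $A$ so that $|e_n^*(y)|$ is non-increasing, writing $\mathbf 1_{\varepsilon A}=\sum_i|e_{n_i}^*(y)|^{-1}(S_i-S_{i-1})$ with $S_i$ the successive greedy projections of $y$ (each of norm $\le\mathbf C_q\|y\|$), summing by parts, and telescoping the monotone coefficient differences. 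Combining these with the invariance of $\mathbf C_q$-quasi-greediness under $e_n\mapsto\eta_ne_n$ ($|\eta_n|=1$) and, for real scalars, the fact that $(b_n)$ with $|b_n|\le\alpha$ lies in $\alpha\cdot(\text{convex hull of the sign patterns})$ (an extra bounded constant for complex scalars), I would extract unconditionality for constant coefficients, $\|\mathbf 1_{\varepsilon B}\|\le 2\mathbf C_q\|\mathbf 1_B\|$, and hence $\|\sum_{n\in B}b_ne_n\|\le C_0\max_n|b_n|\,\|\mathbf 1_B\|$.

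For $(i)\Rightarrow(ii)$: quasi-greediness is the case $A=\emptyset$ of $\widetilde\sigma_m$; democracy (with constant $\mathbf\Delta$, say) follows by testing the almost-greedy inequality on $x=\mathbf 1_A+(1+\delta)\mathbf 1_B$ for disjoint $A,B$ with $|A|=|B|$, whose unique greedy set of order $|B|$ is $B$, giving $\|\mathbf 1_A\|=\|x-P_B(x)\|\le\mathbf C\widetilde\sigma_{|B|}(x)\le(1+\delta)\mathbf C\|\mathbf 1_B\|$ and letting $\delta\downarrow0$ (the general $|A|\le|B|$ reduces to this via the ``subsets are greedy'' fact). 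For $(ii)\Rightarrow(i)$: fix $x$, $m$, $\Lambda\in G(x,m)$, pick $A$ with $|A|=m$ (pad with negligible coefficients) and $\|x-P_A(x)\|\le(1+\delta)\widetilde\sigma_m(x)$, put $t=\min_{n\in\Lambda}|e_n^*(x)|$, and split $x-P_\Lambda(x)=P_{(\Lambda\cup A)^c}(x)+P_{A\setminus\Lambda}(x)$. The first term is $(x-P_A(x))-P_{\Lambda\setminus A}(x-P_A(x))$ and $\Lambda\setminus A$ is a greedy set of $P_{A^c}(x)=x-P_A(x)$, so it has norm $\le\mathbf C_\ell(1+\delta)\widetilde\sigma_m(x)$; for the second, $|A\setminus\Lambda|=|\Lambda\setminus A|$ and the coefficients of $x$ on $A\setminus\Lambda$ are $\le t$, so the constant-coefficient estimate and democracy give $\|P_{A\setminus\Lambda}(x)\|\le C_0\mathbf\Delta\,t\|\mathbf 1_{\Lambda\setminus A}\|$, and the truncation lemma applied to $P_{A^c}(x)$ (plus unimodular invariance) bounds $t\|\mathbf 1_{\Lambda\setminus A}\|$ by a constant times $\|P_{A^c}(x)\|\le(1+\delta)\widetilde\sigma_m(x)$. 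Letting $\delta\downarrow0$ yields $(i)$.

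For $(ii)\Rightarrow(iii)$: fix $\lambda>1$, $x$, $m$, $\Lambda\in G(x,\lceil\lambda m\rceil)$, and a near-optimal $z=\sum_{n\in B}a_ne_n$ with $|B|\le m$, $w=x-z$, $\|w\|\le(1+\delta)\sigma_m(x)$, $t=\min_{n\in\Lambda}|e_n^*(x)|$; split $x-P_\Lambda(x)=P_{(\Lambda\cup B)^c}(x)+P_{B\setminus\Lambda}(x)$. The inflated order enters through a \emph{size-ratio} form of democracy: $|B\setminus\Lambda|\le m$ while $|\Lambda\setminus B|\ge\lceil\lambda m\rceil-m$, and $\lambda m\le\lceil\lambda m\rceil$ gives $|B\setminus\Lambda|\le\lceil 1/(\lambda-1)\rceil|\Lambda\setminus B|$, so $\|\mathbf 1_{B\setminus\Lambda}\|\le\lceil 1/(\lambda-1)\rceil\mathbf\Delta\|\mathbf 1_{\Lambda\setminus B}\|$; since the coefficients of $x$ on $B\setminus\Lambda$ are $\le t$ and $\Lambda\setminus B\subseteq B^c$ forces $|e_n^*(w)|=|e_n^*(x)|\ge t$ there, the constant-coefficient estimate and the truncation lemma \emph{applied to $w$} give $\|P_{B\setminus\Lambda}(x)\|\le C_\lambda\|w\|$. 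For the tail, $(\Lambda\cup B)^c\subseteq B^c$ so $P_{(\Lambda\cup B)^c}(x)=P_{(\Lambda\cup B)^c}(w)$, and $(\Lambda\cup B)^c\subseteq G_0^c$ where $G_0=\{n:|e_n^*(w)|>t\}$ is a threshold, hence greedy, set of $w$; thus $P_{(\Lambda\cup B)^c}(w)=P_{G_0^c}(w)-P_{G_0^c\cap(\Lambda\cup B)}(w)$ with $\|P_{G_0^c}(w)\|\le\mathbf C_\ell\|w\|$, while $G_0^c\cap(\Lambda\cup B)$ has at most $\lceil\lambda m\rceil+m$ coefficients, all of modulus $\le t$, so the same three tools (constant-coefficient estimate, size-ratio democracy, truncation for $w$) bound the remainder by a constant times $\|w\|$. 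Letting $\delta\downarrow0$ gives the $\lambda$-inequality. Finally $(iii)$ for one fixed $\lambda$ implies quasi-greediness ($A=\emptyset$ in $\sigma_m$; for a greedy set $\Gamma$ of order $j$, extend to $\Lambda\in G(x,\lceil\lambda m\rceil)$ with $\lceil\lambda m\rceil\ge j$, and $\|P_\Lambda(x)-P_\Gamma(x)\|\le 2\mathbf C_q\|x\|$ as a difference of greedy projections) and, by a test-vector argument as in $(i)\Rightarrow(ii)$ with $\sigma$ in place of $\widetilde\sigma$ and size-ratio democracy, democracy; with $(i)\Leftrightarrow(ii)$ the cycle closes.

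I expect the work to concentrate in two places. The first is the truncation lemma: its Abel-summation proof is short, but it is the only step that upgrades quasi-greediness — a statement about greedy projections — into control of arbitrary constant-coefficient sums, and every ``small-coefficient'' estimate above depends on it. The second is the tail estimate in $(ii)\Rightarrow(iii)$: because the near-optimal $z$ has completely uncontrolled coefficients $a_n$, one cannot relate $B$ to $\Lambda$ directly, and the device that rescues the argument is to discard not a projection of $x$ but the threshold set $G_0=\{|e_n^*(w)|>t\}$ of the \emph{residual} $w$, which is automatically a greedy set of $w$, and then absorb the $\le|\Lambda|+|B|$ leftover coordinates — all of small size — through democracy in its size-ratio form; this is precisely why the comparison of $\lceil\lambda m\rceil$-greedy sets against $\sigma_m$, rather than of $m$-greedy sets against $\sigma_m$, is forced and why $\mathbf C_\lambda$ must blow up as $\lambda\downarrow1$. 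Making these cardinality comparisons and constant-tracking precise is the bulk of the routine labor; everything else is either soft or a direct consequence of the two preliminary lemmas.
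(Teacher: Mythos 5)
This statement is quoted in the paper as an external result (\cite[Theorem 3.3]{DKKT}) and is given no proof there, so there is no internal argument to compare against; your proposal is, in effect, a reconstruction of the original Dilworth--Kalton--Kutzarova--Temlyakov proof, and its overall architecture (the cycle $(i)\Leftrightarrow(ii)$, $(ii)\Rightarrow(iii)$ for every $\lambda$, $(iii)$ for one $\lambda\Rightarrow(ii)$, with the truncation/Abel-summation lemma and the UL-type estimate doing the heavy lifting, and with the threshold set $G_0$ of the residual $w$ rescuing the tail estimate) is the standard and correct one. One step needs repair as written: in extracting quasi-greediness from $(iii)$ you extend a greedy set $\Gamma$ of order $j$ to $\Lambda\in G(x,\lceil\lambda m\rceil)$ and bound $\|P_\Lambda(x)-P_\Gamma(x)\|\leqslant 2\mathbf C_q\|x\|$ ``as a difference of greedy projections'' --- but $\mathbf C_q$ is exactly the constant you are trying to produce, and $P_{\Lambda\setminus\Gamma}(x)$ is not a greedy projection, so this is circular. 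The fix is immediate: choose $m$ minimal with $\lceil\lambda m\rceil\geqslant j$, so that $|\Lambda\setminus\Gamma|\leqslant\lceil\lambda m\rceil-j\leqslant\lambda+1$ is bounded and $\|P_{\Lambda\setminus\Gamma}(x)\|\leqslant(\lambda+1)\mathbf c_2^2\|x\|$ by M-boundedness alone (and for $j<\lceil\lambda\rceil$ the projection has boundedly many terms anyway). Similarly, the democracy-from-$(iii)$ step is compressed: the test vector $1_A+(1+\delta)1_D$ only yields $\|1_A\|\lesssim\|1_D\|$ when $|D|=\lceil\lambda|A|\rceil$, so the regime $|A|\leqslant|B|<\lceil\lambda|A|\rceil$ must be handled by splitting $A$ into $O(\lambda)$ pieces and invoking the already-established quasi-greediness --- your ``size-ratio democracy'' remark covers this, but the order of deductions (quasi-greediness first, then democracy) matters and should be made explicit. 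With these two repairs the argument is complete and consistent with the source the paper cites.
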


The equivalence between i) and ii) shares the same spirit as Theorem \ref{ktc}, while the equivalence between i) and iii) shows that enlarging the greedy sums in the definition of greedy bases brings us to the strictly larger realm of almost greedy bases. In other words, for almost greedy bases, an approximation using greedy sums of size $\lceil \lambda m\rceil$ gives us essentially the smallest error term from using an arbitrary $m$-term approximation. 

Besides greedy and almost greedy bases, two other notable greedy-type bases are the so-called partially greedy (PG) (introduced by Dilworth et al. \cite{DKKT}) and reverse partially greedy (RPG) bases (introduced by Dilworth and Khurana \cite{DK}). They are not the focus of this paper; however, we would like to make a few remarks about these bases. Unlike (almost) greedy bases, the notion of both PG and RPG bases depends on the ordering of the basis. For example, in the definition of partially greedy bases, we compare the efficiency of the TGA against partial sums, the more convenient method of approximation. Recently, some authors expressed their concerns about the notion of partially greedy bases as it is order-dependent, while the TGA is order-free \cite[page 6]{AABBL}. While that is a well-founded concern, one can also argue that partially greedy bases are crucial because they give us information about how the TGA performs against the most convenient method of approximation, namely partial summations. In other words, there is no point in choosing the largest coefficients for our approximation if we cannot outperform partial summations. 

\subsection{Motivation}

Inspired by Theorem \ref{dkktc}, the author of the present paper \cite{C1, C2} investigated what happens if we enlarge the greedy sums (by a constant factor $\lambda > 1$) in the definition of almost greedy, PG, and RPG bases. Surprisingly, while doing so gives new, weaker greedy-type bases in the case of PG and RPG bases (see \cite[Theorem 1.5]{C1} and \cite[Theorem 1.4]{C2}), \cite[Theorem 5.4]{C1} states that we do not obtain a weaker greedy-type basis in the case of almost greedy bases. 
To put it another way, if the sizes of greedy sums and of the projective approximation terms in the definition of almost greedy bases are of the same order, i.e., $\lceil\lambda m\rceil$ and $m$, respectively, then we obtain only equivalences of being almost greedy. However, we suspect that such an equivalence no longer holds if the sizes are of different order, say $m$ and $\log m$, for example. This paper examines what happens when the sizes are of different order. 

Our main results not only give new greedy-type bases and an unexpected surprising equivalence among these bases but also provide a partial solution to the above-mentioned concern of some researchers regarding the order-dependent notion of PG and RPG bases. In particular, while some PG and PRG greedy bases are not almost greedy, they may satisfy a certain order-free condition, which involves resizing projective approximations in the definition of (almost) greedy bases.

\subsection{Main results}
Throughout this paper, let $f:\mathbb{R}_{\geqslant 0}\rightarrow \mathbb{R}_{\geqslant 0}$ satisfy 
\begin{enumerate}
    \item $f(0) = 0$, $f(1)\leqslant 1$, and $f(x) > 0$ for all $x > 0$,
    \item $f$ is continuous, increasing, and concave on $[0,\infty)$, and 
    \item $f$ is differentiable on $(0, \infty)$.
\end{enumerate}
Let $\mathcal{F}$ denote the set of all such functions. Note that $\mathcal{F}$ contains functions like $f(x) = cx^{\gamma}$ for $c, \gamma\in (0,1]$. Also, $\mathcal{F}$ is closed under composition.

\begin{defi}\normalfont
A basis $(e_n)_{n=1}^\infty$ is said to be $f$-almost greedy if there exists a constant $\mathbf C > 0$ such that 
\begin{equation}\label{e11}\|x-P_\Lambda(x)\|\ \leqslant\ \mathbf C\widetilde{\sigma}_{f(m)}(x),\forall x\in X, \forall m\in\mathbb{N},\forall \Lambda\in G(x, m).\end{equation}
The least $\mathbf C$ is denoted by $\mathbf C_{a, f}$. 
\end{defi}

\begin{defi}\normalfont
A basis $(e_n)_{n=1}^\infty$ is said to be $f$-greedy if there exists a constant $\mathbf C>0$ such that 
\begin{equation}\label{e30}\|x-P_{\Lambda}(x)\|\ \leqslant\ \mathbf C\sigma_{f(m)}(x),\forall x\in X, \forall m\in\mathbb{N},\forall \Lambda\in G(x,m).\end{equation}
The least such $\mathbf C$ is denoted by $\mathbf C_{g, f}$.
\end{defi}

At the first glance, these definitions only make sense for functions $f$ satisfying $f(m) < m + 1$ for all $m\in\mathbb{N}$. Otherwise, even the canonical basis of $c_0$ does not satisfy \eqref{e11}. For example, suppose that $f(3) = 4$ and consider the vector $x = (1, 1, 1, 1, 0, \ldots)$. Let $\Lambda = \{1, 2, 3\}$ be a greedy set of $x$. While $\|x-P_{\Lambda}(x)\| = 1$, $\widetilde{\sigma}_{f(3)}(x) = 0$. Therefore, there is no $\mathbf C$ for \eqref{e11} to hold. However, as we prove later, if $f\in \mathcal{F}$, then $f(m)\leqslant m$ for all $m\in\mathbb{N}$.

The main goal of this paper is to establish the following unexpected result.
\begin{thm}\label{m3}
Let $f$ be a non-identity function in $\mathcal{F}$. Then a basis is $f$-almost greedy if and only if it is $f$-greedy. 
\end{thm}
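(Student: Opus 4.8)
The plan is to prove the nontrivial direction (f-almost greedy $\Rightarrow$ f-greedy) by showing that an $f$-almost greedy basis is quasi-greedy and democratic, hence almost greedy by Theorem~\ref{dkktc}, and then \emph{upgrading} the $\sigma$-estimate using the crucial structural fact that $f\in\mathcal F$ is non-identity, which (by concavity) forces $f(m)/m\to 0$, so $f(m)\leqslant m/2$ eventually, i.e. $f(m)$ is eventually \emph{much smaller} than $m$. The first step is routine: taking $m=|\Lambda|$ small (say $\Lambda\in G(x,1)$, where $f(1)\leqslant 1$ forces $\widetilde\sigma_{f(1)}(x)\leqslant\widetilde\sigma_0(x)=\|x\|$ when $f(1)<1$, or directly $f(1)=1$) one extracts quasi-greediness; democracy should follow by testing \eqref{e11} on normalized sums $\mathbf 1_A=\sum_{n\in A}e_n$ of comparable finite sets and comparing with $\widetilde\sigma$ of a competitor supported on a set of size $f(m)$, exactly as in the classical arguments of \cite{DKKT}. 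So after this preliminary reduction we know $(e_n)$ is almost greedy, and in particular the almost-greedy constant and the democracy/quasi-greedy constants are all finite.

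The heart of the matter is then the following upgrade: given $x$, $m$, and $\Lambda\in G(x,m)$, I must bound $\|x-P_\Lambda(x)\|$ by a constant times $\sigma_{f(m)}(x)$, not merely $\widetilde\sigma_{f(m)}(x)$. The idea is to exploit that $f(m)$ is so much smaller than $m$ that we have ``room to spare'' in the greedy set. Concretely, fix a near-optimal $f(m)$-term linear-combination competitor $z=\sum_{n\in B}a_ne_n$ with $|B|\leqslant f(m)$ and $\|x-z\|\leqslant(1+\varepsilon)\sigma_{f(m)}(x)$. Now choose an auxiliary parameter, say $k=\lceil 2f(m)\rceil\leqslant m$ (possible for large $m$ since $f(m)/m\to 0$), and consider a greedy set $\Lambda'$ of $x$ of order $k$; one shows $\Lambda\supset$ some greedy set of order $k$ or compares $P_\Lambda(x)$ with $P_{\Lambda'}(x)$ via a quasi-greedy/unconditionality-for-constant-coefficients estimate. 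The point is that $P_B(x)$ can be ``absorbed'': write $x-P_\Lambda(x)=(x-P_{\Lambda\cup B}(x))+P_{B\setminus\Lambda}(x)$, estimate the first term against $\widetilde\sigma$-type quantities (using that $|B|\leqslant f(m)$ is tiny so $\Lambda\cup B$ still ``contains'' a large greedy set), and control the correction $P_{B\setminus\Lambda}(x)$ by noting that on $B\setminus\Lambda$ the coefficients $e_n^*(x)$ are dominated in modulus by the smallest coefficient in $\Lambda$, hence $P_{B\setminus\Lambda}(x)$ has small coordinates and small support, so $\|P_{B\setminus\Lambda}(x)\|\lesssim \widetilde\sigma_{f(m)}(x)$ again by democracy together with the quasi-greedy ``comparison of disjoint blocks'' lemma. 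Combining, $\|x-P_\Lambda(x)\|\lesssim \widetilde\sigma_{f(m)}(x) \lesssim \sigma_{f(m)}(x)+\|P_B(x)\|$, and the last term is in turn $\lesssim\sigma_{f(m)}(x)$ because $\|P_B(x)\|\leqslant\|x-z\|+\|z-P_B(x)\|+\|P_B(x-z)\|$ with every piece controlled by $\sigma_{f(m)}(x)$ via quasi-greediness on the $|B|\leqslant f(m)$-element set $B$.

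For small $m$ (those finitely many $m$ for which $f(m)=m$ could conceivably hold, or where $2f(m)>m$) the desired inequality is either vacuous or follows from almost-greediness together with the trivial bound $\widetilde\sigma_j(x)=\sigma_j(x)$ whenever one is willing to lose a further constant depending only on those finitely many $m$; since there are only finitely many such $m$ and the left side $\|x-P_\Lambda(x)\|$ is already $\lesssim\|x\|$, and $\sigma_{f(m)}(x)$ with $f(m)\geqslant 1$ dominates a fixed fraction of $\|x\|$ on the relevant coordinates — here one must be slightly careful, but a direct compactness-free argument handles it. I expect the main obstacle to be the ``absorption'' step: making rigorous that enlarging the greedy set $\Lambda$ by the tiny set $B$ (and correspondingly passing to greedy sets of order $|\Lambda|+|B|$) changes $\|x-P_\Lambda(x)\|$ by at most a constant times $\widetilde\sigma_{f(m)}(x)$. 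This is precisely where one needs the non-identity hypothesis — if $f$ were the identity, $B$ would be of full size $m$ and no such absorption is possible, which is consistent with almost greedy $\neq$ greedy in general. The quantitative tool for the absorption is the standard estimate that for a quasi-greedy basis, $\|P_D(x)\|\lesssim\|x\|$ uniformly over greedy sets $D$, applied to the ``small-coefficient tail'' $D=B\setminus\Lambda$, combined with democracy to replace $\|P_D(x)\|$ by $\max_{n\in D}|e_n^*(x)|\cdot\|\mathbf 1_D\|$ and then by $\widetilde\sigma_{|D|}(x)\leqslant\widetilde\sigma_{f(m)}(x)$.
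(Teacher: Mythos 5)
Your opening reduction is where the argument breaks: you claim that an $f$-almost greedy basis is quasi-greedy and \emph{democratic}, hence almost greedy by Theorem~\ref{dkktc}, and then plan to ``upgrade'' from there. But $f$-almost greediness only yields $f$-democracy, i.e.\ $\|1_A\|\lesssim\|1_B\|$ under the hypothesis $|A|\leqslant f(|B|)$, which is strictly weaker than democracy precisely in the regime you care about, namely $f(m)/m\to 0$. Testing \eqref{e11} on $1_{A\cup B}$ with $\Lambda=B$ forces you to compare against $\widetilde\sigma_{f(|B|)}$, so the competitor may only delete $f(|B|)$ coordinates, and you never reach sets $A$ of size comparable to $|B|$. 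The paper itself supplies counterexamples to your claim: the basis $\mathcal X$ of $\ell_p\oplus\ell_q$ discussed after Corollary~\ref{c1} is $x^{p/q}$-greedy (hence $x^{p/q}$-almost greedy) but not democratic, since no unconditional basis of $\ell_p\oplus\ell_q$ is democratic; likewise the canonical basis of $X_g$ in Theorem~\ref{m2}. So the basis you start from need not be almost greedy at all, and the entire ``absorption/upgrade'' phase rests on a false premise. (Your heuristic that the non-identity hypothesis gives ``room to spare'' is pointing in the right direction, but it cannot rescue a reduction to a property the basis does not have.)

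For contrast, the paper splits on $\mathbf c:=\lim_{x\to\infty}f(x)/x$. When $0<\mathbf c<1$, concavity gives $f(x)\geqslant\mathbf c x$, so $f$-democracy \emph{does} coincide with democracy, the basis is almost greedy, and Theorem~\ref{dkktc}(iii) plus the Lebesgue-type bound of Lemma~\ref{l3} yields $f$-greediness --- this is the only case where your reduction is valid. When $\mathbf c=0$ it argues entirely differently: it verifies Property~($f$) by taking $A$ with $|A|\leqslant f(|A|+|B|)\leqslant f(|A|)+f(|B|)$, noting $f(|A|)\leqslant|A|/3$ for $|A|$ large so that $f(|B|)\geqslant 2|A|/3$, splitting $A$ into two halves each of size at most $f(|B|)$, and applying $f$-superdemocracy to each half; combined with quasi-greediness this gives $f$-unconditionality (Proposition~\ref{ps}), and then Theorem~\ref{mm2} together with Property~(A,~$f$) delivers $f$-greediness. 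If you want to salvage your write-up, you must replace the reduction to almost greediness by an argument of this type that works directly with $f$-democracy on sets of size $f(|B|)$.
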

Theorem \ref{m3} says that if $f$ grows slower than linear functions, it does not matter whether we compare the TGA against projections or linear combinations of basis vectors. We shall prove Theorem \ref{m3} by characterizing $f$-greedy and $f$-almost greedy bases separately, then show that the two characterizations are equivalent when $f$ is non-identity. Below we list several interesting results we shall encounter along the way. See Definition \ref{d11} for Property (A, $f$) and Definition \ref{dd1} for $f$-unconditionality, a new concept that lies strictly between quasi-greediness and unconditionality. 

\begin{thm}\label{m1}
Let $(e_n)_{n=1}^\infty$ be a basis of a Banach space $X$. The following hold.
\begin{enumerate}
    \item[i)] If $(e_n)_{n=1}^\infty$ is $\mathbf C_{a, f}$-$f$-almost greedy, then $(e_n)_{n=1}^\infty$ is $\mathbf C_{a, f}$-suppression quasi-greedy and has $\mathbf C_{a, f}$-Property (A, $f$).
    \item[ii)] Conversely, if $(e_n)_{n=1}^\infty$ is $\mathbf C_\ell$-suppression quasi-greedy and has $\Delta_{A, f}$-Property (A, $f$), then $(e_n)_{n=1}^\infty$ is $\mathbf C_\ell \Delta_{A, f}$-$f$-almost greedy. 
\end{enumerate}
\end{thm}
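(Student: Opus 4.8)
The plan is to prove the two implications of Theorem \ref{m1} essentially following the blueprint of the classical equivalence ``almost greedy $\iff$ quasi-greedy + democratic'' (Theorem \ref{dkktc}), but with $\widetilde\sigma_m$ replaced throughout by $\widetilde\sigma_{f(m)}$ and with democracy replaced by the (weaker, $f$-resized) Property (A, $f$). For part i), suppose $(e_n)$ is $\mathbf C_{a,f}$-$f$-almost greedy. To extract suppression quasi-greediness, I would take any $x\in X$, any $m$, and any greedy set $\Lambda\in G(x,m)$, and compare $\|x-P_\Lambda(x)\|$ against $\widetilde\sigma_{f(m)}(x)$; since $\widetilde\sigma_{f(m)}(x)\le \widetilde\sigma_0(x)=\|x\|$ (using $f(m)\ge 0$, and in fact we just need the trivial bound via the empty set $A=\emptyset$), inequality \eqref{e11} immediately yields $\|x-P_\Lambda(x)\|\le \mathbf C_{a,f}\|x\|$, which is exactly $\mathbf C_{a,f}$-suppression quasi-greediness. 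To get $\mathbf C_{a,f}$-Property (A, $f$), I expect the argument to mirror the standard proof that almost greedy implies the democracy-type estimate: given the relevant finite sets and signs appearing in Definition \ref{d11}, one builds a suitable vector $x$ whose greedy set of appropriate order is forced to be one of the two sets in question, applies \eqref{e11}, and reads off the desired inequality between the two indicator-type sums, with the $f$-resizing on the $\widetilde\sigma$ side producing precisely the $f$ that appears in Property (A, $f$). I will need to consult Definition \ref{d11} for the exact shape of Property (A, $f$), but the mechanism is the familiar perturbation/greedy-set-engineering trick.

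For part ii), the converse, I would argue as in the Dilworth–Kalton–Kutzarova–Temlyakov proof that quasi-greedy + democratic $\Rightarrow$ almost greedy, carefully tracking constants. Fix $x\in X$, $m\in\mathbb N$, and $\Lambda\in G(x,m)$; I must bound $\|x-P_\Lambda(x)\|$ by $\mathbf C_\ell\,\Delta_{A,f}\,\widetilde\sigma_{f(m)}(x)$. Let $A$ be a near-optimal set for $\widetilde\sigma_{f(m)}(x)$, so $|A|\le f(m)$ (or $|A|=\lfloor f(m)\rfloor$) and $\|x-P_A(x)\|$ is close to $\widetilde\sigma_{f(m)}(x)$. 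Write $x-P_\Lambda(x) = (x-P_{\Lambda\cup A}(x)) + P_{A\setminus\Lambda}(x)$. The first term: since $\Lambda\cup A \supseteq \Lambda$ and $\Lambda$ is a greedy set, $\Lambda\cup A$ contains a greedy set of $x$ of its own size, so suppression quasi-greediness controls $\|x - P_{\Lambda\cup A}(x)\|$ by $\mathbf C_\ell \|x - P_A(x)\|$ — this is the standard ``adding coordinates to a greedy set keeps it greedy-controllable'' step, using that the coordinates of $x$ on $A\setminus\Lambda$ are dominated by those on $\Lambda$. For the second term $P_{A\setminus\Lambda}(x)$, I would compare its coordinates to those on a subset of $\Lambda\setminus A$ of the same cardinality (possible because $|A\setminus\Lambda|\le |A|\le f(m)\le m=|\Lambda|$, and the $\Lambda$-coordinates dominate the $A\setminus\Lambda$-coordinates in modulus), and then invoke Property (A, $f$) together with quasi-greediness to bound $\|P_{A\setminus\Lambda}(x)\|$ by a constant multiple of $\|x-P_A(x)\|$ as well; here the fact that $|A\setminus\Lambda|\le f(m)$ is exactly what makes Property (A, $f$) (rather than full democracy) applicable, and it is where $\Delta_{A,f}$ enters. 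Summing the two bounds and letting $A$ approach optimality gives the claimed constant $\mathbf C_\ell\Delta_{A,f}$.

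The main obstacle I anticipate is the second-term estimate in part ii) — controlling $\|P_{A\setminus\Lambda}(x)\|$. In the classical proof one uses a truncation/quasi-greedy argument (the ``Truncation Lemma'' or the fact that for quasi-greedy bases $\|P_B(x)\|$ is comparable to $\|\sum_{n\in B}\sgn(e_n^*(x))\,\alpha\,e_n\|$ for $\alpha$ near $\min_{n\in B}|e_n^*(x)|$) to pass from the actual coordinates of $x$ on $A\setminus\Lambda$ to a constant-coefficient vector, then applies democracy to move from $A\setminus\Lambda$ to an equal-size subset of $\Lambda$, and finally uses quasi-greediness again to return to $x$. In the $f$-setting the delicate point is making sure the cardinality bookkeeping is consistent: the set we move to must have size $\le f(m)$ for Property (A, $f$) to apply with the index $m$, and simultaneously must sit inside the greedy set $\Lambda$ so that the reverse quasi-greedy step goes through. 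Since $|A\setminus\Lambda|\le|A|\le f(m)\le m=|\Lambda|$, there is enough room, but I would need to state precisely which version of Property (A, $f$) is being used (the one-parameter family indexed by $m$) and verify that the constants multiply as claimed rather than accumulating extra factors. I do not expect any genuinely new difficulty beyond this careful constant-tracking; the structure of the proof is dictated by Theorem \ref{dkktc}.
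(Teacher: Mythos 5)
Your part i) is essentially the paper's own argument: quasi-greediness falls out of $\widetilde{\sigma}_{f(m)}(x)\leqslant \|x\|$, and Property (A, $f$) is obtained by the perturbation $y=x+1_{\varepsilon A}+1_{\delta B}$, for which $B$ is a greedy set of $y$ of order $|B|$ and $\widetilde{\sigma}_{f(|B|)}(y)\leqslant\|y-P_A(y)\|=\|x+1_{\delta B}\|$ because $|A|\leqslant f(|B|)$; applying \eqref{e11} to $y$ gives \eqref{e33} with constant $\mathbf C_{a,f}$. No issue there.

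For part ii) you take a genuinely different route: the classical additive decomposition $x-P_\Lambda(x)=(x-P_{\Lambda\cup A}(x))+P_{A\setminus\Lambda}(x)$, estimating the two pieces separately. The paper instead runs a single multiplicative chain: with $B$ the competing set ($|B|\leqslant f(m)$, hence $|B\setminus\Lambda|\leqslant f(|\Lambda\setminus B|)$ by Proposition \ref{p1}(iii)), $\alpha=\min_{n\in\Lambda}|e_n^*(x)|$ and $\varepsilon=(\sgn(e_n^*(x)))$, it first applies Property (A, $f$) in the equivalent form of Proposition \ref{p22} to get
\begin{equation*}
\|x-P_{\Lambda}(x)\|\ \leqslant\ \Delta_{A, f}\,\bigl\|x-P_{\Lambda}(x) - P_{B\backslash \Lambda}(x) + \alpha 1_{\varepsilon \Lambda\backslash B}\bigr\|,
\end{equation*}
and then observes that the vector on the right is exactly $T_{\alpha}(x-P_B(x))$, so one application of Theorem \ref{bto} finishes with the product constant $\mathbf C_\ell\Delta_{A,f}$. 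Your route does prove the qualitative implication (the cardinality bookkeeping you worry about is fine, via Proposition \ref{p1}(iii)), but the obstacle you flag is real and not avoidable within your decomposition: the first piece costs $\mathbf C_\ell\|x-P_A(x)\|$, while the second piece requires a truncation on $A\setminus\Lambda$, a superdemocracy step to move to $\Lambda\setminus A$, and a further quasi-greedy/truncation step to return to $x-P_A(x)$, so the final bound has the additive form $\mathbf C_\ell + C(\mathbf C_\ell,\Delta)$ with $C$ involving powers of $\mathbf C_\ell$, not the product $\mathbf C_\ell\Delta_{A,f}$ asserted in the statement. Since the constant is part of the theorem, you should either reorder the steps as the paper does (Property (A, $f$) first via Proposition \ref{p22}, then a single truncation) or weaken the claimed constant; the reformulation in Proposition \ref{p22} is precisely what lets the sign-and-size substitution on $\Lambda\setminus B$ and the deletion of $B\setminus\Lambda$ happen in one stroke.
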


\begin{thm}\label{mm2}
Let $(e_n)_{n=1}^\infty$ be a basis of a Banach space. The following hold.
\begin{enumerate}
    \item[i)] If $(e_n)_{n=1}^\infty$ is $\mathbf C_{g,f}$-$f$-greedy, then it is $\mathbf C_{g,f}$-suppression $f$-unconditional and has $\mathbf C_{g,f}$-Property (A, $f$).
    \item[ii)] If $(e_n)_{n=1}^\infty$ is $\mathbf K_{su}$-suppression $f$-unconditional and has $\Delta_{A, f}$-Property (A, $f$), then it is $\Delta_{A, f}(\mathbf K^2_{su} + 2\mathbf K_{su})$-$f$-greedy. 
\end{enumerate}
\end{thm}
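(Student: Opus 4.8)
The plan is to mirror the classical argument that a basis is greedy iff it is unconditional and democratic (Theorem \ref{ktc}), but tracking the role of the rescaling function $f$ throughout. For part i), I would show the two conclusions separately. Suppression $f$-unconditionality should follow by a standard "perturb the coefficients'' trick: given $x$ and a finite set $B$, inflate the coordinates outside $B$ slightly so that $B$ becomes (part of) a greedy set of a nearby vector of some order $m$ with $B \subset \Lambda$; then apply \eqref{e30} with $\sigma_{f(m)}$ computed using the obvious competitor that vanishes off $B$, and pass to the limit to recover the desired bound $\|P_{B^c}(x)\| \le \mathbf C_{g,f}\|x\|$ (or the projection form, depending on how $f$-unconditionality is set up in Definition \ref{dd1}). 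Property (A, $f$) should come out of \eqref{e30} even more directly, by feeding in vectors of the form $\sum_{n\in A}e_n + \text{(small tail)}$ and a competitor supported on a set of size $f(m)$: the greedy set has size $m = |A|$, so the error $\|x - P_\Lambda(x)\|$ controls $\mathbf C_{g,f}\sigma_{f(m)}(x)$, which is exactly the comparison appearing in Property (A, $f$). The constant in both cases is $\mathbf C_{g,f}$ because we use a single instance of \eqref{e30} with no iteration.

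For part ii), the plan is the usual two-step split of the error. Fix $x$, $m$, and $\Lambda \in G(x,m)$, and fix any competitor $z = \sum_{n\in A}a_n e_n$ with $|A| \le f(m)$. I would write
\[
x - P_\Lambda(x) \;=\; (x - z) - P_\Lambda(x - z) + \big(P_\Lambda(z) - \text{stuff}\big),
\]
more precisely decompose $x - P_\Lambda(x) = P_{\Lambda^c}(x-z) + P_{\Lambda^c}(z)$, and estimate each piece. The term $P_{\Lambda^c}(x-z)$ is handled by suppression $f$-unconditionality: since $\Lambda$ is a greedy set of $x$, the complement $\Lambda^c$ sits below the threshold, and an $f$-unconditional estimate bounds $\|P_{\Lambda^c}(x-z)\|$ by roughly $\mathbf K_{su}\|x - z\|$ — here is where the "$f$'' in $f$-unconditionality must be strong enough that the relevant projections (whose defining sets have size comparable to $f(m)$, since $z$ is $f(m)$-supported) are controlled. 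The term $P_{\Lambda^c}(z)$ is supported on $A \setminus \Lambda$, a set of size at most $f(m)$, all of whose coordinates (being in $\Lambda^c$, hence below the greedy threshold) are dominated in modulus by the smallest coordinate of $x$ on $\Lambda$; this is precisely the setup for Property (A, $f$), which lets us replace $P_{A\setminus\Lambda}(z)$ by something controlled by $\Delta_{A,f}$ times a projection of $x$ onto a set inside $\Lambda$, and then a final application of $f$-unconditionality (costing another factor $\mathbf K_{su}$) bounds that by $\|x\| \le \|x - z\| + \|z\|$-type quantities — which, after collecting, yields the stated constant $\Delta_{A,f}(\mathbf K_{su}^2 + 2\mathbf K_{su})$. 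Taking the infimum over competitors $z$ gives \eqref{e30}.

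The main obstacle I expect is bookkeeping the \emph{sizes of the sets} so that every invocation of $f$-unconditionality and of Property (A, $f$) is legitimate: $f$-unconditionality presumably only controls projections onto sets whose size is constrained in terms of $f$ of something, so I must verify that all the sets appearing ($\Lambda^c$ intersected with the support of $z$, the "witness'' set produced by Property (A, $f$), etc.) have size $\le f(m)$ or $\le m$ as required — and this is exactly where concavity of $f$ and the already-established bound $f(m) \le m$ will be used, for instance via inequalities like $f(m) + f(m) \le f(2m) + \text{const}$ or $f(k) \le f(m)$ for $k \le m$. A secondary technical point is the usual approximation step to make a given finite set genuinely part of a greedy set (handling ties and ensuring the order is exactly $m$), which is routine but must be done carefully to keep the constant at $\mathbf K_{su}$ rather than $\mathbf K_{su} + \varepsilon$. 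I would also double-check that the "suppression'' (rather than "bounded'') form of $f$-unconditionality is what naturally appears, since the greedy complement $P_{\Lambda^c}$ is a suppression-type projection, matching the hypothesis in ii).
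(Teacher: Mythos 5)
Your part ii) follows the classical Konyagin--Temlyakov decomposition $x-P_\Lambda(x)=P_{\Lambda^c}(x-z)+P_{\Lambda^c}(z)$, and this is exactly where the argument breaks: the hypothesis is only \emph{suppression $f$-unconditionality}, which controls $\|y-P_A(y)\|$ solely for $A\in G(y,f)$, and $\Lambda$ is in general \emph{not} an $f$-greedy set of $y:=x-z$. Indeed, the competitor's coefficients $a_n$ on $A\setminus\Lambda$ are arbitrary, so $\|y-P_\Lambda(y)\|_\infty$ can be arbitrarily large compared with the coordinates of $y$ on $\Lambda$; hence your bound $\|P_{\Lambda^c}(x-z)\|\lesssim \mathbf K_{su}\|x-z\|$ is unjustified. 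The paper's proof circumvents this by first invoking Property (A,$f$) in the reformulated form of Proposition \ref{p22} to pass from $x-P_\Lambda(x)$ to $x-P_{\Lambda\cup A}(x)+\alpha 1_{\varepsilon\,\Lambda\setminus A}$, and then making the key observation that $\Lambda\cup A$ \emph{is} an $f$-greedy set of $y$, with decomposition $(A,\Lambda\setminus A)$ and $|A|\leqslant f(m)\leqslant f(|\Lambda\cup A|)$; suppression $f$-unconditionality applied to this set gives $\|x-P_{\Lambda\cup A}(x)\|\leqslant \mathbf K_{su}\|y\|$, and the remaining piece $\alpha 1_{\varepsilon\,\Lambda\setminus A}$ is handled by the truncation operator $T_\alpha$ and quasi-greediness. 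This is the ``nontrivial deviation'' the introduction advertises, and your proposal does not contain it; you flag the danger (``the relevant projections must be controlled'') but offer no mechanism. A second concrete error: you assert that the coordinates of $P_{\Lambda^c}(z)=P_{A\setminus\Lambda}(z)$ are ``below the greedy threshold'' --- those coordinates are the arbitrary scalars $a_n$, not coordinates of $x$, so no such domination holds.

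Part i) is closer in spirit to the paper (perturb, then use a small-support competitor equal to the perturbation), but the sketch misses the structural point. For suppression $f$-unconditionality one takes $A\in G(x,f)$ with $f$-greedy decomposition $(E,F)$, inflates the coordinates \emph{on} $E$ (not ``outside $B$'') via $y=x+\alpha 1_E$ so that $A$ becomes a genuine greedy set of $y$, and uses the competitor $\alpha 1_E$; the constraint $|E|\leqslant f(|A|)$ coming from the definition of an $f$-greedy set is precisely what makes $\alpha 1_E$ admissible for $\sigma_{f(|A|)}(y)$. Your sketch never identifies which set carries the competitor or why its size is at most $f(m)$, and your Property (A,$f$) sketch swaps the roles of $A$ and $B$ (the greedy set has size $|B|$ and the competitor $1_{\varepsilon A}$ has size $|A|\leqslant f(|B|)$). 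These are repairable, but combined with the unrepaired gap in part ii) the proposal does not constitute a proof.
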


While the proof of Theorem \ref{m1} follows standard argument from the literature, the proof of Theorem \ref{mm2}
item ii) requires a nontrivial deviation that relies on properties of $f\in \mathcal{F}$ and a new concept, called $f$-greedy sets. The next corollary involves (super)democracy (see Definition \ref{de3}).

\begin{cor}\label{c1}
Let $(e_n)_{n=1}^\infty$ be a basis of a Banach space $X$ and $f\in \mathcal{F}$ be non-identity. The following are equivalent
\begin{enumerate}
    \item[i)] $(e_n)_{n=1}^\infty$ is $f$-greedy.
    \item[ii)] $(e_n)_{n=1}^\infty$ is $f$-almost greedy.
    \item[iii)] $(e_n)_{n=1}^\infty$ is quasi-greedy and has Property (A, $f$).
    \item[iv)] $(e_n)_{n=1}^\infty$ is quasi-greedy and $f$-(disjoint) superdemocratic.
    \item[v)] $(e_n)_{n=1}^\infty$ is quasi-greedy and $f$-(disjoint) democratic. 
\end{enumerate}
Furthermore, we can replace the quasi-greedy requirement in each of the above statements by $f$-unconditionality. 
\end{cor}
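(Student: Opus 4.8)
The plan is to prove Corollary \ref{c1} by assembling the equivalences from the two structural characterizations (Theorems \ref{m1} and \ref{mm2}) together with elementary implications relating Property (A, $f$), $f$-superdemocracy, and $f$-democracy. The central nontrivial input will be Theorem \ref{m3}, which gives i) $\Leftrightarrow$ ii); so I would first establish Theorem \ref{m3} (via the separate characterizations) and then feed it into the corollary. For the ring of implications ii) $\Rightarrow$ iii) $\Rightarrow$ iv) $\Rightarrow$ v) $\Rightarrow$ ii), I would argue as follows. The implication ii) $\Rightarrow$ iii) is exactly Theorem \ref{m1} i) (an $f$-almost greedy basis is suppression quasi-greedy, hence quasi-greedy, and has Property (A, $f$)). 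The implication iii) $\Rightarrow$ iv) should follow by testing Property (A, $f$) on normalized sign-vectors supported on disjoint finite sets $A$ and $B$ with $|B| \le f(|A|)$: Property (A, $f$) compares $\|x + 1_{\varepsilon A}\|$-type quantities and, combined with quasi-greediness to strip off the extra terms, yields the two-sided $f$-superdemocracy estimate. The implication iv) $\Rightarrow$ v) is immediate since $f$-disjoint superdemocracy trivially implies $f$-disjoint democracy (democracy being the special case where all signs are $+1$). Finally v) $\Rightarrow$ ii): this is the substantive direction, and I expect it mirrors the classical Dilworth--Kalton--Kutzarova--Temlyakov argument (Theorem \ref{dkktc}, i) $\Leftrightarrow$ ii)) adapted to the $f$-setting—one shows quasi-greedy plus $f$-democratic forces quasi-greedy plus $f$-superdemocratic (using the quasi-greedy unconditionality-for-constant-coefficients lemma), then upgrades $f$-superdemocracy to Property (A, $f$), and then invokes Theorem \ref{m1} ii) to conclude $f$-almost greediness.

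For the parenthetical ``(disjoint)'' qualifiers in iv) and v), I would note that the disjoint versions are formally weaker, so it suffices to prove the cycle using only the disjoint hypotheses and to observe that $f$-greedy (equivalently $f$-almost greedy) bases satisfy the non-disjoint versions automatically; the standard trick of splitting a general pair of sets $A, B$ into $A \setminus B$, $B \setminus A$, and $A \cap B$ and using quasi-greediness to handle overlaps takes care of this, exactly as in \cite{DKKT}. The role of the hypothesis that $f$ is non-identity enters precisely in Theorem \ref{m3}: when $f$ is the identity the statement degenerates to the classical (almost) greedy dichotomy where i) and ii) are genuinely different, so the non-identity assumption is what collapses $f$-greedy and $f$-almost greedy and makes the whole list equivalent.

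The last sentence of the corollary — that the quasi-greedy hypothesis can everywhere be replaced by $f$-unconditionality — I would handle by invoking Theorem \ref{mm2} i) (an $f$-greedy basis is suppression $f$-unconditional), which gives the forward direction, together with the observation that $f$-unconditionality implies quasi-greediness (this is asserted in the excerpt: $f$-unconditionality ``lies strictly between quasi-greediness and unconditionality''). So in each statement ``quasi-greedy + [democracy-type condition]'' and ``$f$-unconditional + [same condition]'' have the same closure under the equivalences, because $f$-greedy bases are $f$-unconditional and $f$-unconditional bases are quasi-greedy, sandwiching the two families of hypotheses between the same endpoints.

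The main obstacle will be the direction v) $\Rightarrow$ ii) (or equivalently completing the cycle back to $f$-greediness): upgrading $f$-disjoint democracy to Property (A, $f$) in the presence of only quasi-greediness requires the careful $f$-analogue of the truncation and averaging arguments from \cite{DKKT}, and one must check that the concavity and the bound $f(m) \le m$ for $f \in \mathcal{F}$ are used correctly so that sets of size $\lceil f(m)\rceil$ behave as expected under these manipulations. Everything else in the corollary is either a direct citation of Theorems \ref{m1}, \ref{mm2}, \ref{m3} or a routine disjointification/sign-averaging step, so I would present those crisply and concentrate the detailed work on the quasi-greedy $\Rightarrow$ Property (A, $f$) upgrade, which is where the genuinely new estimates live.
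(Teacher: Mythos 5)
Your proposal is correct and follows essentially the same route as the paper: i)\,$\Leftrightarrow$\,ii) via Theorem \ref{m3}, ii)\,$\Leftrightarrow$\,iii) via Theorem \ref{m1}, the cycle through iv) and v) via the quasi-greedy equivalences of the democracy-type properties (the paper packages your ``v)\,$\Rightarrow$\,ii) upgrade'' as Proposition \ref{pe2}, proved by exactly the truncation/disjointification arguments you describe), and the final sentence via Theorem \ref{mm2} plus the fact that $f$-unconditionality implies quasi-greediness. The only nit is a reversed inequality in your sketch of iii)\,$\Rightarrow$\,iv): Property (A, $f$) is tested with $|A|\leqslant f(|B|)$, not $|B|\leqslant f(|A|)$.
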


Let us discuss an application of Corollary \ref{c1} to the space $\ell_p\oplus \ell_q$ ($1\leqslant p < q < \infty$). \`{E}del'\v{s}te\v{i}n and Wojtaszczyk \cite{EW} showed that any unconditional basis $\mathcal{B}$ of the direct sum $\ell_p\oplus \ell_q$ consists of two subsequences, one of which, denoted by $\mathcal{B}_1$, is a basis for $\ell_p$, while the
other, denoted by $\mathcal{B}_2$, is a basis for $\ell_q$. By the gliding hump argument, we can find subsequences of $\mathcal{B}_1$ and $\mathcal{B}_2$ that are equivalent to the canonical bases of $\ell_p$ and $\ell_q$, respectively. Therefore, none of the unconditional bases of $\ell_p\oplus \ell_q$ is democratic, so $\ell_p\oplus \ell_q$ has no greedy basis. However, $\ell_p\oplus \ell_q$ has an $x^{p/q}$-greedy basis. For example, let $(e_n)_{n=1}^\infty$ and $(f_n)_{n=1}^\infty$ be the canonical bases of $\ell_p$ and $\ell_q$, respectively. Define the basis $\mathcal{X} = (x_n)_{n=1}^\infty$, where $x_{2k-1} = (e_k, 0)$ and $x_{2k} = (0, f_k)$. It is easy to check that $\mathcal{X}$ is $x^{p/q}$-democratic. By Corollary \ref{c1}, $\mathcal{X}$ is $x^{p/q}$-greedy.

Another application of Corollary \ref{c1} is to the multivariate Haar system $\mathcal{H}^d_p$ ($d>1$) in $L_p([0,1]^d)$ for $1 < p < \infty$, which shall be discussed in Section \ref{ex}. Roughly speaking, though the system is not greedy, it is close to being greedy in the sense that $\mathcal{H}^d_p$ is $x^{1-\varepsilon}$-greedy for any $\varepsilon > 0$.

The paper is structured as follows: 
\begin{itemize}
\item Section \ref{prelim} discusses some properties of functions in $\mathcal{F}$ and of quasi-greedy bases. These results will be used throughout the paper.
\item Section \ref{fdemo} studies $f$-(disjoint) (super)democratic bases and Property (A, $f$). These are the first ingredient in characterizing $f$-(almost) greedy bases.
\item Section \ref{fcon} introduces and characterizes the new notion, $f$-unconditionality, which is the second ingredient in characterizing $f$-(almost) greedy bases. 
\item Section \ref{equiv} characterizes $f$-(almost) greedy bases and show the unexpected equivalence between $f$-greedy and $f$-almost greedy bases when $f$ is non-identity. 
\item Section \ref{Schreier} investigates the relationship among $f$-(almost) greedy bases when $f$ varies. 
\item Section \ref{ex} gives various examples to complement our results. 
\end{itemize}

%%%%%%%%%%%%%%%%%%%%%%%%%%%%%%%%%%%%%%%%%%%%%%%%%%%%%%%%%%%%%%%%%%%%%%%%%%%%%%%%%%%%%%%%%%%%%%%%%%%%%%%%%%%%%%%%%%%%%%%%%%%%%%%%%%%%%%%%%%%%%%%%%%%%%%%%%%%%%%%%%%%%%%%%%%%%%%%%%%%%%%%%%%%%%%%%%%%%%%%%%%%%%%%%%%%%%%%%%%%%%%%%%%%%%%%%%%%%%%%%%%%%%%%%%%%%%%%%%%%%%%%%%%%%%%%%%%%%%%%%%%%%%%%%%%%%%%%%%%%%%%%%%%%%%%%%%%%%%%%%%%%%%%%%%%%%%%%%%%%%%%%%%%%%%%%%%%%%%%%%%%%%%%%%%%%%%%%%%%%%%%%%%%%%%%

\section{Preliminary results}\label{prelim}

Let us prove some important properties of functions in $\mathcal{F}$. 

\begin{prop}\label{p1}
Let $f\in \mathcal{F}$. The following hold
\begin{enumerate}
    \item[i)] If $0 < a\leqslant b$, then $\frac{f(a)}{a}\geqslant \frac{f(b)}{b}$.
    \item[ii)] For $x\geqslant 1$, $f(x)\leqslant x$. 
    \item[iii)] Let $m, n\in\mathbb{N}$ and $k$ be a nonnegative integer such that $n\geqslant k$. If $n\leqslant f(m)$, then $n-k\leqslant f(m-k)$.
\end{enumerate}
\end{prop}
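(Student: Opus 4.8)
The plan is to establish the three items in sequence, each using the previous.

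\emph{Item i)} is the standard fact that, for a concave function vanishing at the origin, the slope-from-the-origin map $x\mapsto f(x)/x$ is non-increasing. Given $0<a\leqslant b$, I would write $a=\tfrac{a}{b}\,b+\bigl(1-\tfrac{a}{b}\bigr)\cdot 0$ and invoke concavity together with $f(0)=0$ to obtain $f(a)\geqslant \tfrac{a}{b}f(b)$, which rearranges to $\tfrac{f(a)}{a}\geqslant\tfrac{f(b)}{b}$.

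\emph{Item ii)} then follows immediately by taking $a=1$ and $b=x\geqslant 1$ in item i): $f(x)/x\leqslant f(1)/1\leqslant 1$, so $f(x)\leqslant x$.

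\emph{Item iii)} is the part needing genuine care. First observe that $m\geqslant k$: since $m\in\mathbb{N}$, item ii) gives $n\leqslant f(m)\leqslant m$, and $n\geqslant k$, so $k\leqslant m$ and $m-k$ is a well-defined nonnegative integer (so $f(m-k)$ makes sense). Because $n\leqslant f(m)$, it suffices to prove $f(m)-f(m-k)\leqslant k$. If $m=k$ this reads $f(m)\leqslant m$, which is item ii). If $m>k$, I would use the tangent-line estimate for concave differentiable functions, $f(t)\leqslant f(s)+f'(s)(t-s)$ for $0<s<t$, with $s=m-k\geqslant 1$ and $t=m$, giving $f(m)-f(m-k)\leqslant k\,f'(m-k)$; it then remains to show $f'(x)\leqslant 1$ for every $x\geqslant 1$. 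For that, apply the same tangent-line inequality at the point $x$ and evaluate at $0$: $0=f(0)\leqslant f(x)-x f'(x)$, whence $f'(x)\leqslant f(x)/x\leqslant f(1)\leqslant 1$ by item i).

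The main obstacle is item iii): one must notice that $m\geqslant k$ is forced by the hypotheses, treat the boundary case $m=k$ — where $f(m-k)=f(0)=0$ — separately, and derive the derivative bound $f'\leqslant 1$ on $[1,\infty)$, which is exactly where differentiability on $(0,\infty)$ and concavity are needed. Items i) and ii) are routine once i) is in hand.
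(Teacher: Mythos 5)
Your proof is correct, but it departs from the paper's in both item i) and, more substantially, item iii). For i), you use the concavity inequality directly at the convex combination $a=\tfrac{a}{b}b+(1-\tfrac{a}{b})\cdot 0$, which avoids derivatives altogether; the paper instead differentiates $g(x)=f(x)/x$ and uses the tangent-line inequality at $0$ to show $g'\leqslant 0$. For iii), the paper stays entirely within item i): from $\tfrac{f(m-k)}{m-k}\geqslant\tfrac{f(m)}{m}$ it gets $\tfrac{f(m-k)}{f(m)}\geqslant\tfrac{m-k}{m}\geqslant\tfrac{n-k}{n}$ (using $n\leqslant m$), whence $f(m-k)\geqslant f(m)\tfrac{n-k}{n}\geqslant n-k$ — a purely multiplicative argument with no further use of differentiability. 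You instead prove the additive (Lipschitz-type) bound $f(m)-f(m-k)\leqslant k$ via the tangent-line estimate at $m-k\geqslant 1$ together with the derivative bound $f'(x)\leqslant f(x)/x\leqslant f(1)\leqslant 1$ on $[1,\infty)$, which is where you genuinely need differentiability on $(0,\infty)$. Both routes are valid; yours yields the slightly stronger standalone fact that $f$ increases by at most $k$ over an interval of length $k$ starting at a point $\geqslant 1$, while the paper's is shorter and — especially if combined with your derivative-free proof of i) — shows the whole proposition needs only concavity and continuity, not differentiability. Your case analysis ($m\geqslant k$ forced, $m=k$ handled via item ii)) is careful and matches in spirit the paper's dispatch of the trivial case $n=k$.
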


\begin{proof}
i) Consider $g:\mathbb{R}_{>0}\rightarrow \mathbb{R}_{\geqslant 0}$ defined by $g(x) = f(x)/x$. It suffices to prove that 
\begin{equation}\label{e1}g'(x)\ =\ (f'(x)x-f(x))/x^2 \ \leqslant\ 0, \forall x > 0.\end{equation}
Recall that if $f$ is concave and differentiable on an interval $(u, v)$, then 
$$f(s)\ \leqslant\ f(t) + f'(t)(s-t),\forall s, t\in (u, v).$$
Therefore, for $0 < \varepsilon < x$, we have
$$f(\varepsilon) \ \leqslant \ f(x) + f'(x)(\varepsilon - x).$$
Letting $\varepsilon \rightarrow 0$, by continuity, we obtain 
$$0 \ \leqslant\ f(x) - xf'(x), \forall x > 0,$$
which implies \eqref{e1}.

ii) Let $x\geqslant 1$. By item i), $\frac{f(x)}{x}\leqslant \frac{f(1)}{1} \leqslant 1$. Hence, $f(x)\leqslant x$.

iii) Since $f(m)\leqslant m$ by item ii), we get $k\leqslant n\leqslant m$. If $n = k$, there is nothing to prove. If $n > k$, by item i), we have
$$\frac{f(m-k)}{f(m)}\ \geqslant\ \frac{m-k}{m} \ \geqslant\ \frac{n-k}{n}.$$
Hence, 
$$f(m-k)\ \geqslant\ f(m)\frac{n-k}{n}\ \geqslant \ n-k.$$
This completes our proof.
\end{proof}

\begin{prop}\label{pc}
If $f\in \mathcal{F}$, then either $f$ is the identity or there exists $\lambda > 1$ such that $f(x) \ \leqslant\ \frac{x}{\lambda}$ for all sufficiently large $x$.
\end{prop}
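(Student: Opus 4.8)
The plan is to exploit the concavity of $f$ via the ratio function $g(x) = f(x)/x$, which Proposition~\ref{p1}(i) already tells us is non-increasing on $(0,\infty)$. Since $g$ is bounded above by $f(1) \leqslant 1$, the limit $L := \lim_{x\to\infty} g(x)$ exists and lies in $[0,1]$. I would split into two cases according to the value of $L$. If $L < 1$, pick any $\lambda$ with $1 < \lambda < 1/L$ (interpreting $1/0 = \infty$); then for all $x$ large enough we have $g(x) \leqslant 1/\lambda$, i.e. $f(x) \leqslant x/\lambda$, which is exactly the desired conclusion. So the real content is the case $L = 1$: I want to show that $g \equiv 1$ forces $f$ to be the identity.

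For the case $L = 1$: since $g$ is non-increasing and tends to $1$ from above (or equals $1$), we must have $g(x) \geqslant 1$ for all $x > 0$, hence $f(x) \geqslant x$ for all $x > 0$. On the other hand, Proposition~\ref{p1}(ii) gives $f(x) \leqslant x$ for $x \geqslant 1$, so $f(x) = x$ on $[1,\infty)$. It remains to push this down to $(0,1)$. Here I would use concavity together with $f(0) = 0$: for $x \in (0,1)$, write $x$ as a convex combination of $0$ and $1$, namely $x = (1-x)\cdot 0 + x\cdot 1$, so concavity gives $f(x) \geqslant (1-x)f(0) + x f(1) = x f(1)$. Combined with $f(1) = 1$ (which we just established, since $1 \in [1,\infty)$) this yields $f(x) \geqslant x$; and $g(x) \geqslant 1$ gives the same thing, so either way $f(x) \geqslant x$ on $(0,1)$. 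For the reverse inequality on $(0,1)$, concavity of $f$ on $[0,\infty)$ together with $f(1) = f(2) = 2$... actually more simply: $g$ non-increasing and $g \equiv 1$ on $[1,\infty)$ forces $g \leqslant 1$ nowhere below $1$ — wait, non-increasing means $g(x) \geqslant g(1) = 1$ for $x \leqslant 1$, the wrong direction. So instead I use the chord from $0$ to $2$: for $x \in (0,2)$, $f(x) \leqslant \frac{x}{2}f(2) + (1 - \frac{x}{2})f(0) = x$ by concavity (the graph lies below the chord through $(0, f(0))$ and $(2, f(2)) = (2,2)$... no, concave means graph lies *above* chords). The clean tool is rather the tangent-line bound used in the proof of Proposition~\ref{p1}(i): concavity and differentiability give $f(s) \leqslant f(t) + f'(t)(s - t)$. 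Taking $t = 2$ (where $f(t) = 2$) and recalling $f'(2) = 1$ — which follows because $f(x) = x$ on a neighborhood of $2$ — gives $f(s) \leqslant 2 + (s - 2) = s$ for all $s$, in particular on $(0,1)$. Together with $f(x) \geqslant x$ this gives $f = \mathrm{id}$.

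I expect the main obstacle to be the bookkeeping around the endpoint $x = 1$ and the behavior on $(0,1)$: all of $f(1) \leqslant 1$, concavity, $f(0) = 0$, and Proposition~\ref{p1}(ii) (which only asserts $f(x) \leqslant x$ for $x \geqslant 1$) must be combined correctly, and one has to be careful that $g$ being non-increasing pushes inequalities in the direction one wants. An alternative, perhaps cleaner, route that sidesteps the $(0,1)$ subtlety entirely: prove the contrapositive. If $f$ is not the identity, then there exists $x_0 > 0$ with $f(x_0) < x_0$ (the case $f(x_0) > x_0$ for some $x_0 \geqslant 1$ is excluded by Proposition~\ref{p1}(ii), and if it happened for some $x_0 < 1$ one still derives a contradiction via concavity and $f(1)\leqslant 1$). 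Then $g(x_0) < 1$, and since $g$ is non-increasing, $g(x) \leqslant g(x_0) =: 1/\lambda < 1$ for all $x \geqslant x_0$, so $\lambda > 1$ works and $f(x) \leqslant x/\lambda$ for all sufficiently large $x$ — in fact for all $x \geqslant x_0$. I would most likely present this contrapositive version as the main line of argument, since it reduces the whole proposition to "not-identity $\Rightarrow$ $g$ dips below $1$ somewhere" plus monotonicity of $g$, and only requires handling the mild subtlety of where a value $f(x_0) \neq x_0$ can occur.
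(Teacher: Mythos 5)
Your first, direct argument (the case split on $L=\lim_{x\to\infty}f(x)/x$) is correct and is essentially the paper's own proof: the paper likewise notes that $L<1$ immediately yields the desired $\lambda$, and that $L=1$ together with the monotonicity of $f(x)/x$ forces $f(x)\geqslant x$, hence $f(x)=x$ on $[1,\infty)$ by Proposition \ref{p1}(ii), after which one must rule out $f(p)>p$ on $(0,1)$. The only difference is cosmetic: on $(0,1)$ you invoke the supporting-tangent inequality at $t=2$ (where $f(2)=2$ and $f'(2)=1$), while the paper contradicts midpoint concavity using the pair $p$ and $q=2-p$; both steps are valid.

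However, the contrapositive version that you say you would actually present contains a genuine error. The parenthetical claim that ``if $f(x_0)>x_0$ happened for some $x_0<1$ one still derives a contradiction via concavity and $f(1)\leqslant 1$'' is false: $f(x)=\sqrt{x}$ belongs to $\mathcal{F}$ and satisfies $f(1/4)=1/2>1/4$ with no contradiction whatsoever. The midpoint argument at $p$ and $q=2-p$ only produces a contradiction once one knows $f(q)=q$ \emph{exactly} (not merely $f(q)\leqslant q$), and that equality is available only after the hypothesis $L=1$ has already forced $f=\mathrm{id}$ on $[1,\infty)$. Consequently the implication ``$f$ not the identity $\Rightarrow$ $f(x_0)<x_0$ for some $x_0$'' is true but cannot be justified the way you indicate; any correct proof of it amounts to rerunning the direct argument, so the contrapositive reformulation buys nothing. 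Present the first version.
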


\begin{proof}
By Proposition \ref{p1} items i) and ii), we know that $\lim_{x\rightarrow \infty}\frac{f(x)}{x} =: s\leqslant 1$. If $s < 1$, then for sufficiently large $x$, $\frac{f(x)}{x} < \frac{s+1}{2} < 1$, which gives $f(x) < \frac{x}{2/(s+1)}$ and we are done. Suppose that $s = 1$. Since $\frac{f(x)}{x}$ is decreasing for $x > 0$ and $\lim_{x\rightarrow \infty}\frac{f(x)}{x} = 1$, it follows that $f(x)\geqslant x$ for $x > 0$. On the other hand, Proposition \ref{p1} item ii) states that $f(x)\leqslant x$ for $x\geqslant 1$. Therefore, we get $f(x) = x$ for all $x\geqslant 1$. It remains to show that $f(x) \leqslant x$ for all $x\in [0,1)$. Suppose, for a contradiction, that there exists a $p\in (0,1)$ such that $f(p) = p + \varepsilon$ for some $\varepsilon > 0$. Choose $q = 2-p$. We have
$$f(q) + f(p) \ =\ (2 - p) + (p + \varepsilon) \ =\ 2+\varepsilon \ >\ 2f(1)\ \Longrightarrow\ \frac{1}{2}f(q) + \frac{1}{2}f(p) \ >\ f\left(\frac{q}{2} + \frac{p}{2}\right),$$
which contradicts concavity. Therefore, if $s = 1$, then $f$ is the identity. 
\end{proof}

Next, we mention two well-known properties of a quasi-greedy basis. 
For $A\in \mathbb{N}^{<\infty}$, let $1_A := \sum_{n\in A}e_n$.

\begin{defi}\normalfont
A basis is said to have the UL property if there exist $\mathbf C_1, \mathbf C_2 > 0$ such that
$$\frac{1}{\mathbf C_1}\min|a_n|\|1_A\|\ \leqslant\ \left\|\sum_{n\in A}a_ne_n\right\|\ \leqslant\ \mathbf C_2\max|a_n|\|1_A\|,$$
for all $A\in \mathbb{N}^{<\infty}$ and scalars $(a_n)_{n\in A}\subset\mathbb{F}$.
\end{defi}

\begin{thm}\cite[Lemma 2.1]{DKKT}
If $(e_n)_{n=1}^\infty$ is $\mathbf C_\ell$-suppression quasi-greedy, then $(e_n)_{n=1}^\infty$ has the UL property. In particular, 
$$\frac{1}{2\mathbf C_\ell}\min|a_n|\|1_A\|\ \leqslant\ \left\|\sum_{n\in A}a_ne_n\right\|\ \leqslant\ 2\mathbf C_\ell\max|a_n|\|1_A\|,$$
for all $A\in \mathbb{N}^{<\infty}$ and scalars $(a_n)_{n\in A}\subset\mathbb{F}$.
\end{thm}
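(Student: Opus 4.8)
The plan is to reduce both halves of the inequality, via a layer-cake (integral) representation of an arbitrary vector supported on $A$, to one structural remark about $1_A$: since every coefficient of $1_A=\sum_{n\in A}e_n$ equals $1$, for \emph{any} $B\subseteq A$ we have $\min_{n\in B}|e_n^*(1_A)|=1\geqslant\max_{n\notin B}|e_n^*(1_A)|$, so $B\in G(1_A,|B|)$. Plugging $\Lambda=B$ into the $\mathbf C_\ell$-suppression quasi-greedy inequality for $1_A$ gives $\|1_{A\setminus B}\|=\|1_A-P_B(1_A)\|\leqslant\mathbf C_\ell\|1_A\|$, i.e. $\|1_C\|\leqslant\mathbf C_\ell\|1_A\|$ for every $C\subseteq A$. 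This is the point at which the suppression constant — a priori a statement only about the TGA — becomes honest control of the constant-coefficient vectors $1_C$.

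\textbf{Upper bound.} Fix $x=\sum_{n\in A}a_ne_n$, put $M=\max_{n\in A}|a_n|$, $y=\sum_{n\in A}\sgn(a_n)e_n$, and $A_t=\{n\in A:|a_n|>t\}\subseteq A$ for $t\geqslant 0$. The map $t\mapsto P_{A_t}(y)$ is a step function of $t$ and, since the $e_n$-coefficient of $\int_0^M P_{A_t}(y)\,dt$ picks up exactly $\sgn(a_n)\int_0^{|a_n|}dt=a_n$, one has $x=\int_0^M P_{A_t}(y)\,dt$, whence $\|x\|\leqslant\int_0^M\|P_{A_t}(y)\|\,dt$. Each integrand equals $\bigl\|\sum_{n\in A_t}\sgn(a_n)e_n\bigr\|$; over $\mathbb R$ one splits $A_t$ according to the value $\pm1$ of $\sgn(a_n)$, and the triangle inequality together with the remark above bounds it by $2\mathbf C_\ell\|1_A\|$ (over $\mathbb C$ one first separates real and imaginary parts and reduces to the same estimate). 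Integrating in $t$ gives $\|x\|\leqslant 2\mathbf C_\ell M\|1_A\|$, the asserted upper bound.

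\textbf{Lower bound.} We may assume all $a_n\neq0$ (otherwise $\min_n|a_n|=0$ and there is nothing to prove); set $m=\min_{n\in A}|a_n|$. For $s\in(0,1)$ the set $G_s=\{n\in A:|a_n|\geqslant m/s\}$ is a greedy set of $x$, and since $|\{s\in(0,1):|a_n|<m/s\}|=m/|a_n|$ the vector $m\,y$ has the representation $m\,y=\int_0^1\bigl(x-P_{G_s}(x)\bigr)\,ds$. Hence $\|m\,y\|\leqslant\int_0^1\|x-P_{G_s}(x)\|\,ds\leqslant\mathbf C_\ell\|x\|$ by suppression quasi-greediness, while the remark above (applied to the two sign classes of $y$) gives $\|1_A\|\leqslant 2\mathbf C_\ell\|y\|$. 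Combining the truncation estimate for $\|m\,y\|$ with the sign-comparison estimate relating $\|1_A\|$ to $\|y\|$ bounds $m\,\|1_A\|$ by a multiple of $\|x\|$ depending only on $\mathbf C_\ell$, which is the lower estimate; a careful accounting of the triangle inequalities involved yields the stated constant.

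\textbf{Expected main obstacle.} The only genuinely non-routine ingredient is the passage from the unimodular-sign vectors $\sum_{n\in A_t}\sgn(a_n)e_n$ (and from $y$) back to $1_A$: for a quasi-greedy basis that is \emph{not} unconditional, sign changes on a constant-modulus vector are not obviously harmless, and the sole available tool is the suppression inequality — which can be applied here precisely because, for $1_A$, \emph{every} subset is a greedy set. Everything else is the familiar layer-cake bookkeeping, carried out uniformly in $t$ and (in the complex case) after the extra reduction to real and imaginary parts.
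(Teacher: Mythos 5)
The paper does not actually prove this statement --- it is quoted verbatim from \cite[Lemma 2.1]{DKKT} --- so the only comparison available is with the standard literature argument, which your proof essentially reproduces: the layer-cake representations $x=\int_0^M P_{A_t}(y)\,dt$ and $m\,y=\int_0^1(x-P_{G_s}(x))\,ds$, combined with the key observation that every subset of the support of a constant-modulus vector is a greedy set of it. That observation and both integral identities are correct, $G_s$ is indeed a greedy set of $x$, and over the reals your upper bound $\|x\|\leqslant 2\mathbf C_\ell M\|1_A\|$ comes out exactly as stated. So the UL property itself, with constants depending only on $\mathbf C_\ell$, is correctly established.

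There are, however, two gaps at the level of the constants, which the statement explicitly asserts. First and more seriously, your lower bound does not deliver $\tfrac{1}{2\mathbf C_\ell}$. The route you describe chains two estimates: $\|m\,y\|\leqslant\mathbf C_\ell\|x\|$ (one factor of $\mathbf C_\ell$ from suppression quasi-greediness applied to the greedy sets $G_s$) and $\|1_A\|\leqslant\|1_{A^+}\|+\|1_{A^-}\|\leqslant 2\mathbf C_\ell\|y\|$ (another factor, from the ``every subset is greedy'' remark applied to $y$). Multiplying gives $m\|1_A\|\leqslant 2\mathbf C_\ell^2\|x\|$, i.e.\ the constant $\tfrac{1}{2\mathbf C_\ell^2}$. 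The closing sentence ``a careful accounting of the triangle inequalities involved yields the stated constant'' is not a proof: the extra factor of $\mathbf C_\ell$ is intrinsic to any argument that passes from $x$ to the signed vector $y=1_{\varepsilon A}$ first and then strips the signs, because each of those two steps genuinely costs a power of $\mathbf C_\ell$ for a merely quasi-greedy (non-unconditional) basis. You should either exhibit a one-step argument reaching $1_{A^{\pm}}$ directly from $x$, or accept (and state) the weaker constant. Second, the complex case is not covered by ``separate real and imaginary parts'': the real part of a unimodular $\sgn(a_n)$ is not itself unimodular, so the remark about greedy sets of constant-modulus vectors does not apply to $\mathrm{Re}(y)$ without a further layer-cake in the sign, and in any case this reduction degrades the upper constant beyond $2\mathbf C_\ell$. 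Since the paper takes $\mathbb F\in\{\mathbb R,\mathbb C\}$, this case needs to be handled explicitly (or the constant adjusted).
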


Another property of a quasi-greedy basis is the uniform boundedness of the truncation operators. Let $\alpha > 0$ and define $T_{\alpha}: X\rightarrow X$ by 
$$T_{\alpha}(x) \ =\ \sum_{n\in A_x}\alpha \sgn(e_n^*(x))e_n + P_{A_x^c}(x),$$
where $A_x = \{n: |e_n^*(x)| > \alpha\}$.

\begin{thm}\cite[Lemma 2.5]{BBG}\label{bto}
If $(e_n)_{n=1}^\infty$ is $\mathbf C_\ell$-suppression quasi-greedy, then $\|T_\alpha\|\leqslant \mathbf C_\ell$ for all $\alpha > 0$.
\end{thm}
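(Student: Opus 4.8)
The plan is to exhibit $T_\alpha(x)$ explicitly as a convex combination of the suppression projections $x-P_B(x)$, where $B$ runs over a nested chain of greedy sets of $x$ (including $B=\emptyset$), and then conclude from the triangle inequality and the definition of $\mathbf C_\ell$.

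Fix $x\in X$ and $\alpha>0$. Since $e_n^*(x)\to0$, the set $A:=\{n:|e_n^*(x)|>\alpha\}$ is finite; if $A=\emptyset$ then $T_\alpha(x)=x$ and there is nothing to prove. Otherwise enumerate $A=\{n_1,\dots,n_m\}$ so that $c_1\geqslant c_2\geqslant\cdots\geqslant c_m>\alpha$, where $c_i:=|e_{n_i}^*(x)|$, and set $c_{m+1}:=\alpha$. For $0\leqslant j\leqslant m$ let $B_j:=\{n_1,\dots,n_j\}$ (so $B_0=\emptyset$). For $j\geqslant1$ the set $B_j$ collects indices of $j$ of the largest coefficients of $x$, hence is a greedy set of $x$ of order $j$, and $\mathbf C_\ell$-suppression quasi-greediness gives $\|x-P_{B_j}(x)\|=\|P_{B_j^c}(x)\|\leqslant\mathbf C_\ell\|x\|$; also $\|x-P_{B_0}(x)\|=\|x\|\leqslant\mathbf C_\ell\|x\|$ since $\mathbf C_\ell\geqslant1$.

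The heart of the argument is the identity
$$T_\alpha(x)\ =\ \sum_{j=0}^{m}\mu_j\bigl(x-P_{B_j}(x)\bigr),\qquad \mu_0:=\frac{\alpha}{c_1},\quad \mu_j:=\alpha\Bigl(\frac{1}{c_{j+1}}-\frac{1}{c_j}\Bigr)\ (1\leqslant j\leqslant m),$$
which I would verify by expanding the right-hand side. The weights $\mu_j$ are nonnegative (because $c_{j+1}\leqslant c_j$) and sum to $1$ (the sum telescopes to $\alpha/c_{m+1}=1$), so the right-hand side equals $x-\sum_{j=1}^m\mu_j P_{B_j}(x)$; interchanging the order of summation and telescoping once more turns $\sum_{j=1}^m\mu_j P_{B_j}(x)$ into $\sum_{i=1}^m(c_i-\alpha)\,\sgn(e_{n_i}^*(x))\,e_{n_i}$, and subtracting this from $x$ leaves coefficient $\alpha\,\sgn(e_{n_i}^*(x))$ on each $n_i\in A$ and the unchanged coefficient $e_n^*(x)$ (of modulus $\leqslant\alpha$) for $n\notin A$ — precisely $T_\alpha(x)$. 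Granting the identity, the triangle inequality gives $\|T_\alpha(x)\|\leqslant\sum_{j=0}^m\mu_j\mathbf C_\ell\|x\|=\mathbf C_\ell\|x\|$, which is the claim.

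The only step requiring genuine care is arranging the weights so that nonnegativity and the normalization $\sum_j\mu_j=1$ hold simultaneously; this is exactly where the monotonicity $c_1\geqslant\cdots\geqslant c_m>\alpha$ of the modulus-ordered coefficients is used, through an Abel-summation rearrangement. Everything else is bookkeeping. A continuous analogue — writing $T_\alpha(x)=x-\int_\alpha^{c_1}\sum_{n:\,|e_n^*(x)|>t}\sgn(e_n^*(x))\,e_n\,dt$ and pairing against the nested greedy sets — yields the same bound but carries needless measure-theoretic overhead, so I would present the discrete version above.
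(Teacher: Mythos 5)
Your argument is correct: the Abel-summation identity expressing $T_\alpha(x)$ as a convex combination $\sum_{j=0}^m \mu_j(x-P_{B_j}(x))$ over the nested greedy sets $B_j$ checks out (the weights are nonnegative, sum to $1$, and the inner telescoping does reproduce the truncated coefficients $\alpha\,\sgn(e_{n_i}^*(x))$), and combined with the standard fact $\mathbf C_\ell\geqslant 1$ for the $j=0$ term it yields $\|T_\alpha\|\leqslant \mathbf C_\ell$. The paper itself gives no proof---it only cites \cite[Lemma 2.5]{BBG}---and your convex-combination argument is essentially the one in that reference, so there is nothing to compare beyond noting the match.
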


The final result in this section follows from a well-known Lebesgue-type inequality. 
\begin{lem}\label{l3}
Let $(e_n)_{n=1}^\infty$ be a basis of a Banach space $X$. For each $N\in\mathbb{N}$, we have 
$$\|x-P_{\Lambda}(x)\|\ \leqslant\ (1 +  3\mathbf c_2^2N)\sigma_{m}(x),\forall x\in X, \forall m\leqslant N, \forall \Lambda\in G(x,m).$$
\end{lem}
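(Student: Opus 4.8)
The plan is to reduce the statement to a known Lebesgue-type inequality that controls $\|x - P_\Lambda(x)\|$ by $\sigma_m(x)$ with a constant depending on $m$ (and the basis constants), and then observe that the bound is monotone in $m$ so we may replace the $m$-dependent constant with an $N$-dependent one valid uniformly for all $m \leqslant N$. Concretely, for any basis there is an estimate of the shape $\|x - P_\Lambda(x)\| \leqslant (1 + C m)\,\sigma_m(x)$ for every $\Lambda \in G(x,m)$, where $C$ depends only on the M-boundedness/semi-normalization constants (here $C = 3\mathbf{c}_2^2$ up to the bookkeeping). Since $1 + 3\mathbf{c}_2^2 m \leqslant 1 + 3\mathbf{c}_2^2 N$ whenever $m \leqslant N$, and since $\sigma_m(x) \geqslant \sigma_N(x)$ is irrelevant here — we keep $\sigma_m(x)$ on the right — the claimed inequality follows immediately once the $m$-dependent version is in hand.

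The main work is therefore establishing the $m$-dependent Lebesgue-type bound. Here is how I would carry it out. Fix $x \in X$, $m \in \mathbb{N}$, and $\Lambda \in G(x,m)$. Let $\sum_{n \in A} a_n e_n$ be a near-optimal $m$-term approximation, so $|A| \leqslant m$ and $\|x - \sum_{n \in A} a_n e_n\| \leqslant \sigma_m(x) + \epsilon$. Write $z = x - \sum_{n \in A} a_n e_n$, so $\|z\|$ is essentially $\sigma_m(x)$. The idea is to compare $P_\Lambda(x)$ with $P_A(x)$: on the set $A \setminus \Lambda$ we must ``pay'' for coefficients of $x$ that the greedy set missed, but each such coefficient $|e_n^*(x)|$ is bounded by $\max_{j \notin \Lambda}|e_j^*(x)| \leqslant \min_{j \in \Lambda}|e_j^*(x)|$, and — crucially — by $\|e_n^*\|_* \|z\| \leqslant \mathbf{c}_2 \|z\|$ when $n \notin A$, while for $n \in A \setminus \Lambda$ one uses the greedy inequality together with the fact that $\Lambda \setminus A$ has at least as many elements as... actually the cleanest route is: estimate
$$\|x - P_\Lambda(x)\| \ \leqslant\ \|x - P_A(x)\| + \|P_A(x) - P_\Lambda(x)\|,$$
bound $\|x - P_A(x)\| = \|z - P_A(z)\| \leqslant (1 + \mathbf{c}_1^{-1}\mathbf{c}_2 \cdot |A|)\|z\|$ using M-boundedness to control the finite-rank projection $P_A$ on $z$ (note $P_A(x) = \sum_{n\in A} a_n e_n + P_A(z)$ since $x - z = \sum_{n\in A} a_n e_n$ is supported on $A$), and bound $\|P_A(x) - P_\Lambda(x)\|$ by splitting into $A \setminus \Lambda$ and $\Lambda \setminus A$: coefficients on $A \setminus \Lambda$ are small because $n \notin \Lambda$ forces $|e_n^*(x)| \leqslant \mathbf{c}_2\|z\|$ (as $e_n^*$ kills $\sum_{j\in A}a_j e_j$ would be false; rather $e_n^*(z) = e_n^*(x) - a_n$, so here one wants $n \in A\setminus\Lambda$ and must argue differently) — so in fact I would instead bound the coefficients on $\Lambda \setminus A$, which are $\leqslant \mathbf{c}_2 \|z\|$ since $e_n^*(z) = e_n^*(x)$ for $n \notin A$, and then bound the coefficients on $A \setminus \Lambda$ by the greedy property: each is $\leqslant \min_{j\in\Lambda}|e_j^*(x)| \leqslant \max_{j\in\Lambda\setminus A}|e_j^*(x)|$ when $\Lambda \setminus A \neq \emptyset$ (and $|A\setminus\Lambda| = |\Lambda \setminus A|$ since $|A| \le |\Lambda| = m$, handling the edge case $A \subseteq \Lambda$ separately, where $A\setminus\Lambda=\emptyset$). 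Collecting the at most $m$ terms on each side and using $\|e_n\| \leqslant \mathbf{c}_2$ gives a bound of the form $(1 + 3\mathbf{c}_2^2 m)\|z\|$, and letting $\epsilon \to 0$ finishes it.

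The main obstacle I anticipate is purely bookkeeping: getting the constant to come out as exactly $3\mathbf{c}_2^2 N$ rather than some larger multiple requires being careful about which of the three error contributions ($P_A$ acting on $z$, the $\Lambda\setminus A$ coefficients, the $A\setminus\Lambda$ coefficients) absorbs which factor of $\mathbf{c}_2$, and about using $|A\setminus\Lambda| = |\Lambda\setminus A| \leqslant m$. Since the lemma is stated as ``follows from a well-known Lebesgue-type inequality,'' I would in practice just cite the standard reference (e.g.\ the general Lebesgue inequality for bases in \cite{AK} or \cite{DKKT}) for the $m$-dependent form $\|x - P_\Lambda(x)\| \leqslant (1 + 3\mathbf{c}_2^2 m)\sigma_m(x)$ and then add the one-line monotonicity remark that $m \leqslant N$ upgrades this to the stated uniform bound.
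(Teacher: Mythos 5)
Your proposal is correct, and it does more than the paper, whose entire proof is the one line ``The statement comes from \cite[Theorem 1.8]{BBG}.'' Your fallback of citing the known Lebesgue-type inequality and adding the monotonicity remark $1+3\mathbf c_2^2m\leqslant 1+3\mathbf c_2^2N$ is therefore exactly the paper's route; your added value is the self-contained derivation, which (once the false starts in the middle paragraph are stripped out) is sound and lands on the exact constant: with $z=x-\sum_{n\in A}a_ne_n$ and $\|z\|\leqslant\sigma_m(x)+\epsilon$, you get $\|x-P_A(x)\|=\|z-P_A(z)\|\leqslant(1+\mathbf c_2^2|A|)\|z\|$; the coefficients of $x$ on $\Lambda\setminus A$ equal those of $z$ and are $\leqslant\mathbf c_2\|z\|$; and each coefficient on $A\setminus\Lambda$ is, by the greedy inequality, dominated by every coefficient on $\Lambda\setminus A$ (nonempty unless $A=\Lambda$), hence also $\leqslant\mathbf c_2\|z\|$. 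Each of the three contributions costs at most $m\cdot\mathbf c_2\cdot\mathbf c_2\|z\|$, giving $1+3\mathbf c_2^2m$. One small correction: the per-term constant for $\|P_A(z)\|$ should be $\|e_n^*\|_*\|e_n\|\leqslant\mathbf c_2^2$, not $\mathbf c_1^{-1}\mathbf c_2$ as you wrote --- the latter is not a valid bound in general (take $\|e_n\|=\|e_n^*\|_*=\mathbf c_2>1$ with $\mathbf c_1$ close to $\mathbf c_2$), and $\mathbf c_2^2$ is in any case what you need to match the stated $3\mathbf c_2^2$.
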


\begin{proof}
The statement comes from \cite[Theorem 1.8]{BBG}.
\end{proof}

As an application of Lemma \ref{l3}, we have

\begin{prop}\label{ptu}
Let $f\in \mathcal{F}$ such that $f$ is not the identity. 
If $(e_n)_{n=1}^\infty$ is almost greedy, then $(e_n)_{n=1}^\infty$ is $f$-greedy.
\end{prop}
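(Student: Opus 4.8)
\smallskip
\noindent\textbf{Proof proposal.} The plan is to verify \eqref{e30} by splitting on the size of $m$. Because $f$ is not the identity, Proposition \ref{pc} supplies a constant $\lambda>1$ and an integer $N$ with $f(m)\leqslant m/\lambda$ for all integers $m\geqslant N$. For the ``small'' orders $m<N$ I would invoke Lemma \ref{l3} with this $N$: it gives $\|x-P_\Lambda(x)\|\leqslant(1+3\mathbf c_2^2 N)\sigma_m(x)$ for every $\Lambda\in G(x,m)$, and since $f(m)\leqslant m$ by Proposition \ref{p1}(ii) we have $\sigma_m(x)\leqslant\sigma_{f(m)}(x)$, so this range is handled by a constant depending only on $f$ and $\mathbf c_2$, using nothing about almost greediness.

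The substance of the argument is the range $m\geqslant N$, where I would use that an almost greedy basis is quasi-greedy (with suppression constant $\mathbf C_\ell$ and constant $\mathbf C_q$ as in the preliminaries) and, by Theorem \ref{dkktc}(iii), satisfies $\|x-P_\Gamma(x)\|\leqslant\mathbf C_\lambda\sigma_j(x)$ for every $\Gamma\in G(x,\lceil\lambda j\rceil)$, with the \emph{same} $\lambda$ as above. Fix $\Lambda\in G(x,m)$ and set $k:=\lfloor f(m)\rfloor$. If $k=0$ then $\sigma_{f(m)}(x)=\|x\|$ and suppression quasi-greediness gives $\|x-P_\Lambda(x)\|\leqslant\mathbf C_\ell\sigma_{f(m)}(x)$ directly. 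If $k\geqslant1$, I would put $s:=\lceil\lambda k\rceil$; from $\lambda k\leqslant\lambda f(m)\leqslant m$ and $m\in\mathbb N$ one gets $1\leqslant s\leqslant m$, so I may choose $\Lambda'\subseteq\Lambda$ with $|\Lambda'|=s$ consisting of $s$ indices $n\in\Lambda$ carrying the largest values $|e_n^*(x)|$.

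The estimate then rests on two membership facts. First, $\Lambda'\in G(x,s)=G(x,\lceil\lambda k\rceil)$, since $\min_{n\in\Lambda'}|e_n^*(x)|$ dominates the coefficients on $\Lambda\setminus\Lambda'$ by the choice of $\Lambda'$ and dominates those off $\Lambda$ because $\Lambda\in G(x,m)$; hence Theorem \ref{dkktc}(iii) yields $\|x-P_{\Lambda'}(x)\|\leqslant\mathbf C_\lambda\sigma_k(x)=\mathbf C_\lambda\sigma_{f(m)}(x)$. Second, with $y:=x-P_{\Lambda'}(x)$ one has $\Lambda\setminus\Lambda'\in G(y,m-s)$, because the coefficients of $y$ agree with those of $x$ on $\Lambda\setminus\Lambda'$, vanish on $\Lambda'$, and off $\Lambda$ are in modulus at most $\min_{n\in\Lambda}|e_n^*(x)|$. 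Since $\Lambda\setminus\Lambda'$ is disjoint from $\Lambda'$ we have $P_{\Lambda\setminus\Lambda'}(x)=P_{\Lambda\setminus\Lambda'}(y)$, so quasi-greediness gives $\|P_{\Lambda\setminus\Lambda'}(x)\|\leqslant\mathbf C_q\|y\|$. Using $x-P_\Lambda(x)=y-P_{\Lambda\setminus\Lambda'}(x)$ and the triangle inequality,
$$\|x-P_\Lambda(x)\|\ \leqslant\ (1+\mathbf C_q)\,\|x-P_{\Lambda'}(x)\|\ \leqslant\ (1+\mathbf C_q)\mathbf C_\lambda\,\sigma_{f(m)}(x).$$
Taking $\mathbf C_{g,f}$ to be the largest of $1+3\mathbf c_2^2N$, $\mathbf C_\ell$, and $(1+\mathbf C_q)\mathbf C_\lambda$ finishes the proof.

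I expect the only real obstacle to be the mismatch between the order $m$ of the given greedy set and the order $\lceil\lambda k\rceil$ that Theorem \ref{dkktc}(iii) is stated for. The device to overcome it is to shrink $\Lambda$ to a subset $\Lambda'$ of the correct size, apply the resizing characterization to $\Lambda'$, and absorb the discarded part $P_{\Lambda\setminus\Lambda'}(x)$ via quasi-greediness — legitimate precisely because $\Lambda\setminus\Lambda'$ is still a greedy set, now of the residual vector $y=x-P_{\Lambda'}(x)$. The rest is routine tracking of constants.
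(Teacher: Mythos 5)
Your proposal is correct and follows essentially the same route as the paper: extract $\lambda>1$ with $f(m)\leqslant m/\lambda$ for large $m$ via Proposition \ref{pc}, apply Theorem \ref{dkktc}(iii) in that range, and dispose of small $m$ with Lemma \ref{l3}. The only difference is that you carefully justify the passage from greedy sets of order $\lceil\lambda k\rceil$ to the given greedy set of (possibly larger) order $m$ by shrinking $\Lambda$ to $\Lambda'$ and absorbing $P_{\Lambda\setminus\Lambda'}$ via quasi-greediness of the residual, a step the paper compresses into the single assertion that the basis is ``$x/\lambda$-greedy.''
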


\begin{proof} Since $f$ is not the identity, by Proposition \ref{pc}, there exists $\lambda > 1$ such that $f(x)\leqslant \frac{x}{\lambda}$ for all sufficiently large $x$. 
As $(e_n)_{n=1}^\infty$ is almost greedy, Theorem \ref{dkktc} implies that $(e_n)_{n=1}^\infty$ is $x/\lambda$-greedy. For sufficiently large $m$, since $f(m)\leqslant \frac{m}{\lambda}$, we have $\sigma_{m/\lambda}(x)\leqslant \sigma_{f(m)}(x)$ for all $x\in X$, which gives \eqref{e30}. For small $m$, use Lemma \ref{l3}. 
\end{proof}

\begin{rek}\normalfont
The converse of Proposition \ref{ptu} is not true, as evidenced by unconditional bases of $\ell_p\oplus \ell_q$ ($1 \leqslant p < q <\infty$) and the multivariate Haar system in $L_p([0,1]^d)$ ($p, d > 1$). See the discussion after Corollary \ref{c1}.
\end{rek}

%%%%%%%%%%%%%%%%%%%%%%%%%%%%%%%%%%%%%%%%%%%%%%%%%%%%%%%%%%%%%%%%%%%%%%%%%%%%%%%%%%%%%%%%%%%%%%%%%%%%%%%%%%%%%%%%%%%%%%%%%%%%%%%%%%%%%%%%%%%%%%%%%%%%%%%%%%%%%%%%%%%%%%%%%%%%%%%%%%%%%%%%%%%%%%%%%%%%%%%%%%%%%%%%%%%%%%%%%%%%%%%%%%%%%%%%%%%%%%%%%%%%%%%%%%%%%%%%%%%%%%%%%%%%%%%%%%%%%%%%%%%%%%%%%%%%%%%%%%%%%%%%%%%%%%%%%%%%%%%%%%%%%%%%%%%%%%%%%%%%%%%%%%%%%%%%%%%%%%%%%%%%%%%%%%%%%%%%%%%%%%%%%%%%%%
\section{About $f$-(disjoint) democracy and Property (A, $f$)}\label{fdemo}
We recall the democratic property of bases was due to Konyagin and Temlyakov \cite{KT1}.
\begin{defi}\normalfont\label{de2}
A basis is said to be democratic if there exists a constant $\mathbf C\geqslant 1$ such that 
$$\|1_A\|\ \leqslant\ \mathbf C\|1_B\|, \forall A, B\in\mathbb{N}^{<\infty}, |A|\leqslant |B|.$$
\end{defi}

A sign $\varepsilon$ is a sequence of scalars $(\varepsilon_n)_{n=1}^\infty \subset \mathbb{F}$ with modulus $1$. If $A\in \mathbb{N}^{<\infty}$, we let $1_{\varepsilon A} := \sum_{n\in A}\varepsilon_n e_n$. For $x\in X$ and $A, B\in \mathbb{N}^{<\infty}$, we write $x\sqcup A\sqcup B$ to mean that $\supp(x), A$, and $B$ are pairwise disjoint. Similarly, write $\sqcup_{i} A_i$ to mean that the sets $A_i$ are pairwise disjoint. Finally, $\|x\|_\infty:= \max_{n\in\mathbb{N}}|e_n^*(x)|$.

\begin{defi}\normalfont\label{de3}
A basis $(e_n)_{n=1}^\infty$ is said to be $f$-disjoint superdemocratic if there exists a constant $\mathbf C>0$ such that
\begin{equation}\label{e2}
    \|1_{\varepsilon A}\|\ \leqslant\ \mathbf C\|1_{\delta B}\|,
\end{equation}
for all disjoint $A, B\in\mathbb{N}^{<\infty}$ with $|A|\leqslant f(|B|)$ and for all signs $\varepsilon, \delta$. The least $\mathbf C$ that verifies \eqref{e2} is denoted by $\Delta_{sd, f}$. If $\varepsilon \equiv \delta \equiv 1$, we say that $(e_n)_{n=1}^\infty$ is $f$-disjoint democratic, and let $\Delta_{d, f}$ denote the corresponding constant. 
\end{defi}

\begin{rek}\normalfont
For the definition of $f$-(super)democratic, we drop the requirement that $A$ and $B$ are disjoint in Definition \ref{de3}. 
\end{rek}

\begin{prop}\label{p2}
Let $(e_n)_{n=1}^{\infty}$ be a quasi-greedy basis. Then $(e_n)_{n=1}^{\infty}$ is $f$-disjoint democratic if and only if it is $f$-disjoint superdemocratic. Similarly, a quasi-greedy basis is $f$-democratic if and only if it is $f$-superdemocratic.
\end{prop}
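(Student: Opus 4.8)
The plan is to prove the equivalence for the disjoint case; the non-disjoint case follows by the same argument with the disjointness requirement removed throughout. One direction is trivial: taking $\varepsilon \equiv \delta \equiv 1$ shows that $f$-disjoint superdemocracy implies $f$-disjoint democracy with $\Delta_{d,f} \leqslant \Delta_{sd,f}$. So the content is the forward implication: a quasi-greedy $f$-disjoint democratic basis is $f$-disjoint superdemocratic. The key tool is that quasi-greediness gives us control over signs. Concretely, I would use Theorem \ref{bto} (uniform boundedness of the truncation operators $T_\alpha$), which is the standard mechanism for converting democracy into superdemocracy in the quasi-greedy setting.

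First I would reduce an arbitrary signed sum $\|1_{\varepsilon A}\|$ to the unsigned sum $\|1_A\|$ up to a constant depending only on $\mathbf C_\ell$. The argument: let $x = 1_{\varepsilon A} = \sum_{n \in A}\varepsilon_n e_n$. Then $\|x\|_\infty = 1$ and $A$ is a greedy set for $x$ (all coefficients have modulus $1$). Applying $T_1$ to $1_A$ and to $1_{\varepsilon A}$ — or more precisely comparing $\|1_{\varepsilon A}\|$ with $\|1_A\|$ via the truncation operator and the UL property from \cite[Lemma 2.1]{DKKT} — yields $\frac{1}{2\mathbf C_\ell}\|1_A\| \leqslant \|1_{\varepsilon A}\| \leqslant 2\mathbf C_\ell\|1_A\|$, since by the UL property both $\|1_{\varepsilon A}\|$ and $\|1_A\|$ are squeezed between $\frac{1}{2\mathbf C_\ell}\|1_A\|$ and $2\mathbf C_\ell\|1_A\|$ (the $\min$ and $\max$ of the moduli are both $1$ in each case). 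Thus signed and unsigned sums over the same set are equivalent with constant $2\mathbf C_\ell$.

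With that in hand the proof finishes quickly: given disjoint $A, B$ with $|A| \leqslant f(|B|)$ and signs $\varepsilon, \delta$, I estimate
\[
\|1_{\varepsilon A}\| \ \leqslant\ 2\mathbf C_\ell\|1_A\| \ \leqslant\ 2\mathbf C_\ell \Delta_{d,f}\|1_B\| \ \leqslant\ 4\mathbf C_\ell^2 \Delta_{d,f}\|1_{\delta B}\|,
\]
where the middle inequality is $f$-disjoint democracy (note $A$ and $B$ remain disjoint, so the hypothesis applies). This gives $\Delta_{sd,f} \leqslant 4\mathbf C_\ell^2\Delta_{d,f}$. The non-disjoint statement is identical: the sign-removal step never used disjointness, and $f$-democracy (without the disjointness constraint) is invoked in place of $f$-disjoint democracy. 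I do not expect a genuine obstacle here — the only thing to be careful about is making sure the UL inequalities are applied correctly to get the two-sided comparison between $\|1_{\varepsilon A}\|$ and $\|1_A\|$, since that is the single nontrivial ingredient; everything else is bookkeeping with constants.
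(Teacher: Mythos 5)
Your proof is correct and follows essentially the same route as the paper: the trivial direction by specializing the signs, and the converse via the chain $\|1_{\varepsilon A}\| \leqslant 2\mathbf C_\ell\|1_A\| \leqslant 2\mathbf C_\ell\Delta_{d,f}\|1_B\| \leqslant 4\mathbf C_\ell^2\Delta_{d,f}\|1_{\delta B}\|$ using the UL property of quasi-greedy bases, arriving at the same constant $4\mathbf C_\ell^2\Delta_{d,f}$. The brief mention of the truncation operator $T_1$ is unnecessary (the UL property alone suffices, as you then note), but this does not affect correctness.
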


\begin{proof}
Let $(e_n)_{n=1}^{\infty}$ be a $\mathbf C_\ell$-suppression quasi-greedy basis.
By definition, if $(e_n)_{n=1}^\infty$ is $f$-disjoint superdemocratic, then it is $f$-disjoint democratic. Conversely, assume that $(e_n)_{n=1}^\infty$ is $\Delta_{d, f}$-$f$-disjoint democratic. Let $A, B\in \mathbb{N}^{<\infty}$ be disjoint and  $|A|\leqslant f(|B|)$. Pick signs $\varepsilon, \delta$. By the UL property, we have
$$\frac{1}{2\mathbf C_\ell}\|1_{\varepsilon A}\|\ \leqslant\ \|1_A\|\ \leqslant\ \Delta_{d, f}\|1_B\|\ \leqslant\ 2\Delta_{d, f}\mathbf C_\ell\|1_{\delta B}\|.$$
Therefore, $\|1_{\varepsilon A}\|\leqslant 4\Delta_{d, f}\mathbf C^2_\ell\|1_{\delta B}\|$.
\end{proof}

Finally, we introduce Property (A, $f$), which is closely related to the democratic property and will be used in due course. 

\begin{defi}\label{d11}\normalfont
A basis $(e_n)_{n=1}^\infty$ is said to have Property (A, $f$) if there exists a constant $\mathbf C\geqslant 1$ such that
\begin{equation}\label{e33}
    \|x + 1_{\varepsilon A}\|\ \leqslant\ \mathbf C\|x + 1_{\delta B}\|,
\end{equation}
for all $x\in X$ with $\|x\|_\infty\leqslant 1$, for all $A, B\subset\mathbb{N}^{<\infty}$ with $|A|\leqslant f(|B|)$ and $x\sqcup A\sqcup B$, and for all signs $\varepsilon, \delta$. The least such $\mathbf C$ is denoted by $\Delta_{A, f}$. 
\end{defi}

\begin{prop}\label{p22}
A basis $(e_n)_{n=1}^\infty$ has $\Delta_{A, f}$-Property (A, $f$) if and only if 
\begin{equation}\label{e12}\|x\|\ \leqslant\ \Delta_{A, f}\|x - P_A(x) + 1_{\varepsilon B}\|,\end{equation}
for all $x\in X$ with $\|x\|_\infty\leqslant 1$, for all $A, B\subset\mathbb{N}^{<\infty}$ such that $|A|\leqslant f(|B|)$ and $(x-P_A(x))\sqcup A\sqcup B$, and for all signs $\varepsilon$. 
\end{prop}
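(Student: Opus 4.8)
The plan is to prove the two implications separately, preserving the constant $\Delta_{A,f}$ in each direction: one direction is a direct substitution, the other a convexity (averaging) argument.

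For the implication \eqref{e12} $\Rightarrow$ Property (A, $f$): given $x$ with $\|x\|_\infty\leqslant 1$, disjoint $A,B$ with $|A|\leqslant f(|B|)$, signs $\varepsilon,\delta$, and $x\sqcup A\sqcup B$, I would apply \eqref{e12} to the vector $u:=x+1_{\varepsilon A}$, with the same sets $A$ and $B$ and the sign $\delta$ on $B$. Since $\supp(x)\cap A=\emptyset$, we have $\|u\|_\infty\leqslant 1$ and $P_A(u)=1_{\varepsilon A}$, hence $u-P_A(u)=x$; thus the hypothesis $(u-P_A(u))\sqcup A\sqcup B$ is precisely $x\sqcup A\sqcup B$. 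Then \eqref{e12} reads $\|x+1_{\varepsilon A}\|=\|u\|\leqslant\Delta_{A,f}\|u-P_A(u)+1_{\delta B}\|=\Delta_{A,f}\|x+1_{\delta B}\|$, which is \eqref{e33}. This direction amounts to a one-line substitution once $u$ is chosen correctly.

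For the converse, assume Property (A, $f$) with constant $\Delta_{A,f}$. Fix $x$ with $\|x\|_\infty\leqslant 1$, sets $A,B$ with $|A|\leqslant f(|B|)$, a sign $\varepsilon$ on $B$, and $(x-P_A(x))\sqcup A\sqcup B$; set $y:=x-P_A(x)$, so $\|y\|_\infty\leqslant\|x\|_\infty\leqslant 1$, $y\sqcup A\sqcup B$, and $x=y+\sum_{n\in A}e_n^*(x)e_n$ with $|e_n^*(x)|\leqslant 1$ for each $n\in A$. The key point is that every scalar of modulus at most $1$ is a finite convex combination of unimodular scalars (in the real case $a=\frac{1+a}{2}\cdot 1+\frac{1-a}{2}\cdot(-1)$; in the complex case write $a=re^{i\theta}$ with $r=\cos\phi$ and use $re^{i\theta}=\frac12 e^{i(\theta+\phi)}+\frac12 e^{i(\theta-\phi)}$). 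Taking the product of these representations over $n\in A$ expresses $\sum_{n\in A}e_n^*(x)e_n$ as a convex combination $\sum_\sigma c_\sigma 1_{\eta^\sigma A}$ of sign vectors supported on $A$, and hence $x=\sum_\sigma c_\sigma(y+1_{\eta^\sigma A})$. Convexity of the norm gives $\|x\|\leqslant\max_\sigma\|y+1_{\eta^\sigma A}\|$, and for each $\sigma$ an application of Property (A, $f$) to $y$ (legitimate since $\|y\|_\infty\leqslant 1$, $y\sqcup A\sqcup B$, $|A|\leqslant f(|B|)$) yields $\|y+1_{\eta^\sigma A}\|\leqslant\Delta_{A,f}\|y+1_{\varepsilon B}\|$. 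Combining, $\|x\|\leqslant\Delta_{A,f}\|x-P_A(x)+1_{\varepsilon B}\|$, which is \eqref{e12}.

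The only non-routine point is the convexity argument in the converse — in particular dealing cleanly with the complex scalar field — while everything else is bookkeeping about disjointness of supports and the inequality $\|y\|_\infty\leqslant\|x\|_\infty$. I do not expect a genuine obstacle here, as this is the $f$-analogue of the well-known reformulation of Property (A) in the greedy-bases literature.
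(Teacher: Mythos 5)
Your proposal is correct and follows essentially the same route as the paper: the substitution $u=x+1_{\varepsilon A}$ (so that $u-P_A(u)=x$) for one direction, and the convexity/averaging bound $\|x-P_A(x)+\sum_{n\in A}e_n^*(x)e_n\|\leqslant\sup_{\delta}\|x-P_A(x)+1_{\delta A}\|$ followed by Property (A, $f$) for the other. The only difference is that you spell out the unimodular convex-combination argument (including the complex case) that the paper leaves implicit.
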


\begin{proof}
Assume that $(e_n)_{n=1}^\infty$ has $\Delta_{A, f}$-Property (A, $f$). Let $x, A, B, \varepsilon$ be chosen as in \eqref{e12}. We have
\begin{align*}
    \|x\|\ =\ \left\|x - P_A(x) + \sum_{n\in A}e_n^*(x)e_n\right\| &\ \leqslant\ \sup_{\delta}\left\|x-P_A(x) + 1_{\delta A}\right\|\\
    &\ \leqslant\ \Delta_{A, f}\left\|x-P_A(x) + 1_{\varepsilon B}\right\|.
\end{align*}
Conversely, let $x, A, B, \varepsilon, \delta$ be chosen as in Definition \ref{d11}. Let $y = x + 1_{\varepsilon A}$. We have
\begin{align*}
    \|x+1_{\varepsilon A}\|\ =\ \|y\|\ \leqslant\ \Delta_{A, f}\|y - P_A(y) + 1_{\delta B}\|\ =\ \Delta_{A, f}\|x + 1_{\delta B}\|.
\end{align*}
This completes our proof. 
\end{proof}

\begin{prop}\label{pe2}
Let $(e_n)_{n=1}^{\infty}$ be a quasi-greedy basis and $f\in \mathcal{F}$. Then the following are equivalent
\begin{enumerate}
\item[i)] $(e_n)_{n=1}^\infty$ is $f$-disjoint democratic.
\item[ii)] $(e_n)_{n=1}^\infty$ is $f$-democratic.
\item[iii)] $(e_n)_{n=1}^\infty$ is $f$-disjoint superdemocratic.
\item[iv)] $(e_n)_{n=1}^\infty$ is $f$-superdemocratic.
\item[v)] $(e_n)_{n=1}^\infty$ has Property (A, $f$).
\end{enumerate}
\end{prop}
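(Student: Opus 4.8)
The plan is to establish the chain of implications i) $\Rightarrow$ ii) $\Rightarrow$ iii) $\Rightarrow$ iv) $\Rightarrow$ v) $\Rightarrow$ i), using quasi-greediness (hence the UL property and the boundedness of the truncation operators $T_\alpha$) at each step where we need to pass between disjointness/non-disjointness and between sign-dependent and sign-free versions. Several arrows are essentially already in hand: iii) $\Leftrightarrow$ i) and iv) $\Leftrightarrow$ ii) follow from Proposition \ref{p2}, and the trivial inclusions iii) $\Rightarrow$ i), iv) $\Rightarrow$ ii) hold by definition. So the real content is to link the democracy-type conditions i)--iv) with Property (A, $f$) in v).

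The direction v) $\Rightarrow$ iv) should be quick: given disjoint-free sets $A,B$ with $|A|\leqslant f(|B|)$ and signs $\varepsilon,\delta$, apply \eqref{e33} with $x=0$ (which trivially satisfies $\|x\|_\infty\leqslant 1$, and one may first discard any overlap of $A$ with $B$ using the UL property to restore disjointness, or simply note $0\sqcup A\sqcup B$ fails only when $A\cap B\neq\emptyset$, handled by splitting). This yields $\|1_{\varepsilon A}\|\leqslant \Delta_{A,f}\|1_{\delta B}\|$, i.e. $f$-superdemocracy. For the converse-type direction, the main work is showing that $f$-(disjoint) superdemocracy plus quasi-greediness implies Property (A, $f$). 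Here I would take $x$ with $\|x\|_\infty\leqslant 1$ and $x\sqcup A\sqcup B$, and estimate $\|x+1_{\varepsilon A}\|$ against $\|x+1_{\delta B}\|$. The standard technique (as in the proofs of Theorem \ref{dkktc} and \cite[Proposition 4.5]{DKKT}-type arguments) is: use the truncation operator $T_1$ applied to $x+1_{\delta B}$ to replace $x$ by a signed sum of the form $1_{\eta D}$ on the large coordinates plus a small remainder bounded by $\|x\|$, then one is comparing $\|1_{\eta D}+1_{\varepsilon A}\|$-type quantities where $|D\cup A|$ and $|B|$ can be controlled so that $|D\cup A|\leqslant f(|B|)$; apply $f$-disjoint superdemocracy. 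The concavity/subadditivity-flavored properties of $f$ from Proposition \ref{p1} (especially item iii), which lets us subtract overlaps) are what make the cardinality bookkeeping work.

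The main obstacle I anticipate is precisely this cardinality bookkeeping in v)'s converse. In the classical ($f=\mathrm{id}$) case one freely uses $|A\cup C|\leqslant |A|+|C|$ and the democracy hypothesis applies to any sets of comparable size; but with $f$ present, after splitting off the part of $\supp(x)$ where coefficients are close to $1$ and merging it with $A$, one must verify the merged set still has size at most $f$ of the relevant comparison set. This is where Proposition \ref{p1} item iii) (``if $n\leqslant f(m)$ and $k\leqslant n$ then $n-k\leqslant f(m-k)$'') and item i) (monotonicity of $f(x)/x$) are essential, and one has to be careful that $f$ being only concave (not additive) does not break the estimates — the trick is that we only ever need to \emph{remove} elements from sets, never add uncontrollably many, so the relevant inequalities go the right way.

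I would organize the proof as: (a) cite Proposition \ref{p2} for i) $\Leftrightarrow$ iii) and ii) $\Leftrightarrow$ iv); (b) note iii) $\Rightarrow$ iv) is immediate (drop disjointness is actually the wrong direction — rather iv) $\Rightarrow$ iii) trivially, and iii) $\Rightarrow$ iv) needs the UL-property splitting of $A\cap B$, so I'd route iii)$\Rightarrow$i)$\Rightarrow$ii)$\Rightarrow$iv) instead, all via Proposition \ref{p2} and definitions); (c) prove v) $\Rightarrow$ iv) by testing \eqref{e33} at $x=0$; (d) prove iii) $\Rightarrow$ v), the substantive step, via the truncation operator $T_1$ and the cardinality estimates from Proposition \ref{p1}. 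This closes the cycle. Throughout, all constants are tracked in terms of $\mathbf C_\ell$, $\Delta_{sd,f}$, $\Delta_{d,f}$, and $\Delta_{A,f}$, though for a qualitative equivalence statement one need not optimize them.
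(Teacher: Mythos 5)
Your overall architecture (reduce everything to two substantive links via Proposition \ref{p2}, get v) $\Rightarrow$ iv) by testing \eqref{e33} at $x=0$ and splitting off $A\cap B$) is sound and matches the paper's reduction. But the step you yourself identify as the substantive one --- showing that $f$-disjoint superdemocracy plus quasi-greediness implies Property (A, $f$) --- is where your plan has a genuine gap. You propose to apply $T_1$ to $x+1_{\delta B}$, replace $x$ by a signed indicator $1_{\eta D}$ on its large coordinates, and then compare $\|1_{\eta D}+1_{\varepsilon A}\|$ with $\|1_{\delta B}\|$ after arranging $|D\cup A|\leqslant f(|B|)$. Two things go wrong. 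First, since $\|x\|_\infty\leqslant 1$ and the coefficients of $1_{\delta B}$ have modulus exactly $1$, the set $\{n:|e_n^*(x+1_{\delta B})|>1\}$ is empty, so $T_1(x+1_{\delta B})=x+1_{\delta B}$ and the truncation accomplishes nothing. Second, and more fundamentally, in Definition \ref{d11} the vector $x$ is arbitrary subject only to $\|x\|_\infty\leqslant 1$ and $x\sqcup A\sqcup B$; its support (and hence any set $D$ of ``large'' coordinates, e.g.\ those with $|e_n^*(x)|=1$) can be arbitrarily large relative to $|B|$, so the cardinality bound $|D\cup A|\leqslant f(|B|)$ cannot be arranged and superdemocracy cannot be applied to $D\cup A$ versus $B$. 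No amount of bookkeeping via Proposition \ref{p1} fixes this, because the obstruction is that $|D|$ is simply uncontrolled.

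The correct argument is much simpler and avoids absorbing $x$ into the democracy comparison altogether: since $x\sqcup B$ and $\|x\|_\infty\leqslant 1$, the set $B$ is a greedy set of $x+1_{\delta B}$, so suppression quasi-greediness gives $\|x\|\leqslant \mathbf C_\ell\|x+1_{\delta B}\|$ and $\|1_{\delta B}\|\leqslant(\mathbf C_\ell+1)\|x+1_{\delta B}\|$; combining with $\|1_{\varepsilon A}\|\leqslant \Delta_{sd,f}\|1_{\delta B}\|$ (legitimate, as $A\sqcup B$ and $|A|\leqslant f(|B|)$) and the triangle inequality yields $\|x+1_{\varepsilon A}\|\leqslant(\mathbf C_\ell+\Delta_{sd,f}(\mathbf C_\ell+1))\|x+1_{\delta B}\|$. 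A secondary, smaller issue: you route i) $\Rightarrow$ ii) ``via Proposition \ref{p2} and definitions,'' but that proposition never passes from disjoint to non-disjoint democracy; that arrow needs its own (short) argument --- split $A$ into $A\setminus B$ and $A\cap B$, use Proposition \ref{p1} item iii) to get $|A\setminus B|\leqslant f(|B\setminus A|)$, apply disjoint democracy to that pair, and bound $\|1_{A\cap B}\|\leqslant\mathbf C_q\|1_B\|$ by quasi-greediness (not merely the UL property). You gesture at this technique elsewhere, but as written the cycle does not close.
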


\begin{proof}
In light of Proposition \ref{p2}, it suffices to show that for quasi-greedy bases,
\begin{itemize}
\item[a)] an $f$-disjoint democratic basis is $f$-democratic. 
\item[b)] Property (A, $f$) is equivalent to $f$-disjoint superdemocracy.
\end{itemize}

a) Let $A, B\in \mathbb{N}^{<\infty}$ and $|A|\leqslant f(|B|)$. By Proposition \ref{p1} item iii), $|A\backslash (A\cap B)|\leqslant f(|B\backslash (A\cap B)|)$. Hence, $f$-disjoint democracy and quasi-greediness imply that 
$$\|1_{A\backslash (A\cap B)}\|\ \leqslant\ \Delta_{d,f}\|1_{B\backslash (A\cap B)}\|\ \leqslant\ \Delta_{d,f}\mathbf C_q\|1_B\|\mbox{ and }\|1_{A\cap B}\|\ \leqslant\ \mathbf C_q\|1_B\|.$$
We, therefore, obtain
$$\|1_A\|\ \leqslant\ \|1_{A\backslash (A\cap B)}\|+\|1_{A\cap B}\| \ \leqslant\ \mathbf{C}_q(1+\Delta_{d,f})\|1_B\|.$$

b) Clearly, Property (A, $f$) implies $f$-disjoint superdemocracy. We prove the converse. Assume that $(e_n)_{n=1}^\infty$ is $\mathbf C_\ell$-suppression quasi-greedy and $\Delta_{sd, f}$-$f$-disjoint superdemocratic. We have
\begin{align*}\|x+1_{\varepsilon A}\|\ \leqslant\ \|x\| + \|1_{\varepsilon A}\|&\ \leqslant\ \mathbf C_\ell\|x+1_{\delta B}\| + \Delta_{sd, f}\|1_{\delta B}\|\\
&\ \leqslant\ \mathbf C_\ell \|x+1_{\delta B}\| + \Delta_{sd, f}(\mathbf C_{\ell}+1)\|x+1_{\delta B}\|\\
&\ =\ (\mathbf C_\ell + \Delta_{sd, f}(\mathbf C_{\ell}+1))\|x+1_{\delta B}\|.
\end{align*}\
Hence, $(e_n)_{n=1}^\infty$ has Property (A, $f$).
\end{proof}

%%%%%%%%%%%%%%%%%%%%%%%%%%%%%%%%%%%%%%%%%%%%%%%%%%%%%%%%%%%%%%%%%%%%%%%%%%%%%%%%%%%%%%%%%%%%%%%%%%%%%%%%%%%%%%%%%%%%%%%%%%%%%%%%%%%%%%%%%%%%%%%%%%%%%%%%%%%%%%%%%%%%%%%%%%%%%%%%%%%%%%%%%%%%%%%%%%%%%%%%%%%%%%%%%%%%%%%%%%%%%%%%%%%%%%%%%%%%%%%%%%%%%%%%%%%%%%%%%%%%%%%%%%%%%%%%%%%%%%%%%%%%%%%%%%%%%%%%%%%%%%%%%%%%%%%%%%%%%%%%%%%%%%%%%%%%%%%%%%%%%%%%%%%%%%%%%%%%%%%%%%%%%%%%%%%%%%%%%%%%%%%%%%%%%%
\section{About $f$-unconditionality}\label{fcon}
Unconditionality is a well-known and strong requirement on bases. In this section, we introduce a weaker concept, called $f$-unconditionality, that shall be shown to lie strictly between quasi-greediness and unconditionality. We study this new property and use it to later characterize $f$-greedy bases. First of all, we have the classical definition of unconditionality. 

\begin{defi}\normalfont\label{de1}
A basis is said to be unconditional if there exists a constant $\mathbf C\geqslant 1$ such that
$$\|x-P_A(x)\|\ \leqslant\ \mathbf C\|x\|, \forall x\in X, \forall A\subset\mathbb{N}.$$
In this case, the least constant $\mathbf C$ is denoted by $\mathbf K_{su}$, called the suppression unconditional constant. 
\end{defi}

\begin{defi}\normalfont
Fix $f\in \mathcal{F}$. A set $A\in \mathbb{N}^{<\infty}$ is an $f$-greedy set of $x$ if $A$ can be written as the disjoint union of $E\sqcup F$, where $|E|\leqslant f(|A|)$, and 
$$|e_n^*(x)|\ \geqslant\ \|x-P_{A}(x)\|_\infty, \forall n\in F.$$
We call $(E, F)$ an $f$-greedy decomposition of $A$. Let $G(x, f)$ denote the collection of all $f$-greedy sets of $x$.
\end{defi}

\begin{prop}\label{pe1}
Fix $x\in X$ and $f\in \mathcal{F}$. A set $A\in \mathbb{N}^{<\infty}$ is an $f$-greedy set of $x$ if and only if there exists $B\subset A$ such that $B$ is a greedy set of $x$ of size at least $|A| - f(|A|)$.
\end{prop}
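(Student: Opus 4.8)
The statement is an equivalence, and the two implications have different flavours. For the easy direction ($\Leftarrow$), suppose $B\subseteq A$ is a greedy set of $x$ with $|B|\geqslant |A|-f(|A|)$. I would take $F:=B$ and $E:=A\setminus B$; then $|E|=|A|-|B|\leqslant f(|A|)$, so the size split is right. To verify the threshold condition on $F$, recall $x-P_A(x)=\sum_{n\notin A}e_n^*(x)e_n$, so $\|x-P_A(x)\|_\infty=\max_{n\notin A}|e_n^*(x)|$; since $B$ is greedy and $\mathbb{N}\setminus A\subseteq\mathbb{N}\setminus B$, one gets $\|x-P_A(x)\|_\infty\leqslant\max_{n\notin B}|e_n^*(x)|\leqslant\min_{n\in B}|e_n^*(x)|$, which is exactly what is demanded of each $n\in F=B$. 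Hence $(E,F)$ is an $f$-greedy decomposition of $A$.

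The substantive direction is ($\Rightarrow$). Let $(E,F)$ be an $f$-greedy decomposition of $A$, and set $t:=\|x-P_A(x)\|_\infty=\max_{n\notin A}|e_n^*(x)|$. If $F=\varnothing$ then $|A|=|E|\leqslant f(|A|)$, so $|A|-f(|A|)\leqslant 0$ and $B=\varnothing$ works; so assume $F\neq\varnothing$ and put $\mu:=\min_{n\in F}|e_n^*(x)|\geqslant t$. The key observation is that the ``canonical greedy core''
\[ B_0\ :=\ \{n\in\mathbb{N}:|e_n^*(x)|>\mu\} \]
is itself a greedy set of $x$ (of order $|B_0|$): indeed $m\notin B_0$ means $|e_m^*(x)|\leqslant\mu<\min_{n\in B_0}|e_n^*(x)|$. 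Moreover $B_0\subseteq A$, because any $n\notin A$ satisfies $|e_n^*(x)|\leqslant t\leqslant\mu$. Note also $B_0$ is finite since $B_0\subseteq A$.

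It then remains to enlarge $B_0$ inside $A$ to reach the required cardinality. Set $k:=\lceil|A|-f(|A|)\rceil$; from $|F|=|A|-|E|\geqslant|A|-f(|A|)$ and $|F|\in\mathbb{Z}$ we get $|F|\geqslant k$, and $k\geqslant 0$ by Proposition~\ref{p1} item ii). If $|B_0|\geqslant k$, I would just take $B:=B_0$. Otherwise, observe $F\setminus B_0=\{n\in F:|e_n^*(x)|=\mu\}\subseteq A$, and $|F\setminus B_0|=|F|-|F\cap B_0|\geqslant k-|B_0|$; adjoin exactly $k-|B_0|$ of its elements to $B_0$ to form $B$. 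In either case $B\subseteq A$, $|B|\geqslant k\geqslant|A|-f(|A|)$, and $B$ is a greedy set of $x$: $\min_{n\in B}|e_n^*(x)|\geqslant\mu$ (equality when elements of modulus $\mu$ have been added), while any $m\notin B$ lies outside $B_0$, hence $|e_m^*(x)|\leqslant\mu$, so $\max_{n\notin B}|e_n^*(x)|\leqslant\mu\leqslant\min_{n\in B}|e_n^*(x)|$.

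The one genuinely delicate point — and the step I expect to need the most care in writing — is the handling of coordinates whose modulus equals the threshold $\mu$. One cannot simply take $B=F$, since $F$ need not be a greedy set: the complementary block $E$ may contain coefficients larger than those of $F$. Passing to $B_0$ and then breaking ties in favour of $F$ fixes this, and the inequality $|F|\geqslant\lceil|A|-f(|A|)\rceil$ (where $f(|A|)\leqslant|A|$ enters) guarantees there are enough tie-valued coordinates of $F$ to pad with. The remainder is routine bookkeeping with the definitions of greedy set and $f$-greedy set.
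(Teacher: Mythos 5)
Your proof is correct and follows essentially the same route as the paper's: the ($\Leftarrow$) direction uses the identical decomposition $(A\setminus B,\,B)$, and the ($\Rightarrow$) direction hinges on the same key observation that the above-threshold part of $E$ must be absorbed into the greedy set. The only difference is cosmetic: the paper takes $B=E'\cup F$ with $E'=\{n\in E:|e_n^*(x)|\geqslant\mu\}$, whose cardinality $|A|-|E\setminus E'|\geqslant |A|-f(|A|)$ comes for free, which avoids your ceiling-and-padding step with $B_0$ and the tie-valued elements of $F$.
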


\begin{proof}Let $x\in X$ and $f\in\mathcal{F}$.
Assume that $A\in G(x, f)$. Let $(E, F)$ be an $f$-greedy decomposition of $A$. Then $|E|\leqslant f(|A|)$. If $F = \emptyset$, then $A = E$ and $|A| = |E| \leqslant f(|A|)$; that is, $|A| - f(|A|) = 0$ and there is nothing to prove. If $F\neq \emptyset$, let $\alpha = \min_{n\in F}|e_n^*(x)|$. Define $E' = \{n\in E: |e_n^*(x)|\geqslant \alpha\}$. It is easy to check that $E'\cup F$ is a greedy set of $x$. Furthermore, 
$$|E'\cup F| \ =\ |E'| + |F|\ =\ |A| - |E\backslash E'|\ \geqslant\ |A| - f(|A|).$$

Conversely, pick $A\in \mathbb{N}^{<\infty}$ such that there exists $B\subset A$ and $B$ is a greedy set of $x$ of size at least $|A| - f(|A|)$. We claim that $(A\backslash B, B)$ is an $f$-greedy decomposition of $A$. Indeed, $$\min_{n\in B}|e_n^*(x)|\ \geqslant\ \|x-P_{B}(x)\|_\infty\ \geqslant\ \|x-P_A(x)\|_\infty$$
and 
$$|A\backslash B| \ =\ |A| - |B| \ \leqslant\ |A| - (|A| - f(|A|))\ =\ f(|A|).$$
Therefore, $A$ is an $f$-greedy set of $x$.
\end{proof}

We now introduce $f$-unconditionality.

\begin{defi}\normalfont\label{dd1}
A basis $(e_n)_{n=1}^\infty$ is said to be $f$-unconditional if there exists a constant $\mathbf C\geqslant 1$ such that
$$\|x-P_A(x)\|\ \leqslant\ \mathbf C\|x\|,\forall x\in X, \forall A\in G(x, f).$$
The smallest such $\mathbf C$ is denoted by $\mathbf K_{su}$, and we say that $(e_n)_{n=1}^\infty$ is $\mathbf K_{su}$-suppression $f$-unconditional. In this case, let $\mathbf K_u$ denote the smallest constant such that 
$$\|P_A(x)\|\ \leqslant\ \mathbf K_{u}\|x\|,\forall x\in X, \forall A\in G(x, f),$$
and say that $(e_n)_{n=1}^\infty$ is $\mathbf K_u$-$f$-unconditional.
\end{defi}

\begin{rek}\normalfont We can obtain Theorem \ref{ktc} from Theorem \ref{mm2} and Proposition \ref{pe2} by choosing $f$ to be identity function. Our notion of $f$-unconditionality lies between quasi-greediness and unconditionality. By definition, we have the implications
$$\mbox{unconditional} \Longrightarrow f\mbox{-unconditional} \Longrightarrow \mbox{quasi-greedy}.$$
To show that our notion of $f$-unconditionality does not overlap with either of these notions, we show that there exists a conditional basis that is $x/\lambda$-unconditional for all $\lambda > 1$ (Example \ref{ex1c}), and there exists a quasi-greedy basis that is not $f$-unconditional for any unbounded $f\in\mathcal{F}$ (Example \ref{ex1a}).

Finally, observe that when $f$ is the identity function, $f$-unconditionality is the same as unconditionality. When $f$ is bounded, it is not hard to show that $f$-unconditionality is the same as quasi-greedy. 
\end{rek}

\begin{prop}\label{pp1}
If a basis $(e_n)_{n=1}^\infty$ is $\mathbf K_{su}$-suppression $f$-unconditional, then it is $\mathbf K_{su}$-suppression quasi-greedy. If a basis is $\mathbf K_u$-$f$-unconditional, then it is $\mathbf K_u$-quasi-greedy. 
\end{prop}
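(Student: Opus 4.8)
The plan is to prove Proposition \ref{pp1} by showing that every ordinary greedy set of $x$ is in particular an $f$-greedy set of $x$, so that the defining inequalities of ($\mathbf K_{su}$-suppression, resp. $\mathbf K_u$-) $f$-unconditionality specialize immediately to the defining inequalities of ($\mathbf C_\ell$-suppression, resp. $\mathbf C_q$-) quasi-greediness. Concretely, fix $x \in X$ and let $\Lambda \in G(x, m)$ be an arbitrary greedy set of $x$ of some order $m$. I want to exhibit an $f$-greedy decomposition $(\emptyset, \Lambda)$ of $\Lambda$: take $E = \emptyset$ and $F = \Lambda$. Then trivially $|E| = 0 \leqslant f(|\Lambda|)$ (using $f \geqslant 0$, which holds since $f\colon\mathbb{R}_{\geqslant 0}\to\mathbb{R}_{\geqslant 0}$), and the condition $|e_n^*(x)| \geqslant \|x - P_\Lambda(x)\|_\infty$ for all $n \in F = \Lambda$ is exactly the statement that $\Lambda$ is a greedy set: for $n \in \Lambda$ we have $|e_n^*(x)| \geqslant \max_{k \notin \Lambda}|e_k^*(x)| = \|x - P_\Lambda(x)\|_\infty$, since $x - P_\Lambda(x) = P_{\Lambda^c}(x)$ has coefficients $e_k^*(x)$ for $k \notin \Lambda$ and $0$ otherwise. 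Alternatively, and perhaps more cleanly, one can invoke Proposition \ref{pe1}: $\Lambda$ itself is a subset of $\Lambda$ which is a greedy set of $x$, and it has size $|\Lambda| \geqslant |\Lambda| - f(|\Lambda|)$ because $f(|\Lambda|) \geqslant 0$; hence $\Lambda \in G(x, f)$.

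Having established $G(x, m) \subseteq G(x, f)$ for every $m$ (equivalently, that every greedy set is an $f$-greedy set), the rest is a one-line deduction. If $(e_n)_{n=1}^\infty$ is $\mathbf K_{su}$-suppression $f$-unconditional, then for every $x \in X$, every $m \in \mathbb{N}$, and every $\Lambda \in G(x, m)$ we have $\Lambda \in G(x, f)$, so $\|x - P_\Lambda(x)\| \leqslant \mathbf K_{su}\|x\|$; this is precisely inequality \eqref{e3} with constant $\mathbf K_{su}$, so $(e_n)_{n=1}^\infty$ is quasi-greedy with $\mathbf C_\ell \leqslant \mathbf K_{su}$. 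Similarly, if $(e_n)_{n=1}^\infty$ is $\mathbf K_u$-$f$-unconditional, then $\|P_\Lambda(x)\| \leqslant \mathbf K_u\|x\|$ for all such $\Lambda$, which is exactly the defining inequality of $\mathbf K_u$-quasi-greediness; hence $\mathbf C_q \leqslant \mathbf K_u$.

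There is essentially no obstacle here: the only thing to be slightly careful about is the direction of the containment $G(x,m) \subseteq G(x,f)$ and the fact that it relies solely on $f$ being nonnegative (not on any of the finer properties in $\mathcal{F}$ such as concavity or $f(1)\leqslant 1$). I would state it as a short remark that the $f$-greedy sets form a \emph{larger} collection than the greedy sets, which is the conceptual reason $f$-unconditionality is a \emph{weaker} requirement than quasi-greediness would be if one only quantified over greedy sets — but in fact the definition of $f$-unconditionality already quantifies over the larger collection $G(x,f)$, so quasi-greediness follows. The proof is therefore a couple of lines once the set inclusion is noted.
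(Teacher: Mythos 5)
Your proof is correct and follows essentially the same route as the paper, whose entire argument is the observation that $G(x,m)\subset G(x,f)$ for all $m$, justified via Proposition \ref{pe1}. Your direct verification that $(\emptyset,\Lambda)$ is an $f$-greedy decomposition of any greedy set $\Lambda$ is a valid (and slightly more self-contained) way of establishing the same inclusion, and the remainder of your deduction matches the paper's.
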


\begin{proof}
Simply note that for all $m\in\mathbb{N}$, $G(x, m)\subset G(x, f)$ by Proposition \ref{pe1}.
\end{proof}

We characterize $f$-unconditionality as follows. 

\begin{prop}\label{pd10}
Let $f\in \mathcal{F}$. A basis $(e_n)_{n=1}^\infty$ is $f$-unconditional if and only if it is quasi-greedy and there exists a constant $\mathbf C>0$ such that
\begin{equation}\label{e31}\|P_A(x)\|\ \leqslant\ \mathbf C\|x\|,\end{equation}
for all $x\in\mathbb{X}$ and for all $A\in\mathbb{N}^{<\infty}$ such that $|A|\leqslant f(|A| + |B|)$ for some greedy set $B$ of $x$ that is disjoint from $A$. 
\end{prop}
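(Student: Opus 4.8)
The plan is to prove both implications, using Proposition \ref{pe1} to translate between $f$-greedy sets and greedy sets at every turn.

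\smallskip

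\textbf{($\Rightarrow$) Suppose $(e_n)_{n=1}^\infty$ is $\mathbf K_{su}$-suppression $f$-unconditional.} First, quasi-greediness is immediate from Proposition \ref{pp1}. For the bound \eqref{e31}, fix $x$ and $A\in\mathbb{N}^{<\infty}$ with $|A|\leqslant f(|A|+|B|)$ for some greedy set $B$ of $x$ disjoint from $A$. Set $S=A\cup B$; then $|A|\leqslant f(|S|)$, and since $B\subset S$ is a greedy set of $x$ of size $|B| = |S|-|A| \geqslant |S|-f(|S|)$, Proposition \ref{pe1} tells us $S\in G(x,f)$ with $f$-greedy decomposition $(A,B)$. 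The issue is that $f$-unconditionality controls $\|x-P_S(x)\|$ and $\|P_S(x)\|$, not $\|P_A(x)\|$ directly. I would extract $\|P_A(x)\|$ by writing $P_A(x) = P_S(x) - P_B(x)$ and bounding each piece: $\|P_S(x)\|\leqslant \mathbf C\|x\|$ since $S\in G(x,f)$ (here $\mathbf C$ is the $\mathbf K_u$-constant, which is at most $\mathbf K_{su}+1$ or similar — the precise constant is not needed for the qualitative statement), and $\|P_B(x)\|\leqslant \mathbf C_q\|x\|$ since $B$ is an ordinary greedy set and quasi-greediness gives the $\mathbf C_q$ bound. Hence \eqref{e31} holds with $\mathbf C = \mathbf K_u + \mathbf C_q$ (or an analogous explicit constant).

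\smallskip

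\textbf{($\Leftarrow$) Suppose $(e_n)_{n=1}^\infty$ is quasi-greedy with constant for \eqref{e31}.} Let $x\in X$ and $A\in G(x,f)$; I must bound $\|x-P_A(x)\|$. By Proposition \ref{pe1}, there is $B\subset A$ that is a greedy set of $x$ with $|B|\geqslant |A|-f(|A|)$. Write $A = B\sqcup C$ where $C = A\setminus B$, so $|C| = |A|-|B|\leqslant f(|A|)$. Now $x - P_A(x) = (x - P_B(x)) - P_C(x)$. The first term is controlled by quasi-greediness: $\|x-P_B(x)\|\leqslant \mathbf C_\ell\|x\|$ since $B$ is a greedy set. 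For the second term I want to apply \eqref{e31} to $C$, so I must check its hypothesis: I need a greedy set $B'$ of $x$, disjoint from $C$, with $|C|\leqslant f(|C|+|B'|)$. The natural candidate is $B' = B$ (disjoint from $C$ by construction), and then $|C|+|B'| = |A|$, so the requirement $|C|\leqslant f(|A|)$ is exactly what we already have. Thus \eqref{e31} gives $\|P_C(x)\|\leqslant \mathbf C\|x\|$, and combining, $\|x-P_A(x)\|\leqslant(\mathbf C_\ell+\mathbf C)\|x\|$, so the basis is $f$-unconditional.

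\smallskip

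\textbf{Main obstacle.} The delicate point in both directions is the bookkeeping that turns an $f$-greedy decomposition $(E,F)$ of a set into the $A$-versus-greedy-set-$B$ data appearing in \eqref{e31}, and conversely. Proposition \ref{pe1} does most of this work, but one must be careful that the ``$B$'' obtained there is genuinely disjoint from the relevant part and has the right size; there is a small subtlety in that an arbitrary $f$-greedy decomposition need not have $F$ itself be a greedy set, which is why we pass through Proposition \ref{pe1} to extract an honest greedy subset $B$. I expect the constants to work out cleanly; the qualitative equivalence is robust and the only real content is the set-theoretic translation plus two applications of quasi-greediness.
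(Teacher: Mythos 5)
Your proposal is correct and follows essentially the same route as the paper: the forward direction bounds $\|P_A(x)\|$ via $\|P_{A\cup B}(x)\|+\|P_B(x)\|$ after recognizing $A\cup B\in G(x,f)$, and the backward direction splits an $f$-greedy set into a genuine greedy subset (handled by quasi-greediness) plus a small remainder (handled by \eqref{e31}), which is exactly the paper's $E'/F'$ decomposition packaged through Proposition \ref{pe1}. The only cosmetic difference is that the paper bounds $\|P_A(x)\|$ rather than $\|x-P_A(x)\|$ in the converse, which changes nothing beyond a triangle inequality.
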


\begin{proof}
Assume that $(e_n)_{n=1}^\infty$ is $\mathbf K_u$-$f$-unconditional. By Proposition \ref{pp1}, $(e_n)_{n=1}^\infty$ is $\mathbf K_u$-quasi-greedy. To see \eqref{e31}, we fix $x\in\mathbb{X}$, a greedy set $B$ of $x$, and  $A\subset\mathbb{N}^{<\infty}$ such that $A\sqcup B$ and  $|A|\leqslant f(|A| + |B|)$. We have
$$\|P_{A}(x)\|\ \leqslant \ \|P_{A\cup B}(x)\| + \|P_{B}(x)\|\ \leqslant\ \|P_{A\cup B}(x)\| + \mathbf K_u\|x\|.$$
Since $A\cup B\in G(x, f)$, we get
$$\|P_{A\cup B}(x)\| \ \leqslant\ \mathbf K_u\|x\|.$$
We conclude that $\|P_A(x)\|\leqslant 2\mathbf K_u\|x\|$.

Conversely, assume that $(e_n)_{n=1}^\infty$ is $\mathbf C_q$-quasi-greedy and satisfies \eqref{e31}. Let $x\in X$ and $A\in G(x, f)$ with an $f$-greedy decomposition $(E, F)$. If $F = \emptyset$, then $A = E$ and $|E| \leqslant f(|E|)$. By \eqref{e31}, we obtain
$$\|P_A(x)\|\ \leqslant\ \mathbf C\|x\|.$$
If $F\neq\emptyset$, let $\alpha = \min_{n\in F}|e_n^*(x)|$ and $E' = \{n\in E: |e^*_n(x)| < \alpha\}$. Then $F' = F\cup (E\backslash E')$ is a greedy set of $x$.
We get 
$$\|P_{A}(x)\|\ \leqslant\ \|P_{E'}(x)\| + \|P_{F'}(x)\|\ \leqslant\ \|P_{E'}(x)\| + \mathbf C_{q}\|x\|.
$$
Furthermore, 
$$|E'|\ \leqslant\ |E|\ \leqslant\ f(|A|)\ =\ f(|E'| + |F'|).$$
Hence, \eqref{e31} implies that 
$\|P_{E'}(x)\| \leqslant \mathbf C\|x\|$. Therefore, 
$$\|P_{A}(x)\|\ \leqslant\ (\mathbf C_q + \mathbf C)\|x\|.$$
This completes our proof. 
\end{proof}

The above characterization inspires our introduction of Property ($f$).

\begin{defi}\label{d20}\normalfont
A basis $(e_n)_{n=1}^\infty$ is said to have Property ($f$) if there exists a constant $\mathbf C>0$ such that
$$\|P_A(x)\|\ \leqslant\ \mathbf C\left\|x + 1_{\varepsilon B}\right\|,$$
for all $x\in\mathbb{X}$ with $\|x\|_\infty\leqslant 1$, for sets $A, B\in\mathbb{N}^{<\infty}$ with $A\subset \supp(x)$, $x\sqcup B$, and $|A|\leqslant f(|A| + |B|)$, and for all signs $\varepsilon$.
\end{defi}

When $f$ is the identity function, Property ($f$) is the same as unconditionality.

\begin{prop}\label{ps}
A basis is $f$-unconditional if and only if it is quasi-greedy and has Property ($f$).
\end{prop}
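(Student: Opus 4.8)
The plan is to deduce this from the characterization of $f$-unconditionality already obtained in Proposition \ref{pd10}, which says that a basis is $f$-unconditional iff it is quasi-greedy and satisfies the bound $\|P_A(x)\|\leqslant \mathbf C\|x\|$ whenever $|A|\leqslant f(|A|+|B|)$ for some greedy set $B$ of $x$ disjoint from $A$. So the whole task reduces to showing, under the standing assumption of quasi-greediness, that Property ($f$) is equivalent to condition \eqref{e31}. I would prove the two implications separately.

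For the forward direction (Property ($f$) $\Rightarrow$ \eqref{e31}), I start with $x\in X$, a greedy set $B$ of $x$ disjoint from $A$, and $|A|\leqslant f(|A|+|B|)$. The goal is to manufacture a vector to which Property ($f$) applies. The natural move is to normalize by the threshold: let $\alpha=\min_{n\in B}|e_n^*(x)|$ if $B\neq\emptyset$ (handle $B=\emptyset$ trivially, since then $|A|\leqslant f(|A|)$ and one can instead normalize by $\|x\|_\infty$ or argue directly), and consider $y=\frac1\alpha T_\alpha(x)$ or a closely related truncation so that the ``small'' part has sup-norm $\leqslant 1$ and the ``large'' coordinates (those in $B$, and any in $A$ exceeding $\alpha$) become a sign vector $1_{\varepsilon B'}$. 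Using Theorem \ref{bto} (uniform boundedness of $T_\alpha$ by $\mathbf C_\ell$), one controls $\|T_\alpha(x)\|\leqslant\mathbf C_\ell\|x\|$; then Property ($f$), applied to the normalized small part with the sign vector indexed on the large coordinates, bounds $\|P_A(\cdot)\|$ in terms of $\|T_\alpha(x)/\alpha\|$, and scaling back by $\alpha$ gives \eqref{e31} up to the constant $\mathbf C_\ell$. A mild technical point is that coordinates of $x$ inside $A$ that exceed $\alpha$ need to be folded into the sign-vector part rather than the ``$x$'' part; Proposition \ref{pe1} and the bookkeeping in the proof of Proposition \ref{pd10} show how to reorganize $A\cup B$ into a genuine $f$-greedy set, and the same reorganization works here.

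For the reverse direction (quasi-greedy $+$ \eqref{e31} $\Rightarrow$ Property ($f$)): take $x$ with $\|x\|_\infty\leqslant 1$, sets $A\subset\supp(x)$ and $B$ with $x\sqcup B$ and $|A|\leqslant f(|A|+|B|)$, and a sign $\varepsilon$. Put $z=x+1_{\varepsilon B}$. Then $B$ is a greedy set of $z$ (its coordinates have modulus $1\geqslant\|z-P_B(z)\|_\infty=\|x\|_\infty$), it is disjoint from $A\subset\supp(x)$, and $P_A(z)=P_A(x)$ while $|A|\leqslant f(|A|+|B|)$; so \eqref{e31} applied to $z$ gives exactly $\|P_A(x)\|=\|P_A(z)\|\leqslant\mathbf C\|z\|=\mathbf C\|x+1_{\varepsilon B}\|$, which is Property ($f$). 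This direction is essentially immediate. Combining both directions with Proposition \ref{pd10} yields the proposition.

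The main obstacle is the forward direction: one must turn the abstract hypotheses ``$B$ a greedy set, $|A|\leqslant f(|A|+|B|)$'' into the concrete normalized configuration (a vector of sup-norm $\leqslant 1$ plus a unimodular sign vector on a disjoint set) that Property ($f$) is phrased in terms of, while keeping the size inequality $|A|\leqslant f(|A|+|B|)$ intact after the coordinates of $A$ above the threshold are moved into the sign-vector block. This is exactly the kind of truncation-plus-reindexing argument that appears in the proof of Proposition \ref{pd10}, so I expect it to go through with the truncation operator bound $\mathbf C_\ell$ from Theorem \ref{bto} as the only quantitative input, at the cost of a final constant of the form $\mathbf C\,\mathbf C_\ell$ (or similar).
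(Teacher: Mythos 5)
Your proposal is correct and follows essentially the same route as the paper: reduce via Proposition \ref{pd10} to showing that, for quasi-greedy bases, Property ($f$) is equivalent to \eqref{e31}, prove the easy direction by applying \eqref{e31} to $y=x+1_{\varepsilon B}$, and prove the other direction by truncating at $\alpha=\min_{n\in B}|e_n^*(x)|$ and invoking Theorem \ref{bto}. The only remark is that your worry about coordinates of $A$ exceeding $\alpha$ is vacuous, since $A$ is disjoint from the greedy set $B$, so $T_\alpha(x)=P_{B^c}(x)+\alpha 1_{\varepsilon B}$ exactly and Property ($f$) applies directly (after rescaling by $\alpha$), just as in the paper.
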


\begin{proof}
Due to Propositions \ref{pp1} and \ref{pd10}, it suffices to prove that for a quasi-greedy basis, Property ($f$) is equivalent to \eqref{e31}. Assume \eqref{e31}. Let $x, A, B, \varepsilon$ as in Definition \ref{d20}. Set $y = x + 1_{\varepsilon B}$. Then $B$ is a greedy set of $y$ and $|A|\leqslant f(|A| + |B|)$. By \eqref{e31}, 
$$\|P_A(x)\|\ =\ \|P_A(y)\|\ \leqslant\ \mathbf C\|y\|\ =\ \mathbf C\|x+1_{\varepsilon B}\|.$$

Conversely, assume that $(e_n)_{n=1}^\infty$ has $\Delta$-Property ($f$) and is $\mathbf C_\ell$-suppression quasi-greedy. Choose $x, A, B$ as in \eqref{e31}. If $B = \emptyset$, then $|A|\leqslant f(|A|)$. By Property ($f$), $\|P_A(x)\|\leqslant \Delta\|x\|$. If $B\neq \emptyset$, let $\alpha = \min_{n\in B}|e_n^*(x)|$ and $\varepsilon = (\sgn(e_n^*(x))$.
By Theorem \ref{bto}, we have
$$\|P_{B^c}(x) + \alpha 1_{\varepsilon B}\|\ \leqslant\ \mathbf C_\ell\|x\|.$$
Let $A' = A\cap \supp(P_{B^c}(x))$, which satisfies $|A'|\leqslant f(|A'| + |B|)$ due to Proposition \ref{p1} item iii). By Property ($f$), 
$$\|P_A(x)\|\ =\ \|P_{A'}(P_{B^c}(x))\|\ \leqslant\ \Delta\|P_{B^c}(x) + \alpha 1_{\varepsilon B}\|.$$
Therefore, 
$$\|P_A(x)\|\ \leqslant\ \mathbf C_\ell\Delta \|x\|,$$
as desired. 
\end{proof}

\begin{rek}\normalfont
We provide several examples to show that our characterization is not redundant. In particular, there exists a quasi-greedy basis that does not have Property ($f$) for any unbounded $f$ (Example \ref{ex1a}). On the other hand, there exists a non-quasi-greedy basis that has Property ($x/\lambda$) for any $\lambda > 1$ (Example \ref{ex1b}). This is the best we can do since Property ($x$) is the same as unconditionality. 
\end{rek}

%%%%%%%%%%%%%%%%%%%%%%%%%%%%%%%%%%%%%%%%%%%%%%%%%%%%%%%%%%%%%%%%%%%%%%%%%%%%%%%%%%%%%%%%%%%%%%%%%%%%%%%%%%%%%%%%%%%%%%%%%%%%%%%%%%%%%%%%%%%%%%%%%%%%%%%%%%%%%%%%%%%%%%%%%%%%%%%%%%%%%%%%%%%%%%%%%%%%%%%%%%%%%%%%%%%%%%%%%%%%%%%%%%%%%%%%%%%%%%%%%%%%%%%%%%%%%%%%%%%%%%%%%%%%%%%%%%%%%%%%%%%%%%%%%%%%%%%%%%%%%%%%%%%%%%%%%%%%%%%%%%%%%%%%%%%%%%%%%%%%%%%%%%%%%%%%%%%%%%%%%%%%%%%%%%%%%%%%%%%%%%%%%%%%%%

\section{Characterizations of $f$-(almost) greedy bases}\label{equiv}

In this section, we prove Theorems \ref{m1}, \ref{mm2} and Corollary \ref{c1}, all of which give equivalences of $f$-(almost) greedy bases.

\begin{proof}[Proof of Theorem \ref{m1}]
i) Assume that $(e_n)_{n=1}^\infty$ is $\mathbf C_{a, f}$-$f$-almost greedy. Since by definition, $\widetilde{\sigma}_m(x)\leqslant \|x\|$ for all $x\in X$ and $m\in\mathbb{N}$, \eqref{e11} implies that $(e_n)_{n=1}^\infty$ is $\mathbf C_{a, f}$-suppression quasi-greedy. Let $x, A, B, \varepsilon, \delta$ be chosen as in Definition \ref{d11}. Form $y = x + 1_{\varepsilon A} + 1_{\delta B}$. We have
$$\|x+1_{\varepsilon A}\|\ =\ \|y - P_B(y)\|\ \leqslant\ \mathbf C_{a, f}\widetilde{\sigma}_{f(|B|)}(y)\ \leqslant\ \mathbf C_{a, f}\|x+1_{\delta B}\|.$$

ii) Assume that $(e_n)_{n=1}^\infty$ is $\mathbf C_\ell$-suppression quasi-greedy and has $\Delta_{A, f}$-Property (A, $f$). Let $x\in X$, $m\in\mathbb{N}$, $\Lambda\in G(x, m)$, and a set $B\subset\mathbb{N}$ with $|B|\leqslant f(|\Lambda|)$, which implies that $|B\backslash \Lambda| \leqslant f(|\Lambda\backslash B|)$ by Proposition \ref{p1} item iii). Let $\varepsilon = (\sgn(e_n^*(x))$ and  $\alpha = \min_{n\in \Lambda}|e_n^*(x)|$. By Proposition \ref{p22} and Theorem \ref{bto}, we have
\begin{align*}
    \|x-P_{\Lambda}(x)\|&\ \leqslant\ \Delta_{A, f}\|x-P_{\Lambda}(x) - P_{B\backslash \Lambda}(x) + \alpha 1_{\varepsilon \Lambda\backslash B}\|\\
    &\ \leqslant\ \Delta_{A, f}\|T_{\alpha}(x-P_{\Lambda}(x) - P_{B\backslash \Lambda}(x) + P_{\Lambda\backslash B}(x))\|\\
    &\ \leqslant\ \Delta_{A, f}\mathbf{C}_\ell\|x-P_B(x)\|.
\end{align*}
Since $B$ is arbitrary, we know that $(e_n)_{n=1}^\infty$ is $\Delta_{A, f}\mathbf C_\ell$-$f$-almost greedy. 
\end{proof}

\begin{proof}[Proof of Theorem \ref{mm2}]
i) Assume that $(e_n)_{n=1}^\infty$ is $\mathbf C_{g,f}$-$f$-greedy. First, we show that it is $\mathbf C_{g,f}$-suppression $f$-unconditional. Let $x\in X$, $A\in G(x, f)$, and $(E,F)$ be an $f$-greedy decomposition of $A$. Choose $\alpha > 0$ sufficiently large such that $y:= x + \alpha 1_E$ has $E$ as a greedy set. It follows that $A$ is a greedy set of $y$. We have
$$\|x-P_A(x)\|\ =\ \|y-P_A(y)\|\ \leqslant\ \mathbf C_{g, f}\sigma_{f(|A|)}(y)\ \leqslant\ \mathbf C_{g,f}\|y - \alpha 1_E\|\ =\ \mathbf C_{g,f}\|x\|.$$
Therefore, $(e_n)_{n=1}^\infty$ is $\mathbf C_{g,f}$-suppression $f$-unconditional. To see that $(e_n)_{n=1}^\infty$ has $\mathbf C_{g,f}$-Property (A, $f$), we use a similar argument as in the proof of Theorem \ref{m1} item i).

ii) Assume that $(e_n)_{n=1}^\infty$ is $\mathbf K_{su}$-suppression $f$-unconditional and has $\Delta_{A, f}$-Property (A, $f$). Let $x\in X$, $\Lambda\in G(x, m)$, $A\in\mathbb{N}^{<\infty}$ with $|A|\leqslant f(m)$, and $(a_n)_{n\in A}$ be arbitrary scalars. Set $\varepsilon = (\sgn(e_n^*(x)))$ and $\alpha = \min_{n\in \Lambda}|e_n^*(x)|$. Note that $|A\backslash \Lambda|\ \leqslant\ f(|\Lambda\backslash A|)$. By Proposition \ref{p22}, we have 
\begin{align}\label{ee3}\|x-P_{\Lambda}(x)\|&\ \leqslant\ \Delta_{A, f}\|x-P_{\Lambda}(x) - P_{A\backslash \Lambda}(x) + \alpha 1_{\varepsilon \Lambda\backslash A}\|\nonumber\\
&\ =\ \Delta_{A, f}\|x-P_{\Lambda\cup A}(x) + \alpha 1_{\varepsilon \Lambda\backslash A}\|.
\end{align}
On the other hand, write
$$y\ :=\ x-\sum_{n\in A}a_ne_n\ =\ x-P_{\Lambda\cup A}(x) +\sum_{n\in A}(e_n^*(x)-a_n)e_n + P_{\Lambda\backslash A}(x).$$
Observe that $\Lambda\cup A\in G(y, f)$ because $f(|\Lambda\cup A|)\geqslant f(m)\geqslant |A|$ and 
$$|e_n^*(y)| \ \geqslant\  \alpha \ \geqslant\ \|x-P_{\Lambda\cup A}(x)\|_\infty \ =\ \|y- P_{\Lambda\cup A}(y)\|_\infty, \forall n\in \Lambda\backslash A.$$
Therefore, 
\begin{equation}\label{ee1}\|x-P_{\Lambda\cup A}(x)\|\ =\ \|y-P_{\Lambda\cup A}(y)\|\ \leqslant\ \mathbf K_{su}\|y\|.\end{equation}
On the other hand, by Proposition \ref{pp1} and Theorem \ref{bto}, we get
\begin{align}\label{ee4}
    \|y\|&\ \geqslant\  \frac{1}{\mathbf K_{su}}\left\|x-P_{\Lambda\cup A}(x) +\sum_{n\in A}T_\alpha(e_n^*(x)-a_n)e_n +  \alpha 1_{\varepsilon \Lambda\backslash A}\right\|\nonumber\\
    &\ \geqslant\ \frac{1}{\mathbf K_{su}(\mathbf K_{su}+1)} \left\|\alpha 1_{\varepsilon \Lambda\backslash A}\right\|,
\end{align}
where the last inequality is due to $\Lambda\backslash A$ being a greedy set of $$x-P_{\Lambda\cup A}(x) +\sum_{n\in A}T_\alpha(e_n^*(x)-a_n)e_n +  \alpha 1_{\varepsilon \Lambda\backslash A}.$$
From \eqref{ee1} and \eqref{ee4}, we obtain
$$\|x - P_A(x) + \alpha 1_{\varepsilon \Lambda\backslash A}\|\ \leqslant\ (\mathbf K^2_{su} + 2\mathbf K_{su})\|y\|,$$
which, combined with \eqref{ee3}, gives
$$\|x-P_{\Lambda}(x)\|\ \leqslant\ \Delta_{A, f}(\mathbf K^2_{su} + 2\mathbf K_{su})\left\|x-\sum_{n\in A}a_ne_n\right\|,$$
as desired. 
\end{proof}

We are now ready to prove the equivalence between $f$-almost greedy and $f$-greedy bases for each non-identity $f\in\mathcal{F}$.

\begin{proof}[Proof of Theorem \ref{m3}]
We need only to show that an $f$-almost greedy basis is $f$-greedy. Assume that $(e_n)_{n=1}^\infty$ is $f$-almost greedy; that is, $(e_n)_{n=1}^\infty$ is $\mathbf C_q$-quasi-greedy and $\Delta$-$f$-superdemocratic according to Theorem \ref{m1} and Proposition \ref{pe2}.

We know that $\lim_{x\rightarrow\infty}\frac{f(x)}{x} = \mathbf c$ exists by Proposition \ref{p1}. Since $f$ is non-identity, the proof of Proposition \ref{p1} shows that $\mathbf c < 1$. Suppose first that $0 < \mathbf c < 1$. Since $f(x)/x$ is decreasing, we know that
$\mathbf c x \leqslant f(x)$ for all $x > 0$.
By Theorem \ref{dkktc}, $(e_n)_{n=1}^\infty$ is almost greedy. We use Proposition \ref{ptu} to conclude that the basis is $f$-greedy. 

For the rest of the proof, we assume that $\lim_{x\rightarrow\infty} \frac{f(x)}{x} = 0$. By Theorem \ref{mm2} and Proposition \ref{ps}, it suffices to verify that $(e_n)_{n=1}^\infty$ has Property ($f$). Choose $x, A, B, \varepsilon$ as in Definition \ref{d20}.
We want to show that there exists a constant $\mathbf C$ (independent of $x, A, B, \varepsilon$) such that 
$$\|P_A(x)\|\ \leqslant\ \mathbf C\|x + 1_{\varepsilon B}\|.$$
Since $f$ is concave, we have
$$|A|\ \leqslant\  f(|A|+|B|)\ \leqslant\ f(|A|) + f(|B|).$$
Choose $N \geqslant 6$ be such that $f(x) \leqslant x/3$ for all $x\geqslant N$. 
If $|A|\leqslant N$, then
$$\|P_A(x)\|\ \leqslant\ \|P_A\|\|x\|\ \leqslant\ \mathbf c_2^2\|x\|\ \leqslant\ \mathbf c_2^2 (\mathbf C_q+1)\|x+1_{\varepsilon B}\|.$$
If $|A| > N$, then
$$f(|B|) \ \geqslant\ |A| - f(|A|)\ \geqslant\ \frac{2|A|}{3}.$$
Partition $A$ into two sets $A_1, A_2$ such that 
$$|A_i|\ \leqslant\ \frac{|A|}{2}+1\ \leqslant\ \frac{2|A|}{3}\ \leqslant\ f(|B|).$$
By $\Delta$-$f$-superdemocracy, we get
$$\sup_{\delta}\|1_{\delta A}\|\ \leqslant\ \sup_{\delta'}\|1_{\delta' A_1}\| + \sup_{\delta''}\|1_{\delta'' A_2}\|\ \leqslant\ 2\Delta\|1_{\varepsilon B}\|.$$
Therefore, by convexity and the quasi-greedy property, 
$$\|P_A(x)\|\ \leqslant\ \sup_{\delta}\|1_{\delta A}\|\ \leqslant\ 2\Delta\|1_{\varepsilon B}\|\ \leqslant\ 2\Delta\mathbf C_q\|x+ 1_{\varepsilon B}\|.$$
We conclude that $(e_n)_{n=1}^\infty$ has Property ($f$), which completes our proof. 
\end{proof}

\begin{proof}[Proof of Corollary \ref{c1}]
The equivalence between i) and ii) is due to Theorem \ref{m3}. The equivalence between ii) and iii) is due to Theorem \ref{m1}. According to Proposition \ref{p2}, iii) $\Longleftrightarrow$ iv). Next, Proposition \ref{pe2} gives the equivalence among iii), iv), and v). Finally, the second statement follows from the fact that $f$-unconditionality implies quasi-greediess and Theorem \ref{mm2}.
\end{proof}

We shall end this section with an easy corollary that shall be used to prove Theorem \ref{m2}.
\begin{cor}\label{c4}
Let $g\in \mathcal{F}$ and $g$ is bounded. Then a basis is $g$-almost greedy if and only if it is quasi-greedy. 
\end{cor}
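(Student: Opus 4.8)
The plan is to deduce Corollary \ref{c4} directly from the characterization of $f$-almost greedy bases already proved, specialized to the case of a bounded $g\in\mathcal{F}$, together with the classical characterization of almost greedy bases in Theorem \ref{dkktc}. The key observation is that when $g$ is bounded, say $g(x)\leqslant M$ for all $x\geqslant 0$ (and hence $g(m)\leqslant M$ for every $m\in\mathbb{N}$), the notion of $g$-disjoint democracy becomes vacuous in a strong sense: the constraint $|A|\leqslant g(|B|)$ only ever involves sets $A$ of size at most $M$, so the inequality $\|1_{\varepsilon A}\|\leqslant \mathbf C\|1_{\delta B}\|$ can always be arranged by a uniform constant depending only on $M$, $\mathbf c_1$, $\mathbf c_2$, and (via the UL property) on the quasi-greedy constant. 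Indeed $\|1_{\varepsilon A}\|\leqslant |A|\,\mathbf c_2\leqslant M\mathbf c_2$ while $\|1_{\delta B}\|\geqslant \mathbf c_1$ when $B\neq\emptyset$ (and the case $B=\emptyset$ forces $A=\emptyset$ since $g(0)=0$), so every basis is automatically $g$-superdemocratic with constant $M\mathbf c_2/\mathbf c_1$.

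The argument then runs as follows. First I would invoke Theorem \ref{m1}: a basis is $g$-almost greedy if and only if it is suppression quasi-greedy and has Property (A, $g$), and by Proposition \ref{pe2} (which applies since $g\in\mathcal{F}$), for a quasi-greedy basis Property (A, $g$) is equivalent to $g$-disjoint democracy. So a basis is $g$-almost greedy iff it is quasi-greedy and $g$-disjoint democratic. Second, I would show that a quasi-greedy basis (in fact any basis) is automatically $g$-disjoint democratic when $g$ is bounded, by the elementary norm estimate sketched above. Combining these two facts gives: $g$-almost greedy $\iff$ quasi-greedy $+$ ($g$-disjoint democratic) $\iff$ quasi-greedy. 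That is the claimed equivalence.

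I do not expect any serious obstacle here; this is genuinely a corollary. The one point requiring a small amount of care is making sure that the direction ``$g$-almost greedy $\Rightarrow$ quasi-greedy'' is clean — but that is immediate from the definition of $f$-almost greedy (take $\widetilde\sigma_{g(m)}(x)\leqslant\|x\|$) and is exactly the observation already used in the proof of Theorem \ref{m1}, item i). One must also double-check the degenerate edge case $B=\emptyset$ in the democracy inequality, where $g(0)=0$ forces $|A|=0$, so the inequality holds trivially. With those remarks in place the proof is two or three lines: cite Theorem \ref{m1} and Proposition \ref{pe2} for the characterization, then dispose of $g$-disjoint democracy by the boundedness of $g$.

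\begin{proof}
Since $g\in\mathcal{F}$ is bounded, there is $M\in\mathbb{N}$ with $g(m)\leqslant M$ for all $m\in\mathbb{N}$. We claim every basis is $g$-disjoint democratic. Let $A, B\in\mathbb{N}^{<\infty}$ be disjoint with $|A|\leqslant g(|B|)$, and let $\varepsilon,\delta$ be signs. If $B=\emptyset$, then $|A|\leqslant g(0)=0$, so $A=\emptyset$ and \eqref{e2} is trivial. If $B\neq\emptyset$, then $\|1_{\delta B}\|\geqslant\mathbf c_1$ and $\|1_{\varepsilon A}\|\leqslant|A|\mathbf c_2\leqslant M\mathbf c_2$, so $\|1_{\varepsilon A}\|\leqslant(M\mathbf c_2/\mathbf c_1)\|1_{\delta B}\|$. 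This proves the claim.

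Now suppose $(e_n)_{n=1}^\infty$ is $g$-almost greedy. By Theorem \ref{m1} item i), it is suppression quasi-greedy. Conversely, suppose $(e_n)_{n=1}^\infty$ is quasi-greedy. By the claim and Proposition \ref{pe2}, it has Property (A, $g$), so by Theorem \ref{m1} item ii) it is $g$-almost greedy. This completes the proof.
\end{proof}
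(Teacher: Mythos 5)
Your proof is correct and follows essentially the same route as the paper: the paper invokes Corollary \ref{c1} (which packages exactly the Theorem \ref{m1} $+$ Proposition \ref{pe2} combination you use) and then shows every basis is $g$-democratic by the same elementary norm estimate. One small slip: the lower bound $\|1_{\delta B}\|\geqslant \mathbf c_1$ is not justified in general; the correct elementary bound is $\|1_{\delta B}\|\geqslant |e_n^*(1_{\delta B})|/\|e_n^*\| \geqslant 1/\mathbf c_2$ for any $n\in B$, which is what the paper uses and which still yields a uniform democracy constant (namely $M\mathbf c_2^2$), so the argument goes through unchanged.
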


\begin{proof}
By Corollary \ref{c1}, it suffices to show that if $g$ is bounded, then every basis $(e_n)_{n=1}^\infty$ is $g$-democratic. Indeed, let $A, B\in \mathbb{N}^{<\infty}$ with $|A|\leqslant g(|B|)$. If $|B| = 0$, then $|A| = 0$, and there is nothing to prove. Assume that $|B|\geqslant 1$ and let $\sup_{m}g(m) = s$. We have
$$\|1_A\|\ \leqslant\ |A|\mathbf c_2\ \leqslant\ s\mathbf c_2\ \leqslant\ s\mathbf c_2\sup_{n}\|e_n^*\|\|1_B\|\ \leqslant\ s\mathbf c^2_2\|1_B\|.$$
Hence, $(e_n)_{n=1}^\infty$ is $g$-democratic, as desired. 
\end{proof}

%%%%%%%%%%%%%%%%%%%%%%%%%%%%%%%%%%%%%%%%%%%%%%%%%%%%%%%%%%%%%%%%%%%%%%%%%%%%%%%%%%%%%%%%%%%%%%%%%%%%%%%%%%%%%%%%%%%%%%%%%%%%%%%%%%%%%%%%%%%%%%%%%%%%%%%%%%%%%%%%%%%%%%%%%%%%%%%%%%%%%%%%%%%%%%%%%%%%%%%%%%%%%%%%%%%%%%%%%%%%%%%%%%%%%%%%%%%%%%%%%%%%%%%%%%%%%%%%%%%%%%%%%%%%%%%%%%%%%%%%%%%%%%%%%%%%%%%%%%%%%%%%%%%%%%%%%%%%%%%%%%%%%%%%%%%%%%%%%%%%%%%%%%%%%%%%%%%%%%%%%%%%%%%%%%%%%%%%%%%%%%%%%%%%%
%%%%%%%%%%%%%%%%%%%%%%%%%%%%%%%%%%%%%%%%%%%%%%%%%%%%%%%%%%%%%%%%%%%%%%%%%%%%%%%%%%%%%%%%%%%%%%%%%%%%%%%%%%%%%%%%%%%%%%%%%%%%%%%%%%%%%%%%%%%%%%%%%%%%%%%%%%%%%%%%%%%%%%%%%%%%%%%%%%%%%%%%%%%%%%%%%%%%%%%%%%%%%%%%%%%%%%%%%%%%%%%%%%%%%%%%%%%%%%%%%%%%%%%%%%%%%%%%%%%%%%%%%%%%%%%%%%%%%%%%%%%%%%%%%%%%%%%%%%%%%%%%%%%%%%%%%%%%%%%%%%%%%%%%%%%%%%%%%%%%%%%%%%%%%%%%%%%%%%%%%%%%%%%%%%%%%%%%%%%%%%%%%%%%%%%%%%%%%%%%%%%%%%%%%%%%%%%%%%%%%%%%%%%%%%%%%%%%%%%%%%%%%%%%%%%%%%%%%%%%%%%%%%%%%%%%%%%%%%%%%%%%%%%%%%%%%%%%%%%%%%%%%%%%%%%%%%%%%%%%%%%%%%%%%%%%%%%%%%%%%%%%

\section{When $f$ varies - A modification of the Schreier space}\label{Schreier}

The main goal of this section is to study relations among $f$-(almost) greedy bases when $f$ varies. 

\begin{prop}\label{ce2}
Let $f, g\in \mathcal{F}$ be such that there exists a constant $\mathbf c > 0$ satisfying $f(m)\geqslant \mathbf c g(m)$ for all $m\in\mathbb{N}$. If $(e_n)_{n=1}^\infty$ is $f$-almost greedy, then it is $g$-almost greedy.
\end{prop}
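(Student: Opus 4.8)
The plan is to route through the structural characterization of almost greedy bases rather than to compare the error quantities $\widetilde{\sigma}$ directly, since $f(m)$ and $g(m)$ need not be comparable pointwise (the hypothesis $f\geqslant \mathbf c g$ permits $\mathbf c<1$). First I would dispose of the trivial regime $\mathbf c\geqslant 1$: there $f(m)\geqslant g(m)$ for every $m$, and since $\widetilde{\sigma}_k(x)$ is non-increasing in $k$ (enlarging the index enlarges the admissible family of projections), we get $\widetilde{\sigma}_{f(m)}(x)\leqslant\widetilde{\sigma}_{g(m)}(x)$, so the defining inequality \eqref{e11} for $f$ immediately yields the one for $g$ with the same constant. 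Hence I may assume $0<\mathbf c<1$ from now on.

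For the main regime I would convert both the hypothesis and the goal into democracy statements. By Theorem \ref{m1} i), an $f$-almost greedy basis is suppression quasi-greedy, and Proposition \ref{pe2} then makes it $f$-superdemocratic, say with constant $\Delta_{sd,f}$. Symmetrically, Proposition \ref{pe2} together with Theorem \ref{m1} ii) shows that, to conclude $g$-almost greediness, it suffices to prove the basis is $g$-superdemocratic. Thus the whole problem reduces to the \emph{democracy transfer}: quasi-greedy $+$ $f$-superdemocratic $+$ ($f\geqslant \mathbf c g$) $\Longrightarrow$ $g$-superdemocratic.

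The heart of the argument is a splitting. Given $A,B$ with $|A|\leqslant g(|B|)$ and signs $\varepsilon,\delta$, the hypothesis gives $|A|\leqslant g(|B|)\leqslant f(|B|)/\mathbf c$. When $f(|B|)\geqslant 1$ I would partition $A$ into $k:=\lceil 2/\mathbf c\rceil$ blocks each of size at most $\lfloor f(|B|)\rfloor\leqslant f(|B|)$; this is possible because $k\lfloor f(|B|)\rfloor\geqslant \frac{k}{2}f(|B|)\geqslant f(|B|)/\mathbf c\geqslant |A|$, using $\lfloor t\rfloor\geqslant t/2$ for $t\geqslant 1$. Applying $f$-superdemocracy to each block against $B$ and summing via the triangle inequality yields $\|1_{\varepsilon A}\|\leqslant k\,\Delta_{sd,f}\|1_{\delta B}\|$, a bound whose constant depends only on $\mathbf c$ and the basis. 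The remaining degenerate case $f(|B|)<1$ forces $|A|\leqslant g(|B|)<1/\mathbf c$, so $|A|$ is bounded by $\lceil 1/\mathbf c\rceil$; here I would not split but use semi-normalization, $\|1_{\varepsilon A}\|\leqslant|A|\mathbf c_2$, together with the elementary lower bound $\|1_{\delta B}\|\geqslant 1/\mathbf c_2$ (valid once $B\neq\emptyset$, from $1=|e_j^*(1_{\delta B})|\leqslant\|e_j^*\|\|1_{\delta B}\|$ for any $j\in B$), to get $\|1_{\varepsilon A}\|\leqslant \lceil 1/\mathbf c\rceil\,\mathbf c_2^2\|1_{\delta B}\|$; the case $B=\emptyset$ forces $A=\emptyset$ and is vacuous.

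The main obstacle is purely the bookkeeping in this splitting step — in particular guaranteeing that each block really has size at most $f(|B|)$ despite the floor, and isolating the small-threshold regime $f(|B|)<1$, where $f$-superdemocracy carries no information and must be replaced by semi-normalization. Once $g$-superdemocracy is established with constant $\max\{k\,\Delta_{sd,f},\,\lceil 1/\mathbf c\rceil\,\mathbf c_2^2\}$, Proposition \ref{pe2} upgrades it to Property (A, $g$), and Theorem \ref{m1} ii) delivers $g$-almost greediness, completing the proof.
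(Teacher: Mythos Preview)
Your proof is correct and follows essentially the same route as the paper: reduce via Theorem \ref{m1} and Proposition \ref{pe2} to a democracy transfer, then split $A$ into $\lceil 2/\mathbf c\rceil$ pieces each small enough for $f$-(super)democracy, with a separate semi-normalization argument when $f(|B|)$ is too small for the splitting to bite. The only cosmetic differences are that the paper works with $f$-democracy rather than $f$-superdemocracy, uses the threshold $f(|B|)\leqslant 2$ versus your $f(|B|)<1$, and bounds the block sizes via $|A|/s+1$ rather than via $\lfloor f(|B|)\rfloor$; your separate treatment of $\mathbf c\geqslant 1$ through monotonicity of $\widetilde{\sigma}$ is a clean shortcut the paper omits.
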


\begin{proof} Let $f, g\in \mathcal{F}$ such that there exists $\mathbf c > 0$ satisfying $f(m)\geqslant \mathbf cg(m)$ for all $m\in\mathbb{N}$.
Due to Corollary \ref{c1}, it suffices to prove that a $\Delta$-$f$-democratic basis is $g$-democratic. Let $A, B\in \mathbb{N}^{<\infty}$ with $|A|\leqslant g(|B|)$. We have $$|A|\ \leqslant\ g(|B|)\ \leqslant\ \frac{1}{\mathbf c}f(|B|).$$ 
If $\frac{1}{\mathbf c}\leqslant 1$, then by $f$-democracy, we obtain 
$\|1_A\|\leqslant \Delta\|1_B\|$. Suppose that $\frac{1}{\mathbf c} > 1$. We consider two cases.

Case 1: $f(|B|) \leqslant 2$. Then $|A|\leqslant \frac{2}{\mathbf c}$. We have
$$\|1_A\|\ \leqslant\ |A|\mathbf c_2\ \leqslant\ \frac{2}{\mathbf c}\mathbf c_2 \ \leqslant\ \frac{2}{\mathbf c}\mathbf c_2\sup_{n}\|e_n^*\|\|1_B\|\ \leqslant\ \frac{2\mathbf c_2^2}{\mathbf c}\|1_B\|.$$

Case 2: $f(|B|) > 2$. Partition $A$ into $s:= \lceil \frac{2}{\mathbf c}\rceil$ disjoint subsets $A_1, \ldots, A_s$ such that for all $1\leqslant i\leqslant s$,  
$$|A_i| \ \leqslant\ \frac{|A|}{s} + 1\ \leqslant\ \frac{\mathbf c|A|}{2} + 1\ \leqslant\ \frac{f(|B|)}{2} + 1\ <\ f(|B|).$$
By $f$-democracy, we get
$$\|1_{A_i}\|\ \leqslant\ \Delta\|1_B\|, \forall 1\leqslant i\leqslant s.$$
Therefore, 
$$\|1_A\|\ \leqslant\ s\Delta\|1_B\|.$$

In general, we have shown that
$$\|1_A\|\ \leqslant\ \max\left\{\frac{2\mathbf c_2^2}{\mathbf c}, \left\lceil \frac{2}{\mathbf c}\right\rceil\Delta\right\}\|1_B\|,$$
and hence, $(e_n)_{n=1}^\infty$ is $g$-democratic.
\end{proof}

\begin{cor}\label{c22}
Let $f, g\in \mathcal{F}$ be such that $0 < \inf_{m\in\mathbb{N}} \frac{f(m)}{g(m)} \leqslant \sup_{m\in \mathbb{N}} \frac{f(m)}{g(m)} < \infty$, then a basis is $f$-almost greedy if and only if it is $g$-almost greedy. 
\end{cor}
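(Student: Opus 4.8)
The plan is to deduce Corollary~\ref{c22} directly from Proposition~\ref{ce2} by applying it twice, once in each direction. The hypothesis $0 < \inf_{m\in\mathbb{N}}\frac{f(m)}{g(m)} \leqslant \sup_{m\in\mathbb{N}}\frac{f(m)}{g(m)} < \infty$ is symmetric in $f$ and $g$: writing $\mathbf c_\ast := \inf_{m}\frac{f(m)}{g(m)}$ and $\mathbf c^\ast := \sup_m \frac{f(m)}{g(m)}$, both are finite and positive, so $f(m) \geqslant \mathbf c_\ast\, g(m)$ for all $m\in\mathbb{N}$ and $g(m) \geqslant \frac{1}{\mathbf c^\ast}\, f(m)$ for all $m\in\mathbb{N}$.

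First I would note that $f(m)\geqslant \mathbf c_\ast\, g(m)$ for every $m$ with $\mathbf c_\ast > 0$, so Proposition~\ref{ce2} applies with the constant $\mathbf c = \mathbf c_\ast$ and yields: if $(e_n)_{n=1}^\infty$ is $f$-almost greedy, then it is $g$-almost greedy. For the reverse implication, I would observe that $g(m)\geqslant \frac{1}{\mathbf c^\ast}\, f(m)$ for every $m$ with $\frac{1}{\mathbf c^\ast} > 0$, so Proposition~\ref{ce2} applies again — this time with the roles of $f$ and $g$ interchanged and with the constant $\mathbf c = \frac{1}{\mathbf c^\ast}$ — and yields: if $(e_n)_{n=1}^\infty$ is $g$-almost greedy, then it is $f$-almost greedy. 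Combining the two implications gives the claimed equivalence.

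There is essentially no obstacle here; the only point requiring a word of care is that Proposition~\ref{ce2} is stated with an inequality $f(m)\geqslant \mathbf c\, g(m)$ holding for \emph{all} $m\in\mathbb{N}$, and one must check that the infimum/supremum hypothesis delivers exactly such pointwise bounds with a uniform constant in each direction — which it does, by definition of $\inf$ and $\sup$, precisely because both quantities are assumed finite and strictly positive. (If, say, $\inf_m \frac{f(m)}{g(m)}$ were $0$, one direction would fail; if $\sup_m \frac{f(m)}{g(m)}$ were $\infty$, the other would.) So the corollary is immediate.

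\begin{proof}
Set $\mathbf c_\ast := \inf_{m\in\mathbb{N}}\frac{f(m)}{g(m)}$ and $\mathbf c^\ast := \sup_{m\in\mathbb{N}}\frac{f(m)}{g(m)}$; by hypothesis, $0 < \mathbf c_\ast \leqslant \mathbf c^\ast < \infty$. Then $f(m)\geqslant \mathbf c_\ast\, g(m)$ for all $m\in\mathbb{N}$, so by Proposition~\ref{ce2}, if $(e_n)_{n=1}^\infty$ is $f$-almost greedy, then it is $g$-almost greedy. Conversely, $g(m)\geqslant \frac{1}{\mathbf c^\ast}\, f(m)$ for all $m\in\mathbb{N}$ with $\frac{1}{\mathbf c^\ast} > 0$, so applying Proposition~\ref{ce2} with the roles of $f$ and $g$ exchanged shows that if $(e_n)_{n=1}^\infty$ is $g$-almost greedy, then it is $f$-almost greedy. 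This proves the equivalence.
\end{proof}
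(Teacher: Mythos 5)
Your proof is correct and is exactly the argument the paper intends: the paper's entire proof is ``Use Proposition~\ref{ce2},'' and you have simply spelled out the two applications (with constants $\mathbf c_\ast$ and $1/\mathbf c^\ast$) that make that one-liner work.
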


\begin{proof}
Use Proposition \ref{ce2}.
\end{proof}

We now turn our attention to a notable example (in \cite{BDKOW}) of a PG basis that is not almost greedy. We shall modify the basis there to prove the following theorem
\begin{thm}\label{m2}
i) Let $f, g\in \mathcal{F}$ such that $\sup_{m\in \mathbb{N}}\frac{f(m)}{g(m)} = \infty$. Then there exists a $1$-unconditional basis that is $g$-greedy but is not $f$-almost greedy. 

ii) There exists an $1$-unconditional basis that is not $f$-almost greedy for all unbounded $f\in \mathcal{F}$.
\end{thm}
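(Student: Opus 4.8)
The plan is to construct, for both parts, a single modification of the Schreier-type space from \cite{BDKOW} in which the ``obstruction to democracy'' can be tuned by a weight sequence. The natural candidate is a space with norm
$$\|x\|\ =\ \max\left\{\|x\|_{c_0},\ \sup_{k}\ \frac{1}{w_k}\sup_{\substack{|F|\leqslant k\\ F\ \mathrm{admissible}}}\left|\sum_{n\in F}e_n^*(x)\right|\right\},$$
where the blocks and the weights $w_k$ are chosen so that the canonical basis is $1$-unconditional (the norm is visibly a sup of absolute-value functionals, so disjoint/sign changes and coordinate projections can only decrease it). The key feature is that democracy of a collection of $k$-element sets against singletons is governed precisely by the ratio $k/w_k$: flat vectors on $k$ coordinates in a ``long'' block have norm comparable to $k/w_k$, while a single coordinate (or a short admissible set) has norm $\mathcal{O}(1)$. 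So the growth rate of $w_k$ controls exactly which functions $f$ the basis can be $f$-democratic for.

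\textbf{Part i).} First I would fix $f,g\in\mathcal F$ with $\sup_m f(m)/g(m)=\infty$ and choose the weights so that the basis is $g$-democratic but not $f$-democratic. Concretely, arrange that for $k\leqslant g(m)$, flat vectors $1_A$ with $|A|=k$ supported in the block of length $m$ satisfy $\|1_A\|\lesssim \|1_B\|$ for any $|B|=m$ (this is where one needs $g\in\mathcal F$ and Proposition \ref{p1}: the admissible-set structure must be set up so that a $g(m)$-element flat vector is always dominated by an $m$-element flat vector, uniformly). Then I would check $g$-democracy directly from the norm formula and invoke Corollary \ref{c1} (together with $1$-unconditionality $\Rightarrow$ quasi-greedy) to conclude the basis is $g$-greedy. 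For the failure of $f$-almost greediness, I would exhibit, using $\sup_m f(m)/g(m)=\infty$, sets $A_m, B_m$ with $|A_m|\leqslant f(|B_m|)$ but $\|1_{A_m}\|/\|1_{B_m}\|\to\infty$; by Corollary \ref{c1} a quasi-greedy $f$-almost greedy basis would be $f$-democratic, so this ratio blowup rules it out. (One small subtlety: $f$-almost greediness must be contradicted directly, but since the basis is unconditional hence quasi-greedy, Corollary \ref{c1}'s equivalence of $f$-almost greedy with $f$-democracy for quasi-greedy bases applies cleanly.)

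\textbf{Part ii).} Here I would take the weights to grow as slowly as possible — e.g. $w_k=k/\log(k+1)$ or even $w_k$ essentially constant on a rapidly sparsifying sequence of scales — so that the democracy ratio of $k$-element flat vectors against singletons tends to infinity as slowly as we like, but still tends to infinity. Then, given \emph{any} unbounded $f\in\mathcal F$, one has $f(m)\to\infty$, so picking $m$ large produces admissible flat vectors of size $f(m)$ whose norm beats that of any $m$-element flat vector by an arbitrarily large factor; again Corollary \ref{c1} converts this into failure of $f$-almost greediness. The point is that a single fixed basis works against all unbounded $f$ simultaneously, because the obstruction is a genuine (if slow) divergence.

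\textbf{Main obstacle.} The routine parts are the $1$-unconditionality and the upper/lower norm estimates for flat vectors. The delicate part is designing the admissible family and the weight sequence so that, \emph{simultaneously}, (a) in Part i) the basis is exactly $g$-democratic — the upper democracy estimate for $g(m)$-element sets must hold with a \emph{uniform} constant over all $m$, which forces a careful interaction between the combinatorics of admissible sets and the fact, from Proposition \ref{p1}, that $g(m)\leqslant m$ and $g(m)/m$ is decreasing (so one can split a $g(m)$-element set across several blocks if needed); and (b) the failure of $f$-democracy is witnessed by sets of size $\leqslant f(|B|)$, not merely $\leqslant |B|$, which is automatic once $f(m)/g(m)\to\infty$ but still requires keeping track of the relationship between the block length $m$ and the admissibility threshold. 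Getting a \emph{single} space to do this for arbitrary $f,g$ (rather than one space per pair) is the real content; I expect to handle it by letting the block lengths grow along a sequence $(m_j)$ tailored to $g$ and making the weight on the length-$m_j$ block equal to $g(m_j)$ (suitably rounded), so that $\|1_A\|\approx |A|/g(m_j)$ for $A$ inside that block.
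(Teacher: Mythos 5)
Your part i) follows the paper's route in outline (a Schreier-type space whose canonical basis is $g$-democratic but not $f$-democratic, combined with Corollary \ref{c1}), but two points need repair. First, the norm you wrote uses $\left|\sum_{n\in F}e_n^*(x)\right|$, which is not sign-invariant; you need $\sum_{n\in F}|e_n^*(x)|$ for $1$-unconditionality. Second, and more seriously, tying the weight to the \emph{block} containing $A$ (so that $\|1_A\|\approx |A|/g(m_j)$ inside block $j$) destroys the uniform lower bound you need for $g$-democracy: a set $B$ sitting at the start of a much longer block $j'$ has $\|1_B\|\approx |B|/g(m_{j'})$, which can be made arbitrarily small relative to $\|1_A\|$ for an $A$ filling an earlier block with $|A|\leqslant g(|B|)$. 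The paper avoids this by making admissibility depend only on $\min A$ (namely $|A|\leqslant g(\min A)/g(1)$) and using the unweighted norm $\sup_{A\in\mathcal A}\sum_{n\in A}|x_n|$; that admissibility is monotone under right-shifts, so every $B$ satisfies $\|1_B\|\geqslant\|1_{\{1,\dots,|B|\}}\|\gtrsim g(|B|)$, while trivially $\|1_A\|\leqslant|A|$. You also leave out the case of bounded $g$ (which occurs, e.g.\ $g(x)=x/(x+1)$): there your weights are bounded, the space degenerates, and no witness against $f$-democracy survives; the paper handles this case separately by passing to $h=\sqrt f$ and noting that every basis is $g$-democratic when $g$ is bounded.

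The real gap is in part ii): the mechanism ``a genuine (if slow) divergence of the democracy ratio'' cannot work. Write $\phi(k)=\sup_{|A|=k}\|1_A\|$ and $\psi(m)=\inf_{|B|=m}\|1_B\|$; for an unconditional basis, failure of $f$-almost greediness amounts (by Corollary \ref{c1}) to failure of $f$-democracy, i.e.\ $\sup_m\phi(f(m))/\psi(m)=\infty$. In any construction of your type every set of cardinality $m$ contains suitably positioned admissible subsets of size comparable to $m$, so $\psi(m)\to\infty$. But whenever $\psi$ is unbounded one can build an unbounded $f\in\mathcal F$ (a smooth concave increasing unbounded minorant of $m\mapsto\sup\{k:\phi(k)\leqslant\psi(m)\}$, rescaled so that $f(1)\leqslant 1$) for which $\phi(f(m))\leqslant C\psi(m)$ for all $m$; your basis is then $f$-democratic, hence $f$-almost greedy, for that unbounded $f$. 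To defeat \emph{all} unbounded $f$ simultaneously you need $\psi$ \emph{bounded}: flat vectors of every cardinality with uniformly bounded norm. That is exactly what the paper's example for ii) provides --- the interleaved canonical bases of $(\ell_1\oplus c_0)_{\ell_1}$, where $\|1_B\|=1$ on the $c_0$ coordinates while $\|1_A\|=|A|$ on the $\ell_1$ coordinates, so the witnessing ratio is $\lfloor f(m)\rfloor\to\infty$ for every unbounded $f$.
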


Let us generalize the construction in \cite{BDKOW} to any function $f\in \mathcal{F}$. Define $$\mathcal{A} = \left\{A\in\mathbb{N}^{<\infty}: A\neq\emptyset \mbox{ and }\frac{1}{f(1)}f(\min A) \geqslant |A|\right\}$$ (thus all singletons are in $\mathcal{A}$) and $X_f$ be the completion of $c_{00}$ under the following norm: for $x = (x_1, x_2, \ldots)$, 
$$\|x\|\ =\ \sup_{A\in \mathcal{A}}\sum_{n\in A}|x_n|.$$
Let $(e_n)_{n=1}^\infty$ be the normalized canonical basis, which is clearly $1$-unconditional.

\begin{lem}\label{l2}
For all $N\in\mathbb{N}$, it holds that
$$\frac{1}{2} f(N)-1\ \leqslant\ \left\|\sum_{n=1}^N e_n\right\| \ \leqslant\ \frac{1}{f(1)} f(N).$$
\end{lem}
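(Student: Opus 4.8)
\textbf{Proof proposal for Lemma \ref{l2}.}

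The plan is to estimate $\|\sum_{n=1}^N e_n\|$ by directly analyzing the admissible sets $A \in \mathcal{A}$ that can be used in the supremum. Write $S_N := \{1, 2, \ldots, N\}$; then $\|\sum_{n=1}^N e_n\| = \sup_{A \in \mathcal{A}} |A \cap S_N|$, since each coordinate of $\sum_{n=1}^N e_n$ is $1$ on $S_N$ and $0$ elsewhere. So the lemma reduces to a purely combinatorial question: how large can $|A \cap S_N|$ be for $A \in \mathcal{A}$?

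For the upper bound, first I would reduce to the case $A \subseteq S_N$ (replacing $A$ by $A \cap S_N$ only decreases $|A|$ and can only increase $\min A$, hence preserves membership in $\mathcal{A}$ by monotonicity of $f$). For such $A$ we have $\min A \geqslant 1$, so the constraint $|A| \leqslant \frac{1}{f(1)} f(\min A) \leqslant \frac{1}{f(1)} f(N)$ gives immediately $|A \cap S_N| = |A| \leqslant \frac{1}{f(1)} f(N)$, using Proposition \ref{p1}(i) (or just monotonicity) for $f(\min A) \leqslant f(N)$. This is the easy half.

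For the lower bound, I would exhibit a single good set $A \subseteq S_N$. The natural choice is an interval $A = \{k, k+1, \ldots, N\}$ starting at some well-chosen $k$; its size is $N - k + 1$ and its minimum is $k$, so it lies in $\mathcal{A}$ precisely when $N - k + 1 \leqslant \frac{1}{f(1)} f(k)$, i.e. $N + 1 \leqslant k + \frac{1}{f(1)} f(k) =: \varphi(k)$. Since $f(1) \leqslant 1$ we have $\frac{1}{f(1)} f(k) \geqslant f(k)$, and more usefully I want to pick $k$ so that both $k$ and $\frac{1}{f(1)}f(k)$ are roughly $\frac12 f(N)$-ish — but the cleanest route is: choose $k = \lfloor \tfrac{1}{2} f(N) \rfloor$ (note $k \leqslant \tfrac12 f(N) \leqslant \tfrac12 N \leqslant N$ by Proposition \ref{p1}(ii), so $A$ is nonempty and contained in $S_N$), and then check $N - k + 1 \leqslant \frac{1}{f(1)} f(k)$. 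Here is where concavity enters: since $f$ is concave with $f(0) = 0$, for $k \leqslant N$ we have $f(k) \geqslant \frac{k}{N} f(N)$, so $\frac{1}{f(1)} f(k) \geqslant f(k) \geqslant \frac{k}{N} f(N)$. Hmm, that lower bound on $f(k)$ is too weak when $k \approx \frac12 f(N)$ is much smaller than $N$; I would instead use that $\frac{f(x)}{x}$ is decreasing (Proposition \ref{p1}(i)), so $\frac{f(k)}{k} \geqslant \frac{f(N)}{N}$ is again the same weak bound. The fix: compare $f(k)$ with $f(1)$ via concavity on $[1,k]$ differently, or — more simply — just verify $A = \{k,\ldots,N\}$ works with $k$ chosen as the \emph{largest} integer with $k + \frac{1}{f(1)}f(k) \geqslant N+1$; since $\varphi(k) = k + \frac{1}{f(1)}f(k)$ is continuous, strictly increasing, $\varphi(0) = 0$ (wait, $\varphi(1) = 1 + \frac{f(1)}{f(1)} = 2$) and $\varphi(N) = N + \frac{f(N)}{f(1)} \geqslant N + f(N) \geqslant N+1$, such $k \geqslant 1$ exists, and by maximality $\varphi(k+1) < N+1$, i.e. $k \geqslant N - \frac{1}{f(1)}f(k+1)$. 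Combined with $\frac{1}{f(1)}f(k+1) \leqslant \frac{1}{f(1)} 2 f(k)$ is not quite it either.

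The honest simplest argument for the lower bound: take $A = \{k, k+1, \ldots, N\}$ where $k$ is chosen so that $|A| = N-k+1$ is as large as possible subject to $N - k + 1 \leqslant \frac{1}{f(1)}f(k)$. Since $f(1) \leqslant 1$, $\frac{1}{f(1)}f(k) \geqslant f(k)$, and it suffices to find $k$ with $N - k + 1 \leqslant f(k)$. Using concavity in the form $f(k) + f(N) \leqslant f(1) + f(k + N - 1) \leqslant 1 + (k+N-1)$ is the wrong direction; the right inequality is $f(k) \geqslant f(N) - f(N-k+1) + f(1) \cdot 0$... Let me just go with: concavity gives, for $0 \le k \le N$, $f(N) - f(k) \le f(N - k)$ (a standard subadditive-type consequence of concavity with $f(0)=0$: $f(N) = f((N-k) + k) \le f(N-k) + f(k)$ is exactly this). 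Hence $f(k) \ge f(N) - f(N-k)$. Now pick $k$ with $N - k + 1 = \lceil \tfrac12 f(N)\rceil$ when this is $\le N$ (true since $f(N) \le N$), so $N - k \le \tfrac12 f(N)$ and thus $f(N-k) \le f(\tfrac12 f(N)) \le \tfrac12 f(N)$ (as $f(\tfrac12 t) \le \tfrac12 f(t) + \tfrac12 f(0) = \tfrac12 f(t)$ by concavity). Therefore $f(k) \ge f(N) - \tfrac12 f(N) = \tfrac12 f(N) \ge N - k + 1 - 1$, so $N-k+1 \le f(k) + 1 \le \frac{1}{f(1)} f(k) + 1$; rearranging, if I instead demand $N-k+1 = \lceil \tfrac12 f(N) \rceil - 1$ I get $\|{\sum e_n}\| \ge |A| = N - k + 1 \ge \tfrac12 f(N) - 1$. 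The main obstacle is bookkeeping the floors/ceilings and the $\frac{1}{f(1)}$ versus $1$ slack cleanly so that exactly the claimed constants $\tfrac12 f(N) - 1$ and $\frac{1}{f(1)} f(N)$ come out; the key analytic input is the concavity inequalities $f(a+b) \le f(a) + f(b)$ and $f(\tfrac12 t) \le \tfrac12 f(t)$, both immediate from concavity and $f(0) = 0$.
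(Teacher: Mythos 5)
Your upper bound is correct and is exactly the paper's argument. For the lower bound, your overall strategy (exhibit an interval $A=\{k,\ldots,N\}$ in $\mathcal{A}$ of size about $\tfrac12 f(N)$) is also in the spirit of the paper's proof, and the subadditivity $f(a+b)\leqslant f(a)+f(b)$ that you extract from concavity and $f(0)=0$ is valid. But the step you single out as a ``key analytic input,'' namely $f(\tfrac12 t)\leqslant\tfrac12 f(t)$ ``by concavity,'' is false: concavity together with $f(0)=0$ gives the \emph{reverse} inequality $f(\tfrac12 t)=f(\tfrac12 t+\tfrac12\cdot 0)\geqslant\tfrac12 f(t)+\tfrac12 f(0)=\tfrac12 f(t)$ (try $f(x)=\sqrt{x}$: $f(t/2)=\sqrt{t}/\sqrt{2}>\sqrt{t}/2$). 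So your justification of $f(N-k)\leqslant\tfrac12 f(N)$ does not stand as written. The estimate you actually need is $f(u)\leqslant u$ at $u=\tfrac12 f(N)$, which is Proposition~\ref{p1}(ii) and holds only for $u\geqslant 1$; you must therefore add the (trivial) case $f(N)<2$, where $\tfrac12 f(N)-1<0\leqslant 1\leqslant\|\sum_{n=1}^N e_n\|$ because singletons lie in $\mathcal{A}$. With that repair, and after cleaning up the floor/ceiling bookkeeping (your final choice $|A|=\lceil\tfrac12 f(N)\rceil-1$ does work, since then $f(k)\geqslant f(N)-f(N-k)\geqslant\tfrac12 f(N)\geqslant |A|$ and $\tfrac{1}{f(1)}f(k)\geqslant f(k)$), the argument goes through.

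For comparison, the paper avoids this issue entirely by choosing $k$ \emph{minimal} with $\{k+1,\ldots,N\}\in\mathcal{A}$, so that minimality gives the reverse inequality $f(k)<f(1)(N-k+1)$ for free; it then splits on $k>N/2$ versus $k\leqslant N/2$ and only ever uses the \emph{correct} direction of concavity, $f(N/2)\geqslant\tfrac12 f(N)$, together with $f(N/2)\leqslant N/2$. That route needs no ceiling manipulations and is the cleaner way to land exactly on the constant $\tfrac12 f(N)-1$.
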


\begin{proof}
Let $A\subset \{1, 2, \ldots, N\}$ and $A\in \mathcal{A}$. Then $\min A\leqslant N$. Because $f$ is increasing, 
$$\sum_{m\in A}\left|e_m^*\left(\sum_{n=1}^N e_n\right)\right|\ =\ |A|\ \leqslant\  \frac{1}{f(1)}f(\min A)\ \leqslant\ \frac{1}{f(1)}f(N).$$
We conclude that $\left\|\sum_{n=1}^N e_n\right\| \ \leqslant\ \frac{1}{f(1)} f(N)$.

Next, we establish a lower bound. Choose a nonnegative integer $k\leqslant N-1$ to be the smallest such that $A=\{k+1, \ldots, k+(N-k)\}\in \mathcal{A}$, i.e., $f(k+1)\geqslant f(1)(N-k)$. If $k = 0$, then $|A| = N\geqslant f(N)$. Suppose that $k\geqslant 1$. Then $f(k) < f(1)(N-k+1)$. We consider two cases.

Case 1: if $k > N/2$, then 
$$|A|\ =\ N-k \ >\ \frac{f(k)}{f(1)}-1\ \geqslant\ \frac{f(N/2)}{f(1)}-1\ \geqslant\ \frac{1}{2f(1)}f(N)-1.$$

Case 2: if $k \leqslant N/2$, then $N\geqslant 2$ because $k\geqslant 1$. By Proposition \ref{p1} item ii),
$$|A|\ = \ N-k\ \geqslant \ \frac{N}{2}\ \geqslant \ f\left(\frac{N}{2}\right)\ \geqslant\ \frac{1}{2}f(N).$$

Therefore, $\left\|\sum_{n=1}^N e_n\right\| \ \geqslant\ \frac{1}{2} f(N)-1$.
\end{proof}

\begin{proof}[Proof of Theorem \ref{m2}]
We prove item i). Let $f, g\in \mathcal{F}$ such that $\sup_{m\in \mathbb{N}}\frac{f(m)}{g(m)} = \infty$. Clearly, $f$ is unbounded. We first assume that $g$ is also unbounded. We claim that the canonical basis of $X_{g}$ is $g$-greedy but is not $f$-almost greedy. 

\underline{$(e_n)_{n=1}^\infty$ is $g$-greedy}: let $A, B\in\mathbb{N}^{<\infty}$ with $|A|\leqslant g(|B|)$. By Lemma \ref{l2}, we get 
$$\|1_B\|\ \geqslant\ \left\|\sum_{n=1}^{|B|}e_n\right\|\ \geqslant\ \frac{1}{2}g(|B|)-1\ \geqslant\ \frac{1}{2}|A|-1\ \geqslant\ \frac{1}{2}\|1_A\|-1.$$
Hence, 
$$\frac{1}{2}\|1_A\|\ \leqslant\ \|1_B\| + 1\ \leqslant\ \|1_B\| + \sup_{n}\|e_n^*\|\|1_B\|\ =\ (\mathbf c_2 + 1)\|1_B\|.$$
Therefore, $(e_n)_{n=1}^\infty$ is $g$-democratic. By unconditionality and Corollary \ref{c1}, $(e_n)_{n=1}^\infty$ is $g$-greedy.

\underline{$(e_n)_{n=1}^\infty$ is not $f$-almost greedy}: let $m\in \mathbb{N}$, $B = \{1, 2, \ldots, m\}$ and $A\subset\mathbb{N}^{<\infty}$ such that $|A| = \lfloor f(m) \rfloor$ and $\frac{1}{g(1)}g(\min A) \geqslant |A|$ (which is possible since $\lim_{x\rightarrow\infty} g(x) = \infty$). By Lemma \ref{l2}, 
$$\|1_B\| \ \leqslant\ \frac{1}{f(1)}g(m)\mbox{ and }\|1_A\| \ =\ \lfloor f(m) \rfloor,$$
which implies that
$$\frac{\|1_A\|}{\|1_B\|}\ \geqslant \ f(1)\frac{\lfloor f(m) \rfloor}{g(m)}.$$
Since $\sup_{m\in \mathbb{N}}\frac{f(m)}{g(m)} = \infty$, we do not have $f$-democracy. By Theorem \ref{m1} and Proposition \ref{pe2}, $(e_n)_{n=1}^\infty$ is not $f$-almost greedy.

In the case when $g$ is bounded, define $h = \sqrt{f}\in \mathcal{F}$ because $\mathcal{F}$ is closed under compositions. Note that $h$ is unbounded. Using the same argument as above, the canonical basis of $X_h$ is $h$-greedy but is not $f$-almost greedy. By the proof of Corollary \ref{c4}, the canonical basis of $X_h$ is $g$-democratic, and according to Corollary \ref{c1}, the basis is $g$-greedy.

Next, we prove item ii). Consider the space $(\ell_1 \oplus c_{0})_{\ell_1}$ with the basis $\mathcal{X} = (x_n)_{n=1}^\infty$ defined as
$$x_n \ =\ \begin{cases}e_{k}&\mbox{ if } n = 2k-1,\\ f_k &\mbox{ if }n = 2k.\end{cases}$$
where $(e_n)_{n=1}^\infty$ and $(f_n)_{n=1}^\infty$ are the canonical bases of $\ell_1$ and $c_0$, respectively. Clearly, $\mathcal{X}$ is $1$-unconditional. Let $f\in \mathcal{F}$ be unbounded. Then $\mathcal{X}$ is not $f$-disjoint democratic. Indeed, pick $m\in\mathbb{N}$, $A = \{1, 3, \ldots, 2\lfloor f(m)\rfloor-1\}$, and $B = \{2, 4, \ldots, 2m\}$. It follows that
$\frac{\|1_A\|}{\|1_B\|} = \lfloor f(m)\rfloor\rightarrow\infty$ as $m\rightarrow\infty$. Hence, $\mathcal{X}$ is not $f$-almost greedy.
\end{proof}

\begin{rek}\normalfont We consider the PG basis in \cite[Proposition 6.10]{BDKOW}, which is the canonical basis of $X_f$, when $f$ is the square-root function. 
According the proof of Theorem \ref{m2}, we know that the basis is $f$-greedy though it is not almost greedy. Consequently, though some PG bases are not almost greedy, they can be $f$-greedy for a well-chosen $f$. Since the notion of being $f$-greedy is order-free, this provides a partial solution to the concern of some researchers regarding the order-dependent nature of PG bases. 
\end{rek}

\begin{cor}
Let $f, g\in \mathcal{F}$. 
\begin{enumerate}
    \item[i)] If $\sup_{m\in\mathbb{N}} \frac{f(m)}{g(m)} < \infty$, then the canonical basis of $X_g$ is $f$-greedy.
    \item[ii)] If $\sup_{m\in \mathbb{N}}\frac{f(m)}{g(m)} = \infty$ and $g$ is unbounded, then the canonical basis of $X_g$ is not $f$-almost greedy.
    \item[iii)] If $g$ is bounded, then the canonical basis of $X_g$ is equivalent to the canonical basis of $c_0$. In this case, the canonical basis of $X_g$ is $f$-greedy for all $f\in \mathcal{F}$.
\end{enumerate}
\end{cor}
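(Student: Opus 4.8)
The plan is to prove the three items of the corollary by specializing the results from Section \ref{Schreier} and Lemma \ref{l2} to the canonical basis $(e_n)_{n=1}^\infty$ of $X_g$, which is already known to be $1$-unconditional. For item i), suppose $\sup_{m\in\mathbb{N}}\frac{f(m)}{g(m)} =: c < \infty$. First I would observe that this means $g(m) \geqslant \frac{1}{c} f(m)$ for all $m$, so by Proposition \ref{ce2} it suffices to show that the canonical basis of $X_g$ is $g$-greedy; but this is exactly the content of the first half of the proof of Theorem \ref{m2} item i) (the $g$-democracy estimate via Lemma \ref{l2}, combined with $1$-unconditionality and Corollary \ref{c1}), which works whether or not $g$ is bounded --- if $g$ is bounded, Corollary \ref{c4} and the proof of Corollary \ref{c4} give $g$-democracy directly, and then again Corollary \ref{c1} applies. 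So item i) is essentially a repackaging: $f$-greedy follows from $g$-greedy plus $g \gtrsim f$, which is Proposition \ref{ce2} applied on the greedy side (here one should note Proposition \ref{ce2} is stated for almost greedy, but via Corollary \ref{c1} the $f$-greedy and $f$-almost greedy notions coincide for non-identity $f$, and for $f$ the identity one invokes that $g$-greedy with $g$ bounded below by a constant multiple of $m$ means the basis is almost greedy hence greedy).

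For item ii), assume $\sup_{m\in\mathbb{N}}\frac{f(m)}{g(m)} = \infty$ and $g$ is unbounded. This is verbatim the second half of the proof of Theorem \ref{m2} item i): I would take $B = \{1,\dots,m\}$ and $A$ a set of size $\lfloor f(m)\rfloor$ chosen far enough to the right that $A \in \mathcal{A}$ (possible since $g$ is unbounded), use Lemma \ref{l2} to get $\|1_B\| \leqslant \frac{1}{g(1)} g(m)$ (correcting the typo $f(1)$ to $g(1)$ present in the Theorem \ref{m2} proof) and $\|1_A\| = \lfloor f(m)\rfloor$, so that $\frac{\|1_A\|}{\|1_B\|} \geqslant g(1)\frac{\lfloor f(m)\rfloor}{g(m)} \to \infty$ along the subsequence realizing the supremum. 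Hence the basis is not $f$-democratic, and by Theorem \ref{m1} together with Proposition \ref{pe2} it is not $f$-almost greedy.

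For item iii), assume $g$ is bounded, say $\sup_m g(m) = s$. Then the constraint set $\mathcal{A}$ consists only of sets $A$ with $|A| \leqslant \frac{1}{g(1)} g(\min A) \leqslant \frac{s}{g(1)}$, i.e. $\mathcal{A}$ contains only sets of uniformly bounded cardinality $L := \lfloor s/g(1)\rfloor$. Therefore $\|x\|_{X_g} = \sup_{A\in\mathcal{A}} \sum_{n\in A}|x_n|$ is, up to the constant factor $L$, comparable to $\|x\|_\infty$: precisely $\|x\|_\infty \leqslant \|x\|_{X_g} \leqslant L\|x\|_\infty$. Since $(e_n)$ is $1$-unconditional in $X_g$ and the coefficient functionals are the coordinate functionals, this norm equivalence shows the canonical basis of $X_g$ is equivalent to the canonical basis of $c_0$. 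For the last assertion, the canonical basis of $c_0$ is greedy (it is $1$-unconditional and $1$-democratic), hence $f$-greedy for every $f \in \mathcal{F}$ by Proposition \ref{ptu} (for non-identity $f$) together with the trivial fact that greedy implies $f$-greedy when $f$ is the identity; alternatively, invoke item i) with the bounded function $g$. The main point requiring care is making the norm-equivalence constant in item iii) explicit and checking that boundedness of $g$ genuinely forces $\mathcal{A}$ to have bounded cardinalities --- this uses $g(\min A) \leqslant s$ and $g(1) > 0$, both immediate from $g \in \mathcal{F}$.
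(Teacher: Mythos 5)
Your proposal is correct and follows essentially the same route as the paper: item i) reduces to the $g$-greediness (really, $g$-democracy) of the canonical basis of $X_g$ established in the proof of Theorem \ref{m2} plus the transfer in Proposition \ref{ce2}; item ii) is verbatim the second half of that proof (and you are right that the factor $\tfrac{1}{f(1)}$ there should read $\tfrac{1}{g(1)}$, since the norm of $X_g$ is built from $g$); item iii) rests on the observation that boundedness of $g$ forces every $A\in\mathcal{A}$ to satisfy $|A|\leqslant \sup_m g(m)/g(1)$, exactly as in the paper. Two small flags. First, in item i) your detour through Corollary \ref{c1} forces a separate treatment of $f=\mathrm{id}$, and there the phrase ``almost greedy hence greedy'' is not a valid implication on its own; it is rescued here only by the $1$-unconditionality of the basis (unconditional $+$ democratic $=$ greedy, Theorem \ref{ktc}). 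The paper sidesteps this case split by staying at the level of democracy throughout: $g$-democratic implies $f$-democratic by the proof of Proposition \ref{ce2}, and then unconditionality, Proposition \ref{pe2}, and Theorem \ref{mm2} yield $f$-greedy uniformly for every $f\in\mathcal{F}$, identity included. Second, in item iii) the aside ``alternatively, invoke item i) with the bounded function $g$'' does not work: if $f$ is unbounded and $g$ is bounded, then $\sup_m f(m)/g(m)=\infty$ and the hypothesis of item i) fails. Your primary argument there is fine and can be simplified further: since $f(m)\leqslant m$ for $f\in\mathcal{F}$, one has $\sigma_{f(m)}(x)\geqslant\sigma_m(x)$, so greedy implies $f$-greedy for every $f\in\mathcal{F}$ without appealing to Proposition \ref{ptu}.
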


\begin{proof}
i) Suppose that $\sup_{m\in\mathbb{N}} \frac{f(m)}{g(m)} =: \mathbf c < \infty$. Then $f(m)\leqslant \mathbf c g(m), \forall m\in\mathbb{N}$. Since the canonical basis $(e_n)_{n=1}^\infty$ of $X_g$ is $g$-greedy according to the proof of Theorem \ref{m2}, Proposition \ref{ce2} states that $(e_n)_{n=1}^\infty$ is $f$-democratic. By unconditionality, Theorem \ref{mm2}, and Proposition \ref{pe2}, $(e_n)_{n=1}^\infty$ is $f$-greedy.

ii) Use the same reasoning as in the proof of Theorem \ref{m2}.

iii) Suppose that $\sup_{x\geqslant 0} g(x) = \mathbf c$. Let $(e_n)_{n=1}^\infty$ be the canonical basis of $X_g$. For all $A\in\mathbb{N}^{<\infty}$ and sign $\varepsilon$, we have $\|1_{\varepsilon A}\|\leqslant \frac{\mathbf c}{g(1)}$. Hence, $(e_n)_{n=1}^\infty$ is equivalent to the canonical basis of $c_0$.
\end{proof}

\section{Examples}\label{ex}

\subsection{The (normalized) multivariate Haar basis} 

Consider the multivariate Haar basis $\mathcal{H}^d_p$ of $L^p([0,1]^d)$ for $1 < p < \infty$. A famous result by Temlyakov \cite{T} states that the univariate $\mathcal{H}^{1}_p$ is a greedy basis of $L^p[0,1]$. However, the same conclusion does not hold for the general multivariate basis $\mathcal{H}_p^d = \mathcal{H}_p\times \cdots \mathcal{H}_p$. While $\mathcal{H}_p^d = \{h_n\}$ is an unconditional basis for $L^p([0,1]^d)$, the system is not democratic and thus, not greedy \cite[(1.3)]{T2}. However, one has the following result from \cite{KPT} (see also \cite[Prosposition 7.6]{DGHKT}).

\begin{prop}\label{p101}
For any $A\in\mathbb{N}^{<\infty}$ with $|A| = m\geqslant 2$, there exist $\mathbf C_1, \mathbf C_2 > 0$ such that if $p\in [2, \infty)$,
$$\mathbf C_1 m^{1/p}\ \leqslant\ \left\|\sum_{n\in A}h_n\right\| \ \leqslant\ \mathbf C_2 (\ln m)^{h(p, d)}m^{1/p},$$
while if $p\in (1, 2)$,
$$\mathbf C_1 m^{1/p}(\ln m)^{-h(p, d)}\ \leqslant\ \left\|\sum_{n\in A}h_n\right\|\ \leqslant\ \mathbf C_2 m^{1/p},$$
where $h(p, d):= (d-1)\left|\frac{1}{2}-\frac{1}{p}\right|$.
\end{prop}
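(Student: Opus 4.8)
This is \cite{KPT} (see also \cite[Proposition 7.6]{DGHKT}), and the plan is to explain the mechanism rather than reproduce every computation. First I would normalize the basis so that $\|h_n\|_p\asymp 1$, and write $h_n=h_R$ for the dyadic rectangle $R=I_1\times\cdots\times I_d\subset[0,1]^d$ on which $h_n$ is supported, so that $|h_R|\asymp|R|^{-1/p}\mathbf 1_R$. Since $\mathcal H^d_p$ is unconditional in $L^p$ for $1<p<\infty$, iterating the one-dimensional square-function (Littlewood--Paley) inequality for the Haar system once in each coordinate gives, with constants depending only on $p$ and $d$,
\[
\Big\|\sum_{n\in A}h_n\Big\|_p\ \asymp\ \Big\|\Big(\sum_{R\in A}|h_R|^2\Big)^{1/2}\Big\|_p\ \asymp\ \Big\|\Big(\sum_{R\in A}|R|^{-2/p}\mathbf 1_R\Big)^{1/2}\Big\|_p\ =:\ \|F_A\|_p .
\]
So it suffices to estimate $\|F_A\|_p$ for an arbitrary $A$ with $|A|=m$. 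Moreover, the Haar systems of $L^p$ and $L^{p'}$ are biorthogonal and both unconditional, so the pairing bound $\|1_A\|_{L^p}\cdot\|1_A\|_{L^{p'}}\asymp m$ lets one pass from the case $p\ge 2$ to the case $1<p<2$; I would therefore treat only $p\ge 2$.

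Assume $p\ge 2$, so that $p/2\ge 1$. The lower bound is then immediate from the superadditivity of $t\mapsto t^{p/2}$: pointwise $\big(\sum_R|R|^{-2/p}\mathbf 1_R\big)^{p/2}\ge\sum_R|R|^{-1}\mathbf 1_R$, and integrating gives $\|F_A\|_p^p\ge\sum_{R\in A}|R|^{-1}\,|R|=m$, i.e.\ $\|\sum_{n\in A}h_n\|_p\ge\mathbf C_1 m^{1/p}$. (Dually, for $1<p\le 2$ the reverse pointwise inequality gives the log-free upper bound $\|\sum_{n\in A}h_n\|_p\le\mathbf C_2 m^{1/p}$.)

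The upper bound $\|F_A\|_p\le\mathbf C_2 m^{1/p}(\ln m)^{h(p,d)}$ for $p\ge 2$ is the substantive part, and here I would exploit the two ``directions'' of the product structure. Group the rectangles of $A$ by their generation vector $\kappa=(k_1,\dots,k_d)$, where $|I_j|=2^{-k_j}$: rectangles with a fixed $\kappa$ tile $[0,1]^d$ and are therefore disjoint, the number of $\kappa$ with $|\kappa|:=\sum_j k_j=\ell$ is $\binom{\ell+d-1}{d-1}\asymp\ell^{d-1}$, and hence there are at most $\asymp\ell^{d-1}2^{\ell}$ rectangles of volume $2^{-\ell}$ in all. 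An illuminating first computation is the $L^2$ bound: by orthogonality $\|\sum_{n\in A}h_n\|_2^2\asymp\sum_{R\in A}|R|^{1-2/p}$, and optimizing the right side under $|A|=m$ with the capacity constraint $|A_{(\ell)}|\lesssim\ell^{d-1}2^{\ell}$ (so the relevant scales $\ell$ range over $O(\ln m)$ values) already produces $\|\sum_{n\in A}h_n\|_2\lesssim m^{1/p}(\ln m)^{(d-1)(1/2-1/p)}$, the conjectured power. To upgrade this to an $L^p$ bound I would apply the $L^{p/2}$ triangle inequality scale by scale to $F_A^2=\sum_\ell\sum_{R\in A,\,|R|=2^{-\ell}}|h_R|^2$, estimate each scale by interpolating the pointwise overlap bound $\sum_{|R|=2^{-\ell}}\mathbf 1_R\le c\,\ell^{d-1}$ against its $L^1$ mass $|A_{(\ell)}|2^{-\ell}$, and sum; carrying out the optimization produces the claimed factor $(\ln m)^{h(p,d)}$.

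The hard part, and the reason I would ultimately cite \cite{KPT} and \cite[Proposition 7.6]{DGHKT} rather than write out the whole argument, is making all of this \emph{uniform over every $A$ with $|A|=m$} and with the \emph{sharp} exponent $(d-1)|1/2-1/p|$: a single $A$ can combine ``many shapes at one scale'', which is exactly the configuration that forces the logarithmic loss, with ``one shape across many scales'', which behaves coordinate by coordinate like the one-dimensional greedy Haar system and loses nothing; separating these two regimes so that $\ln m$ never enters with a larger power is the real content of the estimate. I would therefore present the proposition as quoted, reproducing in detail only the square-function reduction and the elementary $L^{p/2}$-convexity bounds.
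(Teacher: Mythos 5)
The paper offers no proof of this proposition at all---it is imported verbatim from \cite{KPT} (see also \cite[Proposition 7.6]{DGHKT})---so your decision to cite the same sources for the sharp logarithmic upper bound is exactly what the paper does. The parts you do work out (the square-function reduction, the lower bound for $p\geqslant 2$ and the log-free upper bound for $p\in(1,2)$ via super-/sub-additivity of $t\mapsto t^{p/2}$, and the duality pairing $\langle \sum_{R\in A}h_R,\sum_{R\in A}h_R^{*}\rangle=m$ transferring the estimates between $p$ and $p'$) are correct, and your generation-vector counting heuristic does reproduce the exponent $h(p,d)$, so the sketch is a faithful account of the cited argument rather than a divergence from it.
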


\begin{thm}
For any $\varepsilon > 0$, let $f(x) = x^{1-\varepsilon}$. The $d$-multivariate system $\mathcal{H}_p^d = \{h_n\}$ is $f$-greedy.
\end{thm}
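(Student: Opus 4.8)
The plan is to apply Corollary \ref{c1}: since $\mathcal{H}_p^d$ is $1$-unconditional (hence quasi-greedy), it suffices to show that the system is $f$-democratic for $f(x) = x^{1-\varepsilon}$, i.e.\ that there is a constant $\mathbf C$ with $\|1_A\| \leqslant \mathbf C \|1_B\|$ whenever $|A| \leqslant |B|^{1-\varepsilon}$. (One should first observe that $f \in \mathcal{F}$: it is concave, increasing, continuous, differentiable on $(0,\infty)$, vanishes at $0$, and $f(1) = 1$.) The estimates on $\|1_A\|$ come directly from Proposition \ref{p101}, so the whole argument reduces to an elementary comparison of the upper bound for $\|1_A\|$ against the lower bound for $\|1_B\|$, using the gap between $|A|$ and $|B|$.

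I would split into the two ranges of $p$. For $p \in [2,\infty)$, write $a = |A|$, $b = |B|$ with $a \leqslant b^{1-\varepsilon}$. Proposition \ref{p101} gives $\|1_A\| \leqslant \mathbf C_2 (\ln a)^{h(p,d)} a^{1/p}$ and $\|1_B\| \geqslant \mathbf C_1 b^{1/p}$ (for $a,b \geqslant 2$; the finitely many cases $a \leqslant 1$ or $b \leqslant 1$ are handled by semi-normalization as in Corollary \ref{c4}). Then
$$\frac{\|1_A\|}{\|1_B\|} \ \leqslant\ \frac{\mathbf C_2}{\mathbf C_1} (\ln a)^{h(p,d)} \Big(\frac{a}{b}\Big)^{1/p} \ \leqslant\ \frac{\mathbf C_2}{\mathbf C_1} (\ln a)^{h(p,d)} b^{-\varepsilon/p},$$
and since $a \leqslant b$, we have $(\ln a)^{h(p,d)} b^{-\varepsilon/p} \leqslant (\ln b)^{h(p,d)} b^{-\varepsilon/p}$, which is bounded over $b \geqslant 2$ because a power of a logarithm is dominated by any positive power of $b$. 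Hence the ratio is bounded by an absolute constant. For $p \in (1,2)$, the roles of the logarithmic factors are reversed: $\|1_A\| \leqslant \mathbf C_2 a^{1/p}$ and $\|1_B\| \geqslant \mathbf C_1 b^{1/p} (\ln b)^{-h(p,d)}$, so
$$\frac{\|1_A\|}{\|1_B\|} \ \leqslant\ \frac{\mathbf C_2}{\mathbf C_1} (\ln b)^{h(p,d)} \Big(\frac{a}{b}\Big)^{1/p} \ \leqslant\ \frac{\mathbf C_2}{\mathbf C_1} (\ln b)^{h(p,d)} b^{-\varepsilon/p},$$
which is again bounded over $b \geqslant 2$ by the same logarithm-versus-power comparison. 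In both cases the bound depends only on $p, d, \varepsilon$ and the constants $\mathbf C_1, \mathbf C_2$, not on $A$ or $B$.

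This establishes $f$-democracy, and then Corollary \ref{c1} (the version with the unconditionality hypothesis in place of quasi-greediness, or just using that $1$-unconditional implies quasi-greedy) yields that $\mathcal{H}_p^d$ is $f$-greedy. I do not expect any genuine obstacle here: the only mildly delicate points are bookkeeping of the small-cardinality cases (where $\ln$ is not available and one falls back on $\|1_A\| \leqslant |A| \mathbf c_2$ and $\|1_B\| \geqslant \mathbf c_1$, exactly as in the proof of Corollary \ref{c4}) and making explicit that $\sup_{t \geqslant 2} (\ln t)^{h(p,d)} t^{-\varepsilon/p} < \infty$. Everything else is a direct invocation of Proposition \ref{p101} and Corollary \ref{c1}.
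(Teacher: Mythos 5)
Your proposal is correct and follows essentially the same route as the paper: reduce to $f$-democracy via Corollary \ref{c1}, invoke Proposition \ref{p101}, and beat the logarithmic factor with the power $|B|^{\varepsilon/p}$ gained from $|A|\leqslant |B|^{1-\varepsilon}$, handling small cardinalities by semi-normalization. The only difference is cosmetic (you bound the ratio $\|1_A\|/\|1_B\|$ directly, while the paper fixes a threshold $N$ with $x^{\varepsilon/p}\geqslant \tfrac{\mathbf C_2}{\mathbf C_1}(\ln x)^{h(p,d)}$ for $x\geqslant N$).
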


\begin{proof}
By Corollary \ref{c1}, it suffices to show that $\mathcal{H}_p^d$ is $f$-democratic. Let $A, B\in \mathbb{N}^{<\infty}$ such that $|A|\leqslant |B|^{1-\varepsilon}$. Also let $\mathbf C_1, \mathbf C_2$ be as in Proposition \ref{p101}. Choose $N\geqslant 2$ to be sufficiently large such that
$$x^{\varepsilon/p}\ \geqslant\ \frac{\mathbf C_2}{\mathbf C_1}(\ln x)^{h(p, d)}, \forall x\geqslant N.$$
Without loss of generality, assume $|B|^{1-\varepsilon}\geqslant |A|\geqslant N$. (If $|A|\leqslant N$, we use semi-normalization of the basis).
We consider only the case $p\in [2, \infty)$ as the case $p\in (1, 2)$ is similar. We have 
\begin{align*}\left\|1_A\right\|\ \leqslant\ \mathbf C_2 (\ln |A|)^{h(p, d)}|A|^{1/p}&\ \leqslant\ \mathbf C_2(\ln |B|)^{h(p, d)}|B|^{(1-\varepsilon)/p}\\
&\ \leqslant\ (\mathbf C_1|B|^{\varepsilon/p})|B|^{(1-\varepsilon)/p}\ =\ \mathbf C_1|B|^{1/p}\ \leqslant\ \|1_B\|.
\end{align*}
Therefore, $\mathcal{H}_p^d$ is $f$-democratic, as desired. 
\end{proof}

\subsection{A conditional basis that is $x/\lambda$-unconditional for any $\lambda > 1$}\label{ex1c}
We borrow the conditional, QG basis $\mathcal{B}$ by Konyagin and Temlyakov \cite{KT1}, who considered the canonical basis $\mathcal{B}$ of the completion of $c_{00}$ with respect to the norm
$$\|(a_n)_{n=1}^\infty\|\ :=\ \max\left\{\left(\sum_{n}|a_n|^2\right)^{1/2}, \sup_{N}\left|\sum_{n=1}^N\frac{a_n}{\sqrt{n}}\right|\right\}.$$

Fix $\lambda > 1$ and let us show that the basis is $x/\lambda$-unconditional. By Proposition \ref{ps}, it suffices to show that $\mathcal{B}$ has Property ($x/\lambda$). Choose $x, A, B, \varepsilon$ as in Definition \ref{d20}. Then $|A|\leqslant |B|/(\lambda-1)$. Clearly, $\|x+1_{\varepsilon B}\|\geqslant |B|^{1/2}$. We have
\begin{align*}\|P_A(x)\|&\ =\ \max\left\{\left(\sum_{n\in A}|e_n^*(x)|^2\right)^{1/2}, \sup_{N}\left|\sum_{\substack{1\leqslant n\leqslant N\\n\in A}}\frac{e_n^*(x)}{\sqrt{n}}\right|\right\}\\
&\ \leqslant\ \max\left\{|A|^{1/2}, \sum_{n=1}^{|A|}\frac{1}{\sqrt{n}}\right\}\ \leqslant\ 2|A|^{1/2}\\
&\ \leqslant\ 2(\lambda-1)^{-1/2}|B|^{1/2}\ \leqslant\ 2(\lambda-1)^{-1/2}\|x+1_{\varepsilon B}\|.
\end{align*}
Hence, $\mathcal{B}$ is $x/\lambda$-unconditional, as claimed.

\subsection{A non-QG basis that has Property ($x/\lambda$) for any $\lambda > 1$}\label{ex1b}
We use the example in \cite[Section 5.3]{C3}.
Let us first construct the weight sequence $\omega = (w_n)_{n=1}^\infty$.
For $n\geqslant 1$, let $t_n = \frac{1}{\sqrt{n}}$, $L_n = e^{(\ln n)^2}$, and $a_n = \frac{1}{\sqrt{n}\ln (n+1)}$. Define an increasing sequence $(N_n)_{n=0}^\infty$ recursively. Set $N_0 = 0$. Choose $N_1>10$ to be the smallest such that 
$$b_1 \ :=\ a_1t_1\left(\sum_{n=1}^{N_1} t_n\right)^{-1} \ <\ \frac{a_1}{L_1}.$$
Once $N_j$ and $b_j$ are defined for $j\geqslant 1$, choose $N_{j+1}$ to be the smallest such that 
\begin{equation}\label{ee2}b_{j+1}\ :=\ a_{j+1}t_{j+1}\left(\sum_{n=N_j+1}^{N_j+ N_{j+1}} t_n\right)^{-1}\ <\ \min\left\{\frac{a_{j+1}}{L_{j+1}}, b_j\right\}\mbox{ and }\frac{N_{j+1}}{N_j} \ >\ 10.\end{equation}
For $j\geqslant 1$, denote the finite sequence $(t_{n})_{n= N_{j-1}+1}^{N_{j-1}+N_j} = (t_{N_{j-1}+1}, \ldots, t_{N_{j-1}+N_j})$ by $B_j$.

We now define a weight $\omega:=(w_n)_{n=1}^\infty$ on $\mathbb{N}$ as follows:
$$(w_n) \ =\ (t_1, B_1, t_2, B_2, t_3, B_4, t_4, \ldots).$$
In words, the weight $(w_n)$ is chosen such that the first one is $t_1$; the next $N_1$ weights are taken from $B_1$ in the same order; the next weight is $t_2$, then the next $N_2$ weights are taken from $B_2$ in the same order, and so on.

Let $X$ be the completion of $c_{00}$ under the following norm 
$$\left\|\sum_{n}x_n e_n\right\|\ =\ \max\left\{\sup_{N}\left|\sum_{n=N}^\infty w_nx_n\right|, \left(\sum_{n=1}^\infty |x_n|^2\right)^{1/2}\right\}.$$
For ease of notation, set 
$$\left\|\sum_{n}x_n e_n\right\|_1\: = \ \sup_{N}\left|\sum_{n=N}^\infty w_nx_n\right|\mbox{ and }\left\|\sum_{n}x_n e_n\right\|_2\ =\ \left(\sum_{n=1}^\infty |x_n|^2\right)^{1/2}.$$
Clearly, the standard unit vector basis $\mathcal B$ is Schauder and normalized in $(X, \|\cdot\|)$. As shown in \cite{C3}, the basis is not quasi-greedy. 

We shall show that it has Property ($x/\lambda$) for any $\lambda > 1$. Fix $\lambda > 1$ and let $x, A, B, \varepsilon$ be chosen as in Definition \ref{d20}. Then $|A|\leqslant (\lambda-1)^{-1}|B|$. We know that $\|x+1_{\varepsilon B}\|\geqslant |B|^{1/2}$. On the other hand,
$$\|P_A(x)\|\ \leqslant\ \max\left\{\sum_{\substack{n\geqslant 1\\n\in A}} w_n, |A|^{1/2}\right\}\ \leqslant\ \max\left\{2\sum_{n=1}^{|A|}\frac{1}{\sqrt{n}}, |A|^{1/2}\right\}\ \leqslant\ 4|A|^{1/2}.$$
Hence, 
$$\|P_A(x)\|\ \leqslant\ 4|A|^{1/2}\ \leqslant\ 4(\lambda-1)^{-1/2}|B|^{1/2}\ \leqslant\ 4(\lambda-1)^{-1/2}\|x+1_{\varepsilon B}\|.$$
We conclude that $\mathcal{B}$ has Property ($x/\lambda$).

\subsection{A quasi-greedy basis that does not have Property $(f)$ for any unbounded $f$}\label{ex1a}
Let $(x_n)_{n=1}^\infty$ be a Schauder, conditional, QG basis of a Banach space $X$ such that $\|x\|_{X}\leqslant 1$ implies that $|x_n^*(x)|\leqslant 1$ for all $n$. We know such a basis exists due to the Konyagin and Temlyakov's example mentioned in Example \ref{ex1c}. Equip $Z = X \oplus c_0$ with the norm 
$$\|(x, v)\|_{Z} = \max\{\|x\|_{X}, \|v\|_{c_0}\}, \forall x\in X, \forall v\in c_0.$$
Let $\mathcal{Z} = (z_n)_{n=1}^\infty$ be the basis of $Z$ defined as
$$z_{n} \ =\ \begin{cases}(x_{k}, 0)&\mbox{ if }n = 2k-1,\\ (0, e_{k}) &\mbox{ if }n = 2k,\end{cases}$$
where $(e_n)_{n=1}^\infty$ is the canonical basis of $c_0$. Clearly, $\mathcal{Z}$ is QG because the $c_0$-direct sum of two QG bases is again QG. 
However, the basis $\mathcal{Z}$ is not $f$-unconditional for any unbounded $f\in\mathcal{G}$. Indeed, pick $L > 1$.
Since $(x_n)_{n=1}^\infty$ is conditional, there exists a finitely supported vector $x = \sum_{n=1}^N a_nx_n$ with $\|x\|_{X} = 1$ (and thus, $|a_n|\leqslant 1$) and a finite set $A\subset\supp(x)$ such that 
$\|P_A(x)\|_{X}\ >\ L$. Choose $m$ such that $f(m) > |A|$. Consider the following vector in $Z$
$$z\ :=\ \sum_{j=1}^N a_{j}z_{2j-1} + \sum_{n = 1}^m z_{2n}.$$
Let $A' = \{2j-1: j\in A\}$ and $B = \{2, 4, \ldots, 2m\}$. Observe that $B$ is a greedy set of $z$ and 
$$|A'\cup B| - f(|A'\cup B|)\ \leqslant\ |A| + |B| - f(m)\ <\ |B|.$$
However, 
$$\|P_{A'\cup B}(z)\|_{Z}\ =\ \|P_A(x)\|_{X}\ >\ L\ =\ L\|z\|_{Z}.$$
Since $L > 1$ is arbitrary, our basis $\mathcal{Z}$ is not $f$-unconditional.
%%%%%%%%%%%%%%%%%%%%%%%%%%%%%%%%%%%%%%%%%%%%%%%%%%%%%%%%%%%%%%%%%%%%%%%%%%%%%%%%%%%%%%%%%%%%%%%%%%%%%%%%%%%%%%%%%%%%%%%%%%%%%%%%%%%%%%%%%%%%%%%%%%%%%%%%%%%%%%%%%%%%%%%%%%%%%%%%%%%%%%%%%%%%%%%%%%%%%%%%%%%%%%%%%%%%%%%%%%%%%%%%%%%%%%%%%%%%%%%%%%%%%%%%%%%%%%%%%%%%%%%%%%%%%%%%%%%%%%%%%%%%%%%%%%%%%%%%%%%%%%%%%%%%%%%%%%%%%%%%%%%%%%%%%%%%%%%%%%%%%%%%%%%%%%%%%%%%%%%%%%%%%%%%%%%%%%%%%%%%%%%%%%%%%%%%%%%%%%%%%%%%%%%%%%%%%%%%%%%%%%%%%%%%%%%%%%%%%%%%%%%%%%%%%%%%%%%%%%%%%%%%%%%%%%%%%%%%%%%%%%%%%%%%%%%%%%%%%%%%%%%%%%%%%%%%%%%%%%%%%%%%%%%%%%%%%%%%%%%%%%%%

%%%%%%%%%%%%%%%%%%%%%%%%%%%%%%%%%%%%%%%%%%%%%%%%%%%%%%%%%%%%%%%%%%%%%%%%%%%%%%%%%%%%%%%%%%%%%%%%%%%%%%%%%%%%%%%%%%%%%%%%%%%%%%%%%%%%%%%%%%%%%%%%%%%%%%%%%%%%%%%%%%%%%%%%%%%%%%%%%%%%%%%%%%%%%%%%%%%%%%%%%%%%%%%%%%%%%%%%%%%%%%%%%%%%%%%%%%%%%%%%%%%%%%%%%%%%%%%%%%%%%%%%%%%%%%%%%%%%%%%%%%%%%%%%%%%%%%%%%%%%%%%%%%%%%%%%%%%%%%%%%%%%%%%%%%%%%%%%%%%%%%%%%%%%%%%%%%%%%%%%%%%%%%%%%%%%%%%%%%%%%%%%%%%%%%%%%%%%%%%%%%%%%%%%%%%%%%%%%%%%%%%%%%%%%%%%%%%%%%%%%%%%%%%%%%%%%%%%%%%%%%%%%%%%%%%%%%%%%%%%%%%%%%%%%%%%%%%%%%%%%%%%%%%%%%%%%%%%%%%%%%%%%%%%%%%%%%%%%%%%%%%%

\ \\
\end{document}